\providecommand{\U}[1]{\protect\rule{.1in}{.1in}}
\newtheorem*{mainproblem}{Problem}
\newtheorem*{maintheorem}{Main Theorem}
\newtheorem*{pretheorem*}{Preliminary Version of Main Theorem}
\newtheorem*{proposition*}{Proposition}
\newtheorem*{definition*}{Definition}
\newtheorem*{theorem*}{Theorem}
\newtheorem{corollary}{Corollary}
\newtheorem{definition}{Definition}
\newtheorem{example}{Example}
\newtheorem{lemma}{Lemma}
\newtheorem{proposition}{Proposition}
\newtheorem{remark}{Remark}
\newcommand{\Lie}{\mathscr{L}}
\newcommand{\dd}{\mathrm{d}}
\newcommand{\bs}{\mathbf{s}}
\newcommand{\bt}{\mathbf{t}}
\newcommand{\im}{\operatorname{im}}
\newcommand{\R}{\mathbb{R}}
\newcommand{\diffto}{\xrightarrow{\raisebox{-0.2 em}[0pt][0pt]{\smash{\ensuremath{\sim}}}}}
\newcommand{\rmap}{\longrightarrow}
\newcommand{\lmap}{\longleftarrow}
\newcommand{\hmap}{\ensuremath{\lhook\joinrel\relbar\joinrel\rightarrow}}
\newcommand{\TT}{\mathbb{T}}
\newcommand{\la}{\langle}
\newcommand{\ra}{\rangle}
\newcommand{\pr}{\mathrm{pr}}
\begin{document}
\title{On dual pairs in Dirac geometry}
\author{Pedro Frejlich}
\address{Universidade Federal do Rio Grande do Sul, Campus Litoral Norte\\ Rodovia RS 030, 11.700 -- Km 92, Emboaba -- Tramanda\'i, RS, CEP 95590-000}
\email{frejlich.math@gmail.com}
\author{Ioan M\u{a}rcu\cb{t}}
\address{IMAPP, Radboud University Nijmegen, 6500 GL, Nijmegen, The Netherlands}
\email{i.marcut@math.ru.nl}
\begin{abstract}
In this note we discuss (weak) dual pairs in Dirac geometry. We show that this notion appears naturally when studying the problem of pushing forward a Dirac structure along a surjective submersion, and we prove a Dirac-theoretic version of Libermann's theorem from Poisson geometry. Our main result is an explicit construction of self-dual pairs for Dirac structures. This theorem not only recovers the global construction of symplectic realizations from \cite{CrMar11}, but allows for a more conceptual understanding of it, yielding a simpler and more natural proof. As an application of the main theorem, we present a different approach to the recent normal form theorem around Dirac transversals from \cite{BLM}.
\end{abstract}

\maketitle

\noindent
{\bf Keywords:} Dirac Structure, Poisson Geometry, Symplectic Realization, Dual Pairs, Symplectic Reduction, Lie Groupoids, Normal Forms\\

\noindent
{\bf Mathematics Subject Classification:} Primary 53D17,
Secondary 53D20,
22A22

\subsection*{Acknowledgments}

At a late stage of this project, we learned that E. Meinrenken was aware that the argument we present in the proof of our main theorem would work. We wish to thank him for his interest and kind encouragement, as well as many interesting discussions. We also wish the thank the referee for the detailed comments and suggestions, which greatly improved the quality of the paper.

The first author was supported by the Nederlandse Organisatie voor Wetenschappelijk Onderzoek (Vrije Competitie grant ''Flexibility and Rigidity of Geometric Structures'' 612.001.101) and by IMPA (CAPES-FORTAL project) and the second author by the Nederlandse Organisatie voor Wetenschappelijk Onderzoek (Veni grant 613.009.031) and the National Science Foundation (grant DMS 14-05671).

\tableofcontents
\newpage

\section{Introduction}

A \emph{symplectic realization}
\[
 \bs: (\Sigma,\omega^{-1}) \rmap (M,\pi)
\]
of a Poisson manifold $(M,\pi)$ is a surjective and submersive Poisson map $\bs$ from a symplectic manifold $(\Sigma,\omega)$\footnote{In the literature \cite{BursztynRadko,BCWZ,Weinstein83}, a \emph{symplectic realization} in our sense, in which the Poisson map is required to be submersive, is called a \emph{full symplectic realization}.}. The importance of symplectic realizations had been manifest since the early days of Poisson geometry (see \cite{Karasev,Weinstein83,CDW}).

Our main result concerns one of the possible Dirac-theoretic\footnote{Since the inception of Dirac geometry in the work of T.~Courant \cite{Courant}, many people worked on the field, and there are several good sources available. For background material on Dirac structures particularly close in spirit to the present note, we refer the reader to e.g.\ \cite{ABM,H,Marco,Eckhardnotes}. For the specific conventions and notations used in this paper, we refer the reader to Section \ref{sec : conventions}.} incarnations of the notion of symplectic realizations. Namely, we consider the following:
\begin{mainproblem}
Given a Dirac structure $L \subset TM\oplus T^*M$ on a manifold $M$, find a surjective submersion $\bs:\Sigma\to M$ and a closed two-form $\omega$ on $\Sigma$ such that
\[
 \bs: (\Sigma,\mathrm{Gr}(\omega)) \rmap (M,L),
\]
is a forward Dirac map (where $\mathrm{Gr}(\omega)$ denotes the graph of $\omega$).
\end{mainproblem}

In the Poisson case, a global, direct proof of the existence of symplectic realizations was presented in \cite{CrMar11}. Our result is a natural extension of this construction to the Dirac setting. Moreover, our construction produces a \emph{dual pair}, which is one of the main notions introduced in this paper, which generalizes to the Dirac setting the classical notion of dual pair in Poisson geometry:
\begin{definition*}
A {\bf dual pair} consists of surjective, forward Dirac submersions
\[(M_0,L_0)\stackrel{\bs}{\lmap} (\Sigma,\mathrm{Gr}(\omega)) \stackrel{\bt}{\rmap} (M_1,-L_{1}),\]
where $\omega$ is a closed two-form on $\Sigma$, such that the following hold:
\[\omega(V,W)=0,\quad
V\cap \ker\omega \cap W=0,\ \quad
\mathrm{dim}(\Sigma)=\mathrm{dim}(M_0)+\mathrm{dim}(M_1),\]
where $V:=\ker\bs_*$ and $W:=\ker \bt_*$.
\end{definition*}

The prototypical example of a dual pair is given by the presymplectic groupoid of an integrable Dirac structure \cite{BCWZ}.

We present the following solution to the Problem above (for a full statement of the result, including the explicit construction, see Section \ref{sec : On the existence of self-dual pairs}):

\begin{maintheorem}\label{thm : Dirac dual pairs}
Any Dirac manifold $(M,L)$ fits into a self-dual pair
\[(M,L) \stackrel{\bs}{\lmap} (\Sigma,\mathrm{Gr}(\omega)) \stackrel{\bt}{\rmap} (M,-L).\]
\end{maintheorem}

Here is an outline of the main results of the paper, and of its organization:
\vspace*{0.2cm}

\noindent \underline{Section \ref{sec : conventions}} establishes the notation and conventions used in the paper. Throughout, the symbol \S \ refers to this Section.
\vspace*{0.2cm}

\noindent \underline{Section \ref{sec : Pushing forward Dirac structures}} discusses in detail the problem of pushing forward a Dirac structure $L$ on $\Sigma$ to a Dirac structure on $M$ via a surjective submersion $\bs: \Sigma\to M$ with connected fibres. The prototypical statement is Libermann's theorem \cite{Libermann}, which states that a non-degenerate Poisson structure $\omega^{-1}\in \mathfrak{X}^2(\Sigma)$, corresponding to a symplectic structure $\omega \in \Omega^2(\Sigma)$, can be pushed forward through $\bs$ to a Poisson structure on $M$ if and only if the symplectic orthogonal to the vertical foliation $\ker \bs_*$ is involutive. We prove a general Dirac version of Libermann's theorem:
\begin{proposition*}[Dirac-Libermann]
A Dirac structure $L$ on $\Sigma$ can be pushed forward via $\bs$ to a Dirac structure on $M$ if and only if the Lagrangian family (see \S 5)\[p \mapsto L^{\bs}_p:=\bs^!(\bs_!(L_p))\subset T_p\Sigma\oplus T_p^*\Sigma\]is a Dirac structure on $\Sigma$ -- i.e.\ iff it forms a smooth, involutive subbundle of $\TT \Sigma$.
\end{proposition*}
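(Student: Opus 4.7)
The plan hinges on two pointwise observations about $L^{\bs}$. First, because $\bs$ is a submersion, $\bs_!\circ\bs^! = \mathrm{id}$ on Lagrangians at a point, so $\bs_!(L^{\bs}_p)=\bs_!(L_p)$: both $L$ and $L^{\bs}$ have the same fibrewise push-forward, and one may freely replace $L$ by $L^{\bs}$ in questions about pushing forward. Second, since $(0,0)\in\bs_!(L_p)$, the vertical bundle $V:=\ker\bs_*$ sits inside $L^{\bs}$ as $V\oplus 0$; concretely, chasing the definitions yields
\[
L^{\bs}_p \,=\, \bigl\{(X+Y,\alpha) : (X,\alpha)\in L_p,\ \alpha\in \operatorname{Ann}(V_p),\ Y\in V_p\bigr\},
\]
so $L^{\bs}$ is the $V$-saturation of $L\cap(T\Sigma\oplus\operatorname{Ann}(V))$.

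The forward direction is then immediate: if $\bs_!L$ defines a Dirac structure $L_M$ on $M$, then $L^{\bs}=\bs^!(L_M)$ is the fibrewise pullback of a Dirac structure along a submersion, and this is a Dirac structure on $\Sigma$ by the standard pullback result (\S).

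For the converse, assume $L^{\bs}$ is a smooth, involutive Lagrangian subbundle. The crux is to show that $\bs_!(L_p)$ depends only on $\bs(p)$. Since $V\oplus 0\subset L^{\bs}$ and $L^{\bs}$ is Courant involutive, every $Y\in\Gamma(V)$ acts on sections of $L^{\bs}$ via
\[
[\![(Y,0),(X,\alpha)]\!] \,=\, \bigl([Y,X],\,\mathcal{L}_Y\alpha\bigr) \,\in\, \Gamma(L^{\bs}),
\]
so the flow of $Y$ preserves $L^{\bs}$ as a subbundle of $\mathbb{T}\Sigma$. Connectedness of the fibres of $\bs$ then yields $\bs_!(L^{\bs}_p)=\bs_!(L^{\bs}_{p'})$ whenever $\bs(p)=\bs(p')$, and combined with $\bs_!\bs^!=\mathrm{id}$ this shows $\bs_!(L_p)$ depends only on $\bs(p)$, defining a Lagrangian family $L_M$ on $M$. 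Smoothness of $L_M$ follows by choosing a local section $\sigma:U\to\Sigma$ of $\bs$ near each point of $M$ and pushing a smooth local frame of $L^{\bs}$ down via $\bs_!\circ\sigma$; the map $\bs_!: L^{\bs}_p\to(L_M)_{\bs(p)}$ is surjective with kernel $V\oplus 0$, so dimension count ensures a smooth frame of $L_M$.

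The last step is Courant involutivity of $L_M$, and here I expect the bulk of the bookkeeping. Given local sections $\tau_i=(Y_i,\eta_i)$ of $L_M$, I would lift each $Y_i$ to an $X_i\in\Gamma(T\Sigma)$ with $\bs_*X_i=Y_i\circ\bs$, so that $(X_i,\bs^*\eta_i)$ is a section of $L^{\bs}$. The $\bs$-relatedness of $X_i$ and $Y_i$ gives
\[
[\![(X_1,\bs^*\eta_1),(X_2,\bs^*\eta_2)]\!] \,=\, \bigl([X_1,X_2],\,\bs^*(\mathcal{L}_{Y_1}\eta_2 - \iota_{Y_2}\dd\eta_1)\bigr),
\]
which lies in $\Gamma(L^{\bs})$ by involutivity, and whose $\bs_!$-image is precisely $[\![\tau_1,\tau_2]\!]$, now exhibited as a section of $L_M$. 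The vector lifts are only defined up to $V$, but that ambiguity is absorbed exactly by the subbundle $V\oplus 0\subset L^{\bs}$, closing the argument.
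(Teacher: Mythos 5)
Your proposal is correct and follows essentially the same route as the paper: the forward direction via pullback of Dirac structures along submersions, and the converse by observing $V\subset L^{\bs}$ and $\bs_!(L^{\bs}_p)=\bs_!(L_p)$, using flows of vertical vector fields plus connectedness of fibres to define $L_M$, and local sections of $\bs$ for smoothness. The only cosmetic difference is in the final step, where the paper gets smoothness and involutivity of $L_M$ at once by identifying $L_M|_U$ with $\sigma^!(L^{\bs})$ for a transverse local section $\sigma$, whereas you handle involutivity separately by lifting sections and invoking $\bs$-relatedness of Dorfman brackets; both are standard and correct.
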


Several criteria are discussed regarding smoothness and involutivity of $L^{\bs}$.
\vspace*{0.2cm}

\noindent \underline{Section \ref{sec : weak dual pairs}} first discusses the above proposition in the case when $L = \mathrm{Gr}(\omega)$ is the graph of a closed two-form $\omega \in \Omega^2(\Sigma)$. In particular, we show that a sufficient condition for a surjective submersion with connected fibres $\bs : \Sigma \to M$ to push forward a closed two-form $\omega \in \Omega^2(\Sigma)$ is the existence of an involutive subbundle $W \subset T\Sigma$ such that $L^{\bs}=V+\mathcal{R}_{\omega}(W)$, where $V:=\ker \bs_*$ and $\mathcal{R}_{\omega}(W)$ stands for the gauge-transformation of $W$ under $\omega$ (see \S 10). This leads naturally to the notions of \emph{weak dual pairs} and \emph{dual pairs} of Dirac structures, which we discuss in this section. Next, we give several equivalent descriptions of these notions, and present several illustrative examples.

\vspace*{0.2cm}

\noindent \underline{Section \ref{sec : properties}} discusses natural operations on weak dual pairs:
\begin{enumerate}[]
 \item\emph{composition} of two weak dual pairs with one common leg,
 \item\emph{pullback} of a weak dual pair through a surjective submersion,
 \item\emph{reduction} of a weak dual pair to a dual pair,
 \item\emph{pullback} of a (weak) dual pair via transverse maps,
\end{enumerate}
the latter to be used in the proof of the Main Theorem.

\vspace*{0.2cm}

\noindent \underline{Section \ref{sec : On the existence of self-dual pairs}} presents the complete statement and the proof  of the Main Theorem. This result generalizes the construction of symplectic realization for Poisson structures from \cite{CrMar11}; however, the proof presented here is completely conceptual, and bypasses all the unilluminating calculations in \emph{loc. cit.}, relying on natural Dirac geometric operators (like pullback, gauge transformations) which are not generally available in the Poisson setting.

\vspace*{0.2cm}

\noindent \underline{Section \ref{sec : dirac transversals}} provides, as an application of the Main Theorem, an alternative proof of the normal form theorem around \emph{Dirac transversals} from \cite{BLM}. In fact, the initial motivation for undertaking this project had been to prove a Dirac version of the normal form theorem of \cite{PT1} by extending the techniques from the Poisson setting; the outcome is the result stated below. In \cite{BLM}, this theorem is obtained through a completely different approach, by using a result on linearization of vector fields around submanifolds, which applies to other geometric settings (such as Lie algebroids, singular foliations, generalized complex structures etc.).

A \emph{Dirac transversal} in a Dirac manifold $(M,L)$ is an embedded submanifold $i:X \hookrightarrow M$ which is transverse to every presymplectic leaf of $(M,L)$. Such submanifolds inherit a Dirac structure $i^!(L)$.

\begin{theorem*}[Normal form around Dirac transversals]
Let $i:X \hookrightarrow M$ be a Dirac transversal in a Dirac manifold $(M,L)$, and let $p:NX \to X$ denote its normal bundle. Then, up to a diffeomorphism extending $i$ and an exact gauge-transformation, $L$ and $p^!i^!(L)$ are isomorphic around $X$.
\end{theorem*}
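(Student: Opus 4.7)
The strategy is to transfer the normal form question from $(M,L)$ down to the presymplectic level via the self-dual pair furnished by the Main Theorem, where a Moser-type argument becomes available.

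First, apply the Main Theorem to produce a self-dual pair
\[(M,L)\stackrel{\bs}{\lmap}(\Sigma,\mathrm{Gr}(\omega))\stackrel{\bt}{\rmap}(M,-L).\]
Since $X$ is Dirac-transverse and $\bt$ is a forward Dirac surjective submersion onto $(M,-L)$, the inclusion $i$ is transverse to $\bt$, so $P:=\bt^{-1}(X)$ is an embedded submanifold of $\Sigma$. The ``pullback of a dual pair via transverse maps'' developed in Section~\ref{sec : properties} then exhibits $P$ as the total space of a weak dual pair
\[(M,L)\stackrel{\bs|_P}{\lmap}(P,\omega|_P)\stackrel{\bt|_P}{\rmap}(X,-i^!L),\]
realizing $L$, on a neighborhood of $X$ in $M$, as the $\bs$-pushforward of $\mathrm{Gr}(\omega|_P)$.

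Next, carry out an analogous construction for the model Dirac structure. Apply the Main Theorem to $(X,i^!L)$ to obtain a self-dual pair $(\Sigma_X,\omega_X)$, and then pull it back along $p:NX\to X$ (again using the operations of Section~\ref{sec : properties}) to produce a weak dual pair
\[(NX,p^!i^!L)\stackrel{\bs'}{\lmap}(P',\omega')\stackrel{\bt'}{\rmap}(X,-i^!L),\]
whose right leg matches that of the previous step and whose total space has the same dimension $\dim(M)+\dim(X)$ as $P$.

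Now compare the two weak dual pairs. Over $X$, both $P$ and $P'$ contain canonical copies of $\Sigma_X$ --- the ``unit-like'' submanifolds coming from the explicit construction in the Main Theorem --- and by Dirac-transversality the normal directions in $P$ and in $P'$ along $\Sigma_X$ are both modelled on the bundle $NX$. An equivariant tubular neighborhood theorem thus provides a fibre-preserving diffeomorphism $\Phi$ between neighborhoods of $\Sigma_X$ in $P$ and in $P'$, restricting to the identity on $\Sigma_X$ and intertwining the right legs. A Moser argument applied to the interpolation $(1-t)\omega|_P+t\Phi^*\omega'$, with the relative Poincar\'e lemma supplying a primitive $\alpha$ of $\Phi^*\omega'-\omega|_P$ vanishing along $\Sigma_X$, then adjusts $\Phi$ so that it pulls $\omega'$ back to $\omega|_P+dB$ for a closed 2-form $B$ with $B|_{\Sigma_X}=0$.

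Finally, push $\Phi$ and $B$ forward through the common left legs $\bs|_P$ and $\bs'$ to obtain the desired diffeomorphism of neighborhoods of $X$ in $M$ and in $NX$ (extending $i$), together with the exact gauge transformation --- defined by the descent of $B$ --- relating $L$ to $p^!i^!L$. The main obstacle will be the comparison step: verifying that the Moser primitive descends along the $\bs$-legs to a genuine 1-form on $M$ (respectively $NX$), so that the resulting gauge transformation is honestly exact at the Dirac level. Dirac-transversality is precisely what aligns the transverse geometries of $P$ and $P'$, while the dimension condition in the definition of a self-dual pair is what allows the $\bs$-pushforward to turn presymplectic data into Dirac data.
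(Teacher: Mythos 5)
Your overall strategy --- transferring the problem to the level of presymplectic realizations via the Main Theorem --- is the right instinct, and the first step (pulling the self-dual pair back along the transversal to get a dual pair with legs $(M,L)$ and $(X,-i^!L)$) coincides with what the paper does. But the comparison step, which you correctly identify as ``the main obstacle,'' is where the argument breaks down, and the missing idea is precisely what makes the theorem work. First, the Moser argument is not available as stated: $\omega|_P$ and $\omega'$ are degenerate two-forms (for a dual pair one only has $V\cap K\cap W=0$, not $K=0$), so the equation $\iota_{X_t}\omega_t=-\alpha$ defining the Moser vector field need not be solvable, and even when it is, the solution is not unique. Second, and more seriously, for the final descent you need the comparison diffeomorphism $\Phi$ \emph{and} the Moser flow to map $\bs|_P$-fibres to $\bs'$-fibres, and you need the resulting gauge two-form to be \emph{basic} for the left legs --- a two-form cannot be ``pushed forward through'' a submersion otherwise. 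None of this is arranged by a tubular neighborhood theorem or by the relative Poincar\'e lemma, and there is no reason a generic primitive of $\Phi^*\omega'-\omega|_P$ would be basic. So the proposal, as it stands, does not close.

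The paper's proof shows that the second realization, the tubular neighborhood comparison, and the Moser argument are all unnecessary. One works with the \emph{single} dual pair $(X,i^!(L))\stackrel{\bs_0}{\lmap}(\Sigma_0,\mathrm{Gr}(\omega_0))\stackrel{\bt_0}{\rmap}(M,-L)$ obtained by transverse pullback (Proposition \ref{prop: transverse_pullback}); here $\Sigma_0=\bs^{-1}(X)=L|_X$. Transversality gives the exact sequence $0\to i^!(L)\to L|_X\to NX\to 0$, and a splitting $\rho:NX\to L|_X=\Sigma_0$ embeds $NX$ into the total space of this dual pair as a \emph{section-like} submanifold satisfying $\bs_0\circ\rho=p$; by Lemma \ref{lem : flow linearized} the composition $\varphi:=\bt_0\circ\rho$ is an open embedding of a neighborhood of $X$ in $NX$ into $M$ extending $i$. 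The defining relation of a dual pair, $\bs_0^!(i^!(L))=\mathcal{R}_{\omega_0}(\bt_0^!(L))$ (Proposition \ref{pro : equivalent dual pair}), pulled back along $\rho$, immediately yields $p^!i^!(L)=\mathcal{R}_{\rho^*\omega_0}(\varphi^!(L))$, so the gauge form is $\alpha=-\rho^*\omega_0$ --- pulled back along the section $\rho$, not pushed down along a submersion --- and it is exact because $\omega$ is. The lesson is that the diffeomorphism and the gauge transformation are both read off algebraically from one dual pair and one choice of splitting; no deformation argument is needed.
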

We refer the reader to Section \ref{sec : dirac transversals} for the detailed statement.

\vspace*{0.2cm}

\noindent \underline{Section \ref{sec : further}} indicates connections with other results from the literature, and presents possible implications and extensions of our work. The origin of the formula from the Main Theorem is explained in terms of the path-space approach to integrability of Dirac structures \cite{BCWZ,CrFer03}; we explain that our Main Theorem can be used to find explicit models for local presymplectic groupoids; we give the necessary ingredients to extend our results to twisted Dirac structures; and finally, we discuss the relation between dual pairs and Morita equivalence in Poisson geometry.

\section{Conventions and notation}\label{sec : conventions}

\begin{enumerate}[\S 1. ]
\item The standard Courant algebroid of a smooth manifold $M$ is denoted by $\TT M$, and $\mathrm{pr}_T,\mathrm{pr}_{T^*}$ denote the canonical projections:
\[
\TT M:=TM \oplus T^*M, \quad TM \stackrel{\mathrm{pr}_T}{\lmap} \TT M \stackrel{\mathrm{pr}_{T^*}}{\rmap} T^*M, \quad u \mapsfrom \ u+\xi \ \mapsto \ \xi.
\]
\item $\mathbb{T}M$ comes equipped with a nondegenerate, symmetric bilinear pairing
 \begin{align*}
\langle\cdot,\cdot\rangle:\TT M \times \TT M \rmap \mathbb{R}, \quad \langle u+\xi,v+\eta\rangle:=\iota_v\xi+\iota_u\eta,
 \end{align*}and the \emph{Dorfman bracket} at the level of sections:
 \begin{align*}\label{eq : dorfman bracket}
[\cdot,\cdot] : \Gamma(\mathbb{T} M) \times \Gamma(\mathbb{T} M) \rmap \Gamma(\mathbb{T} M), \quad   [u+\xi,v+\eta]:=[u,v]+\Lie_u\eta - \iota_v\dd\xi.
 \end{align*}
 \item For a smooth map $\varphi : M_0 \to M_1$, the elements $a_0=u_0+\xi_0 \in \TT_{x_0}M_0$ and $a_1=u_1+\xi_1 \in \TT_{x_1}M_1$ are \emph{$\varphi$-related} \cite[Definition 2.12]{Eckhardnotes}, denoted $a_0 \sim_{\varphi} a_1$, if
\[\varphi(x_0) = x_1, \quad \varphi_*(u_0)=u_1, \quad \xi_0 = \varphi^*\xi_1.\]
The notion of being $\varphi$-related applies to sections $a_0 \in \Gamma(\TT M_0)$ and $a_1 \in \Gamma(\TT M_1)$ as well, meaning that $a_{0,x_0} \sim_{\varphi} a_{1,\varphi(x_0)}$ for every $x_0 \in M_0$. Note that if $a_0,b_0 \in \Gamma(\TT M_0)$ and $a_1,b_1 \in \Gamma(\TT M_1)$ are such that
\[
 a_0 \sim_{\varphi} a_1, \quad b_0 \sim_{\varphi} b_1,
\]then also $[a_0,b_0] \sim_{\varphi} [a_1,b_1]$ (see e.g. the proof of \cite[Proposition 2.13]{Eckhardnotes}).

\item A subset $E \subset \TT M$ which meets each fibre $\TT_xM$ in a linear subspace $E_x:=E \cap \TT_xM$ is called a \emph{linear family}. Note that no continuity is assumed for the assignment $x \mapsto E_x$. The $\langle\cdot,\cdot\rangle$-orthogonal of a linear family $E$ is denoted by $E^{\perp} \subset \TT M$. If $E \subset E^{\perp}$, then $E$ is called an \emph{isotropic family}.
\item A linear family satisfying $E=E^{\perp}$ is called a \emph{Lagrangian family}; equivalently, if each $E_x \subset (\TT_xM,\langle\cdot,\cdot\rangle)$ is a maximal isotropic subspace. The set of maximal isotropic linear subspaces of $\TT_xM$ is denoted by $\mathrm{Lag}(\TT_x M)$.
\item A linear family $E \subset \TT M$ which is a smooth subbundle of $\TT M$ is  called \emph{smooth}; if in addition $E$ is Lagrangian, then it is called a \emph{Lagrangian subbundle}.

The graph of a two-form $\omega \in \Omega^2(M)$ and the graph of a bivector field $\pi \in \mathfrak{X}^2(M)$ are the following Lagrangian subbundles, respectively,
\begin{align*}
 \mathrm{Gr}(\omega) := \{ u+\iota_u\omega \ | \ u \in TM\}, \quad \mathrm{Gr}(\pi) := \{ \pi^{\sharp}\xi+\xi \ | \ \xi \in T^*M\}.
\end{align*}

\item An arbitrary linear family $E \subset \TT M$ is called \emph{involutive} if $[a,b] \in \Gamma(E)$ whenever $a,b \in \Gamma(E)$.
\item A \emph{Dirac structure} is an involutive Lagrangian subbundle. For instance, the Lagrangian subbundle $\mathrm{Gr}(\omega)$ defined by a two-form $\omega \in \Omega^2(M)$ is a Dirac structure iff $\omega$ is closed, whereas the Lagrangian subbundle $\mathrm{Gr}(\pi)$ defined by a bivector $\pi \in \mathfrak{X}^2(M)$ is a Dirac structure iff $\pi$ is Poisson.

\item The \emph{Courant tensor} of a Lagrangian subbundle $L$ is defined by
\[\Upsilon \in \Gamma(\wedge^3L^*), \quad \Upsilon(a_1,a_2,a_3):=\la [a_1,a_2],a_3\ra,\]and it vanishes exactly when $L$ is a Dirac structure \cite[Proposition 2.3.3]{Courant}.
\item The following \emph{operations on Dirac structures} will be used:
\begin{enumerate}[]
\item \emph{rescaling} a Dirac structure $L \subset \TT M$ by a scalar $\lambda \neq 0$:
\[
\lambda L := \{ u+\lambda \xi \ | \ u+\xi \in L\};
\]in particular, $-L=(-1)L$ denotes the \emph{opposite} Dirac structure;
\item \emph{gauge-transformation} by a closed two-form $\omega \in \Omega^2(M)$:
\[
\mathcal{R}_{\omega}(L) := \{ u+\xi+\iota_u\omega \ | \ u+\xi \in L\};
\]
\item \emph{pullbacks through submersions}: the pullback of a Dirac structure $L \subset \TT M$ through a submersion $\bs : \Sigma \to M$ is given by
\[
\bs^!(L) \subset \TT\Sigma, \quad \bs^!(L)_p = \{ u+\bs^*(\xi) \in \TT_p\Sigma \ | \ \bs_*(u)+\xi \in L_{\bs(p)}\}.
\]
\end{enumerate}
\item A smooth map $\varphi : M_0 \to M_1$ gives rise to pointwise operations of \emph{pullback} and \emph{push-forward}:
\begin{align*}
&\varphi^!: \mathrm{Lag}(\TT_{\varphi(x_0)} M_1) \rmap \mathrm{Lag}(\TT_{x_0} M_0), \quad \varphi^!(L_{1,x_1}):=\{a_0 \ | \ a_0 \sim_{\varphi} a_1 \in L_{1,\varphi(x_0)}\}\\
&\varphi_!: \mathrm{Lag}(\TT_{x_0} M_0) \rmap \mathrm{Lag}(\TT_{\varphi(x_0)} M_1), \quad \varphi_!(L_{0,x_0}):=\{a_1 \ | \ L_{0,x_0} \ni a_0 \sim_{\varphi} a_1\}.
\end{align*}
\noindent A smooth map between Dirac manifolds $\varphi : (M_0,L_0) \to (M_1,L_1)$ is called
\begin{enumerate}[]
\item \emph{forward} if $\varphi_!(L_{0,x_0}) = L_{1,\varphi(x_0)}$ for all $x_0 \in M_0$;
\item \emph{backward} if $L_{0,x_0}=\varphi^!(L_{1,\varphi(x_0)})$ for all $x_0 \in M_0$.
\end{enumerate}
\item In general, for a smooth map $\varphi : M_0 \to M_1$ and a Lagrangian subbundle $L_1$ on $M_1$, $\varphi^!(L_1)$ need not be smooth. However (see e.g.\ \cite[Proposition 5.6]{H}):
\begin{enumerate}[(a)]
 \item A sufficient criterion ensuring smoothness of $\varphi^!(L_1)$ is that $L_1 \cap \ker(\varphi^*)$ have constant rank;
 \item In particular, $\varphi^!(L_1)$ is smooth when $\varphi$ is \emph{transverse} to $L_1$, i.e., when either of the equivalent conditions holds:
 \begin{enumerate}[i)]
  \item $L_1 \cap \ker(\varphi^*) = 0$;
  \item for all $x_0 \in M_0$, we have that $\varphi_*(T_{x_0}M_0) + \mathrm{pr}_T(L_{1,\varphi(x_0)}) = T_{\varphi(x_0)}M_1$;
 \end{enumerate}
 \item When $L_1$ is a Dirac structure and $\varphi^!(L_1)$ is smooth, then it is automatically a Dirac structure.
\end{enumerate}
\item Let $A_1 \to M_1$ be a vector bundle, and let $\varphi : M_0 \to M_1$ be a smooth map. Denote by $\varphi^*(A_1):=M_0 \times_{M_1}A_1 \to M_0$ the pullback vector bundle. If $A_0 \to M_0$ is another vector bundle, a vector bundle map $\Phi : \varphi^*(A_1) \to A_0$ over $\mathrm{id}_{M_0}$ is called a \emph{comorphism of vector bundles}.

A comorphism of vector bundles $\Phi : \varphi^*(A_1) \to A_0$ induces a map on sections \[ \Phi^{\dagger} : \Gamma(A_1) \to \Gamma(A_0),\quad a_1\mapsto \Phi\circ a_1\circ \varphi.
\]

If $A_i \to M_i$ are Lie algebroids, with anchor maps $\varrho_i:A_i \to TM_i$, a comorphism of vector bundles $\Phi:\varphi^*(A_1) \to A_0$ is called a \emph{comorphism of Lie algebroids} if it is compatible with anchors, $\varphi_* \circ \varrho_0 \circ \Phi = \varrho_1$, and the induced map of sections $\Phi^{\dagger}$ is a Lie algebra homomorphism. A comorphism of Lie algebroids $\Phi$ is called \emph{complete} if the following condition is met: if $a_1 \in \Gamma(A_1)$ is such that $\varrho_1(a_1) \in \mathfrak{X}(M_1)$ is a complete vector field, then $\varrho_0(\Phi^{\dagger}a_1) \in \mathfrak{X}(M_0)$ is also a complete vector field. See e.g. \cite{Weins_comorph} for more details on comorphisms.
\end{enumerate}

\section{Pushing forward Dirac structures}\label{sec : Pushing forward Dirac structures}

In this section, we discuss the following problem: given a Dirac structure $L \subset \TT \Sigma$, when does a surjective submersion with connected fibres
\[\bs : \Sigma \rmap M\]push $L$ forward to a Dirac structure $L_M \subset \TT M$, i.e.\ when is there a Dirac structure $L_M$ for which $\bs : (\Sigma,L) \to (M,L_M)$ is a forward Dirac submersion?

First observe that, for a given $L_M \subset \TT M$, there exists a canonical Dirac structure on $\Sigma$ for which $\bs : (\Sigma,L) \to (M,L_M)$ is forward Dirac -- namely, the \emph{basic} Dirac structure $L=\bs^!(L_M)$:

\begin{definition}
Let $\bs : \Sigma \to M$ be a surjective submersion. A Dirac structure $L$ on $\Sigma$ is called {\bf basic} if there exists a Dirac structure $L_M$ on $M$ such that $L=\bs^!(L_M)$, i.e., for which
\[
 \bs : (\Sigma,L) \rmap (M,L_M)
\]
is a backward Dirac submersion.
\end{definition}
The following proposition describes those Dirac structures on $\Sigma$ which are basic:

\begin{proposition}[Basic criterion]\label{prop: Libermann simple}
Let $\bs:\Sigma\to M$ be a surjective submersion with connected fibres, and denote by $V:=\ker \bs_*\subset T\Sigma$. A Dirac structure $L$ on $\Sigma$ is basic, $L=\bs^!(L_M)$, if and only if $V\subset L$. In this case, we also have that $\bs:(\Sigma,L)\to (M,L_M)$ is a forward Dirac map.
\end{proposition}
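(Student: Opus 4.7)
The forward implication is immediate: if $L=\bs^!(L_M)$ and $v\in V_p$, then $\bs_*(v)=0\in L_{M,\bs(p)}$, hence $v=v+\bs^*(0)\in\bs^!(L_M)_p=L_p$, so $V\subset L$.

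For the converse, I would assume $V\subset L$ and \emph{define} $L_M$ pointwise by $L_{M,q}:=\bs_!(L_p)$ for any choice of $p\in\bs^{-1}(q)$. One then needs to verify, in order, that (i) this is independent of the choice of $p$, (ii) each $L_{M,q}$ is Lagrangian in $\TT_qM$, (iii) $L_M$ is a smooth subbundle, (iv) $L=\bs^!(L_M)$, and (v) $L_M$ is involutive. The main obstacle is (i). Here the interplay between $V\subset L$ and involutivity of $L$ is crucial: for $v\in\Gamma(V)\subset\Gamma(L)$ and any $a\in\Gamma(L)$, the Dorfman bracket gives $[v,a]=\Lie_v a\in\Gamma(L)$, so the flow $\phi^v_t$ of $v$ preserves $L$ in the sense that the induced automorphism of $\TT\Sigma$ carries $L_p$ to $L_{\phi^v_t(p)}$. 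Since $\bs\circ\phi^v_t=\bs$, this forces $\bs_!(L_p)=\bs_!(L_{\phi^v_t(p)})$; connectedness of the fibres of $\bs$, combined with the constant rank of $V$, implies that any two points in a common fibre can be joined by a piecewise vertical flow, so $\bs_!(L_p)$ is fibrewise constant.

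The remaining items are largely formal. For (ii), $V_p$ is the kernel of the natural linear map $L_p\to\bs_!(L_p)$, yielding $\dim\bs_!(L_p)=\dim L_p-\dim V_p=\dim M$; isotropy in $\TT_qM$ transfers automatically from that of $L$. For (iii), around $q_0\in M$ one picks a local section $\sigma$ of $\bs$ and realises $L_M$ as the image of the smooth, constant-rank bundle map $\sigma^*L\to\TT M$ induced fibrewise by $\bs_!$. Step (iv) reduces to the pointwise linear-algebraic identity $\bs^!(\bs_!(L_p))=L_p+V_p$, valid for any submersion, which collapses to $L_p$ once $V_p\subset L_p$. Finally, for (v), given local $a,b\in\Gamma(L_M)$, choose $\bs$-projectable lifts of the vector parts and add the pulled-back covector parts to obtain $\tilde a,\tilde b\in\Gamma(\TT\Sigma)$ that are $\bs$-related to $a,b$; by (iv) such lifts automatically lie in $\Gamma(L)$, so involutivity of $L$ together with $\bs$-compatibility of the Dorfman bracket (\S 3) implies $[\tilde a,\tilde b]\in\Gamma(L)$ is $\bs$-related to $[a,b]$, whence $[a,b]\in\Gamma(L_M)$. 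The forward-Dirac assertion is built into the construction, since by definition $\bs_!(L_p)=L_{M,\bs(p)}$.
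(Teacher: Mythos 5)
Your proof is correct and follows essentially the same strategy as the paper: well-definedness of $\bs_!(L_p)$ via flows of vertical vector fields along the connected fibres, Lagrangianity by a dimension count, smoothness via local sections of $\bs$, and the identity $L=\bs^!(L_M)$ at the end. Two remarks. First, in step (iv) the identity you declare ``valid for any submersion'' is not: for a general Lagrangian family one has $\bs^!\bigl(\bs_!(L_p)\bigr)=L_p\cap V_p^{\perp}+V_p$ (this is the formula $L^{\bs}=L\cap V^{\perp}+V$ recorded in Section 3 of the paper), not $L_p+V_p$; indeed, an element of $L_p$ whose covector part does not lie in $\im \bs^{*}$ can never belong to $\bs^!$ of anything, so $L_p\not\subset\bs^!\bs_!(L_p)$ in general. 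Under your standing hypothesis $V_p\subset L_p$ one has $L_p\subset V_p^{\perp}$, so both expressions collapse to $L_p$ and your conclusion is unaffected --- but you should either quote the correct general formula or argue directly that $V\subset L$ forces $L\subset T\Sigma\oplus\im\bs^{*}$ and then compare with $\bs^!\bs_!(L_p)$ (the paper does the latter by an explicit element chase). Second, your treatment of involutivity, by lifting sections of $L_M$ to $\bs$-projectable sections of $L=\bs^!(L_M)$ and using $\bs$-relatedness of Dorfman brackets (\S 3), is a legitimate alternative to the paper's route, which instead identifies $L_M|_U$ with $\sigma^!(L)$ for a local section $\sigma$ of $\bs$ transverse to $L$ and invokes \S 12 to get smoothness and involutivity in one stroke; your version is more hands-on but must be run after step (iv), as you correctly arrange.
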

\begin{proof}
If $L$ is basic, $L=\bs^!(L_M)$, then clearly $V\subset L$ and, since $\bs$ is a submersion, we also have that $\bs_!\bs^!(L_M)=L_M$.

Conversely, assume that $V\subset L$. Then the flow of vector fields in $V$ preserves $L$. Since the fibres of $\bs$ are connected, this implies that, for every $p,q\in \bs^{-1}(x)$, we can find a diffeomorphism $\varphi:\Sigma\to \Sigma$ such that $\varphi(p)=q$, $\varphi_!(L_p)=L_q$, and which is vertical: $\bs\circ\varphi=\bs$. So $\bs_!(L_q)=\bs_!\varphi_!(L_p)=\bs_!(L_p)$. Thus, there is a well-defined Lagrangian family $x\mapsto L_{M,x}\in \TT_xM$ such that $\bs_!(L_p)=L_{M,\bs(p)}$ for all $p\in \Sigma$.

Smoothness and involutivity of $L_M$ (in the sense of \S 6 and \S 7) are proven as follows. First, remark that the submersion $\bs$ admits local sections $\sigma:U\to \Sigma$, $U \subset M$, and that any such local section $\sigma$ is transverse to $L$, i.e.\ it satisfies:
\[\sigma_{*}(T_xM)+\mathrm{pr}_{T}(L_{\sigma(x)})=T_{\sigma(x)}\Sigma, \ \ \forall \ x\in U,\]
and this condition implies that $\sigma^!(L)$ is a Dirac structure on $U$ (see \S 12). Second, note that $V\subset L$ implies that $L\subset T\Sigma\oplus \im \bs^*$. Now, if $v+\sigma^{*}\alpha\in \sigma^!(L)$, then $\sigma_*v+\alpha\in L\subset T\Sigma\oplus \im \bs^*$, and so $\alpha=\bs^*\beta$ for some $\beta\in T^*M$; hence $\bs_*\sigma_*v+\beta=v+\beta\in L_M$. Thus, $\sigma^!(L)\subset L_M|_U$, and since these spaces have equal dimension, we conclude that $L_M|_U=\sigma^!(L)$. This proves that $L_M$ is a Dirac structure.

Let us conclude by showing that $\bs^!(L_M)=L$. An element in $\bs^!(L_M)$ has the form $v+\bs^*\alpha$, where $\bs_*v+\alpha\in L_M$. Since $L_M=\bs_!(L)$, there exists $w\in T\Sigma$ such that $\bs_*w=\bs_*v$ and $w+\bs^*\alpha\in L$. But then $w-v\in V\subset L$; hence $v+\bs^*\alpha=(w+\bs^*\alpha)+(v-w)\in L$.
\end{proof}

Given a surjective submersion $\bs : \Sigma \to M$ from a Dirac manifold $(\Sigma,L)$, there is a canonical Lagrangian family (see \S 5)
\[
L^{\bs} \subset \TT\Sigma, \quad L^{\bs}_p:=\bs^!(\bs_!(L_p)) \subset \TT_p \Sigma, \quad p \in \Sigma,
\]
which one can view as the Lagrangian family closest to $L$ among Lagrangian families which contain $V:=\ker \bs_*$ \cite[Section 3]{MCF}, since it satisfies
\begin{equation}\label{eq:L^bs}
L^{\bs} = L \cap  V^{\perp} + V.
\end{equation}
Whether or not the Lagrangian family $L^{\bs}$ is a Dirac structure (see \S 6 - \S 8) plays a key role in the following Dirac-geometric version of Libermann's theorem.

\begin{proposition}[Dirac-Libermann]\label{prop: Libermann}
Let $\bs:\Sigma\to M$ be a surjective submersion with connected fibres. A Dirac structure $L$ on $\Sigma$ can be pushed forward via $\bs$ to a Dirac structure on $M$ if and only if the Lagrangian family:
\[L^{\bs}=\bs^!\bs_!(L)\subset \TT\Sigma\]
is a Dirac structure on $\Sigma$, i.e., if $L^{\bs}$ is a smooth, involutive subbundle of $\TT \Sigma$.
\end{proposition}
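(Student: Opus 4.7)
The plan is to prove both directions, leveraging the basic criterion (Proposition~\ref{prop: Libermann simple}) for the reverse implication. The forward direction is essentially definitional: if $L$ pushes forward to a Dirac structure $L_M$ on $M$, so that $\bs_!(L_p) = L_{M,\bs(p)}$ at every $p$, then $L^{\bs}_p = \bs^!(\bs_!(L_p)) = \bs^!(L_M)_p$, hence $L^{\bs} = \bs^!(L_M)$. Since $\bs$ is a submersion we have $\ker \bs^* = 0$, so by \S 12 (a), (c) the pullback $\bs^!(L_M)$ is smooth and is a Dirac structure.

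For the reverse direction, assume $L^{\bs}$ is a Dirac structure on $\Sigma$. The formula \eqref{eq:L^bs} shows that $V \subset L^{\bs}$, so the basic criterion applied to $L^{\bs}$ produces a Dirac structure $L_M$ on $M$ satisfying $L^{\bs} = \bs^!(L_M)$, together with the forward Dirac identity $\bs_!(L^{\bs}_p) = L_{M,\bs(p)}$ at every point. It then suffices to establish the pointwise equality $\bs_!(L_p) = \bs_!(L^{\bs}_p)$, for this will upgrade the previous identity to $\bs_!(L_p) = L_{M,\bs(p)}$ and conclude that $L$ itself pushes forward to $L_M$.

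The identity $\bs_!(L_p) = \bs_!(L^{\bs}_p)$ is the heart of the argument, and indeed its only subtlety, but the obstacle is purely bookkeeping. The point is that the push-forward $\bs_!$ through a submersion depends only on the intersection with $V_p^{\perp} = T_p\Sigma \oplus \im \bs^*$, since $u + \xi$ is $\bs$-related to an element of $\TT_{\bs(p)}M$ precisely when $\xi \in \im \bs^*$, i.e., when $u + \xi \in V_p^{\perp}$. Because $V_p$ is isotropic, $V_p \subset V_p^{\perp}$, and \eqref{eq:L^bs} then yields $L^{\bs}_p \cap V_p^{\perp} = (L_p \cap V_p^{\perp}) + V_p$; but vectors in $V_p = \ker \bs_*$ push forward to $0 \in \TT_{\bs(p)}M$, so $V_p$ contributes nothing to $\bs_!$. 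This gives $\bs_!(L^{\bs}_p) = \bs_!(L_p \cap V_p^{\perp}) = \bs_!(L_p)$, as desired.
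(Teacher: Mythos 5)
Your proof is correct and follows essentially the same route as the paper's: the forward direction is the observation that $L^{\bs}=\bs^!(L_M)$ is the pullback of a Dirac structure through a submersion, and the converse reduces to the basic criterion (Proposition~\ref{prop: Libermann simple}) via the key identity $\bs_!(L_p^{\bs})=\bs_!(L_p)$, which you justify carefully using \eqref{eq:L^bs} where the paper simply cites \S 11 (it also follows at once from $\bs_!\bs^!=\mathrm{id}$ for submersions).
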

\begin{proof}
First, if $L$ can be pushed forward to a Dirac structure $L_M$ on $M$, then $L^{\bs}=\bs^!(L_M)$ is a Dirac structure on $\Sigma$, as it is the pullback of a Dirac structure through a surjective submersion (see \S 10).

Conversely, assume that $L^{\bs}$ is a Dirac structure. Note that $\bs_!(L^{\bs}_p)=\bs_!(L_p)$ for all $p\in \Sigma$ (see \S 11); hence it suffices to check that $L^{\bs}$ can be pushed forward. Since $V\subset L^{\bs}$, this follows from Proposition \ref{prop: Libermann simple}.
\end{proof}

The purpose of the next two examples is to highlight that, under the hypotheses of Proposition \ref{prop: Libermann}, neither smoothness nor involutivity of $L^{\bs}$ is ensured.

\begin{example}\label{example : co-dirac product is not smooth}
Consider $\bs:\R^2\to\R$, $\bs(x,y)=x$, and let $L=\mathrm{Gr}(x \dd x \wedge \dd y)$. Then the Lagrangian family $L^{\bs}$ is not smooth; explicitly, it is given by
\[
 L^{\bs}_{(x,y)} = \begin{cases}
\mathbb{R}\partial_y+\mathbb{R}x\dd x & \text{if} \ x \neq 0,\\
\mathbb{R}\partial_x+\mathbb{R}\partial_y & \text{if} \ x = 0.
           \end{cases}
\]
In particular, $\bs$ does not push $L$ forward to a Dirac structure on $\R$.
\end{example}

\begin{example}\label{example : co-dirac product is smooth, not involutive}
Consider $\bs:\R^3\to\R^2$, $\bs(x,y,z)=(x,y)$, and let $L=\mathrm{Gr}(z \partial_x \wedge \partial_y)$. In this case the smooth sections $s_i \in \Gamma(\TT \R^3)$ given by
\[s_1 = z\partial_x-\dd y, \quad s_2=z\partial_y+\dd x, \quad s_3=\partial_z \]
span $L^{\bs}$; hence $L^{\bs}$ is a smooth bundle. However, it is not involutive; e.g., $[s_3,s_1] =\partial_x \notin \Gamma(L^{\bs})$. In particular, $\bs$ does not push $L$ forward to a Dirac structure on $\R^2$.
\end{example}

\subsection*{Sufficient criteria}
Next, we give sufficient criteria for $L^{\bs}$ to be a Dirac structure.

\begin{proposition}[Bundle criterion]\label{prop: bundle criteria}
Let $(\Sigma,L)$ be a Dirac manifold and let $\bs : \Sigma \to M$ be a surjective submersion.
\begin{enumerate}[i)]
\item If a vector subbundle $E \subset \TT \Sigma$ exists, such that
\begin{equation*}\label{assumption}
L^{\bs}=E+V,
\end{equation*}
then $L^{\bs}$ is smooth. This condition is also necessary for $L^{\bs}$ to be smooth.
\item If in addition $E$ is involutive, then also $L^{\bs}$ is involutive. Hence, if $\bs$ has connected fibres, then there is a Dirac structure $L_M$ on $M$ for which $\bs:(\Sigma,L)\to (M,L_M)$ is a forward Dirac map.
\end{enumerate}
\end{proposition}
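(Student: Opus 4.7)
The plan is to handle (i) and (ii) separately, and for (ii) to reduce the involutivity question to a finite case analysis via the Courant tensor of \S 9.

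\emph{Smoothness (i).} Given a smooth subbundle $E$ with $L^{\bs}=E+V$, I would invoke the elementary fact that the pointwise sum of two smooth subbundles of $\TT\Sigma$ is itself a smooth subbundle on any open set where the sum has locally constant rank. Since $L^{\bs}$ is a Lagrangian family, its rank is $\dim\Sigma$ everywhere, so the constant-rank condition is automatic and $L^{\bs}$ is smooth. For necessity, if $L^{\bs}$ is smooth, take $E:=L^{\bs}$: since $V\subset L^{\bs}$ by (\ref{eq:L^bs}), the equality $L^{\bs}=E+V$ holds trivially.

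\emph{Involutivity (ii).} Assume in addition that $E$ is involutive. By \S 9, $L^{\bs}$ is a Dirac structure precisely when its Courant tensor $\Upsilon\in\Gamma(\wedge^3(L^{\bs})^*)$ vanishes. Since $\Upsilon$ is tensorial and totally antisymmetric, and since sections of $E$ together with sections of $V$ pointwise span $L^{\bs}$, it suffices to verify that $\langle[a_1,a_2],a_3\rangle=0$ on every triple of local sections drawn from $\Gamma(E)\cup\Gamma(V)$. Three observations will trivialize the four resulting cases: (a) $E$ is involutive, hence $[\Gamma(E),\Gamma(E)]\subset\Gamma(E)\subset\Gamma(L^{\bs})$; (b) sections of $V$ have zero covector part, so the Dorfman bracket of two such sections reduces to the Lie bracket of vertical vector fields, giving $[\Gamma(V),\Gamma(V)]\subset\Gamma(V)\subset\Gamma(L^{\bs})$; (c) $L^{\bs}$ is Lagrangian, so $\langle L^{\bs},L^{\bs}\rangle=0$.

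For triples entirely in $E$ or entirely in $V$, combining (a) or (b) with (c) gives vanishing at once. For two entries in $E$ and one in $V$, the $E$-bracket lies in $L^{\bs}$ by (a) and pairs to zero with the $V$-entry by (c). The only case requiring a small maneuver is one entry in $E$ and two in $V$: here I would use total antisymmetry of $\Upsilon$ to move the $E$-entry into the third slot, reducing the computation to $\langle[v_1,v_2],e\rangle$, which vanishes by (b)+(c). Once $L^{\bs}$ is seen to be a Dirac structure, the pushforward statement under connected fibres is immediate from the Dirac--Libermann Proposition~\ref{prop: Libermann}. I expect no substantial obstacle in carrying this out; the one ingredient that is not entirely routine is that the Courant tensor is tensorial and totally antisymmetric on a general Lagrangian subbundle, which follows from $\langle a_i,a_j\rangle=0$ on $L^{\bs}$ combined with the derivation property of the Dorfman bracket with respect to the pairing.
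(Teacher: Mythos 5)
Your proposal is correct and follows essentially the same route as the paper: smoothness via the constant-rank observation for the sum $E+V$ (the paper phrases this as the image of the constant-rank bundle map $E\oplus V\to\TT\Sigma$, and takes a smooth complement of $V$ in $L^{\bs}$ for the converse, while your choice $E:=L^{\bs}$ works just as well), and involutivity by checking the tensorial, antisymmetric Courant tensor on the four types of triples of sections drawn from $\Gamma(E)$ and $\Gamma(V)$, using involutivity of $E$ and $V$ together with isotropy of $L^{\bs}$, then invoking Proposition~\ref{prop: Libermann}. No gaps.
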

\begin{proof}
Note that $L^{\bs}$ is the image of the vector bundle map
\[F:E\oplus V\rmap \TT\Sigma,\ \ \ F(e,v):= e+v.\]
Since $L^{\bs}$ is a family of Lagrangian subspaces, we conclude that $F$ has constant rank, and therefore $L^{\bs}$ is smooth. Conversely, if $L^{\bs}$ is smooth, then one can take $E$ to be any smooth complement of $V$ in $L^{\bs}$. This proves i).

By choosing a smooth splitting of the vector bundle map $F$, note that any section $s\in \Gamma(L^{\bs})$ can be represented as $s=v+e$, where $v\in \Gamma(V)$ and $e\in \Gamma(E)$. Assume now that $E$ is involutive. Recall that the smooth Lagrangian subbundle $L^{\bs}=E+V$ is Dirac iff the Courant tensor $\Upsilon\in\Gamma(\bigwedge^3 (L^{\bs})^*\big)$, $\Upsilon(s_1,s_2,s_3)=\langle[s_1,s_2],s_3\rangle$ vanishes identically (see \S 9). So to check that $\Upsilon=0$, it suffices to show that it vanishes on sections which are either in $ \Gamma(V)$ or in $\Gamma(E)$. For $i=1,2,3$, let $v_i\in \Gamma(V)$ and $e_i\in \Gamma(E)$. Because $V$ and $E$ are involutive and $E+V$ is isotropic, we have
\[\Upsilon(v_1,v_2,v_3)=0, \quad \Upsilon(v_1,v_2,e_1)=0, \quad \Upsilon(e_1,e_2,v_3)=0, \quad \Upsilon(e_1,e_2,e_3)=0.\]
Thus, $\Upsilon=0$, and therefore $L^{\bs}$ is Dirac. Hence if $\bs$ has connected fibres, we can invoke Proposition \ref{prop: Libermann} to conclude that $\bs$ pushes $L$ forward. This proves ii).
\end{proof}

Since $L^{\bs}=L\cap V^{\perp}+V$ (\ref{eq:L^bs}), a candidate for the vector bundle $E$ in Proposition \ref{prop: bundle criteria} is the isotropic family $L\cap V^{\perp}$ -- which need not be smooth. The next criterion shows that, if $L^{\bs}$ is smooth, then its involutivity is controlled by that of $L\cap V^{\perp}$.

\begin{proposition}[$L\cap V^{\perp}$ criterion]\label{pro: L[s]}
Let $(\Sigma,L)$ be a Dirac manifold and let $\bs : \Sigma \to M$ be a surjective submersion.
\begin{enumerate}[i)]
\item If $L\cap V^{\perp}$ (or equivalently, if $L\cap V$) is smooth, then $L^{\bs}$ is smooth.
\item If $L^{\bs}$ is smooth, then $L^{\bs}$ is involutive if and only if $L\cap V^{\perp}$ is involutive in the sense of \S 7.
\end{enumerate}
\end{proposition}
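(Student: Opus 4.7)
My plan here is to leverage the bundle criterion (Proposition \ref{prop: bundle criteria}(i)) applied to the decomposition $L^{\bs}=L\cap V^{\perp}+V$ from (\ref{eq:L^bs}). First, the equivalence of smoothness of $L\cap V^{\perp}$ and of $L\cap V$ follows from a dimension count: since $L$ is Lagrangian, $(L+V)^{\perp}=L^{\perp}\cap V^{\perp}=L\cap V^{\perp}$, so that $\dim(L\cap V^{\perp})_p-\dim(L\cap V)_p=\dim M$ at every $p$. Each family is a pointwise intersection of smooth subbundles, hence smooth iff of constant rank, and the constant-difference identity makes these two conditions equivalent. With $L\cap V^{\perp}$ smooth, the bundle criterion applied to $E:=L\cap V^{\perp}$ immediately yields smoothness of $L^{\bs}$.

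\textbf{Key identity and forward direction of (ii).} For (ii), the crux is the identity $L\cap L^{\bs}=L\cap V^{\perp}$. By the modular law (valid since $L\cap V^{\perp}\subset L$), $L\cap (L\cap V^{\perp}+V)=(L\cap V^{\perp})+(L\cap V)$; and $V\subset V^{\perp}$ forces $L\cap V\subset L\cap V^{\perp}$, so the sum collapses. The forward direction is then immediate: for $a,b\in\Gamma(L\cap V^{\perp})$, these are sections of both the Dirac structure $L$ (involutive) and of $L^{\bs}$ (involutive by hypothesis), hence $[a,b]\in\Gamma(L)\cap\Gamma(L^{\bs})=\Gamma(L\cap L^{\bs})=\Gamma(L\cap V^{\perp})$.

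\textbf{Converse direction.} For the converse, I would argue via the Courant tensor $\Upsilon\in\Gamma(\wedge^3(L^{\bs})^*)$ of the smooth Lagrangian $L^{\bs}$, whose vanishing is equivalent to involutivity (\S 9). By tensoriality, total antisymmetry, and the pointwise decomposition $L^{\bs}_p=V_p+(L\cap V^{\perp})_p$, it suffices to check $\Upsilon$ on the four configurations $(v_1,v_2,v_3)$, $(v_1,v_2,e)$, $(v,e_1,e_2)$, and $(e_1,e_2,e_3)$ with $v_i\in V$ and $e_i\in L\cap V^{\perp}$. The first two follow from involutivity of $V$ together with $V\subset V^{\perp}$, which places $[v_1,v_2]\in V$ and makes it pair trivially with anything in $V^{\perp}$. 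The last two invoke the hypothesized involutivity of $L\cap V^{\perp}$, giving $[e_1,e_2]\in L\cap V^{\perp}\subset L$, which then pairs trivially with $v\in V$ (since $L\cap V^{\perp}\subset V^{\perp}$) and with $e_3\in L$ (by Lagrangianity of $L$).

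\textbf{Expected obstacle.} The delicate step is that the tensorial Courant-tensor calculation requires extending an arbitrary vector $e\in(L\cap V^{\perp})_p$ to a local section of $L\cap V^{\perp}$, which is automatic only when that linear family is itself smooth. Since $L^{\bs}$ smooth does not obviously force $L\cap V^{\perp}$ smooth, I expect this to be the main obstacle. A natural route is to show first that smoothness of $L^{\bs}$ actually implies constant rank of $L\cap V$—for instance, by using the identity $L\cap L^{\bs}=L\cap V^{\perp}$ and analysing how the smooth quotient $L^{\bs}/V$ reflects the pointwise splitting $(L\cap V^{\perp})/(L\cap V)$—thereby reducing to the case of part (i), where the sections-level hypothesis on $L\cap V^{\perp}$ can be applied pointwise.
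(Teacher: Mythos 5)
Part (i) and the forward implication of (ii) are correct and essentially reproduce the paper's own argument: constant rank of $L\cap V$ iff constant rank of $L+V$ iff of $(L+V)^{\perp}=L\cap V^{\perp}$, then the bundle criterion with $E=L\cap V^{\perp}$; and the identity $L\cap L^{\bs}=L\cap V^{\perp}$ (which you prove via the modular law, and the paper merely states) gives the forward direction. The gap is in the converse implication of (ii). You correctly diagnose the obstacle — evaluating the Courant tensor against the decomposition $L^{\bs}=V+L\cap V^{\perp}$ requires extending an arbitrary element of $(L\cap V^{\perp})_p$ to a local section of $L\cap V^{\perp}$, which fails where the rank of that family jumps — but the repair you propose is provably impossible: smoothness of $L^{\bs}$ does \emph{not} imply constant rank of $L\cap V$ (equivalently, of $L\cap V^{\perp}$). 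Example \ref{ex : assumption not satisfied} is a counterexample: there $L^{\bs}=\bs^!(T\mathbb{R})=T\mathbb{R}^3$ is smooth, yet $V^{\omega}$ (whose rank equals that of $L\cap V^{\perp}=\mathcal{R}_{\omega}(V^{\omega})$) jumps from $1$ on $\{x\neq 0\}$ to $3$ on $\{x=0\}$; this is precisely the point of Remark \ref{rem : V perp not smooth}.

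The correct repair, and what the paper actually does, is a density argument: the rank of $L\cap V^{\perp}$ is upper semicontinuous (it is determined by the rank of the sum of two smooth subbundles), hence locally constant on an open dense subset $U\subset\Sigma$. On $U$ the family $L\cap V^{\perp}$ is a genuine involutive subbundle, so your tensorial computation (equivalently, Proposition \ref{prop: bundle criteria} ii) applied to $E=L\cap V^{\perp}|_U$) goes through and yields $\Upsilon|_U=0$. Since $L^{\bs}$ is assumed smooth on all of $\Sigma$, its Courant tensor $\Upsilon$ is a continuous section of $\wedge^3(L^{\bs})^*$ over all of $\Sigma$, and vanishing on a dense set forces $\Upsilon\equiv 0$. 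Without this step (or an equivalent one) your converse does not close.
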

\begin{proof}
Note that $L \cap V$ has constant rank iff $L+V$ has constant rank iff $(L+V)^{\perp}=L\cap V^{\perp}$ has constant rank. Thus i) follows from Proposition \ref{prop: bundle criteria} i).

Note that $L\cap V^{\perp}=L \cap L^{\bs}$. Thus, if $L^{\bs}$ is involutive, then, being the intersection of two involutive subbundles, so is $L\cap V^{\perp}$. Conversely, suppose that $L\cap V^{\perp}$ is involutive. Note that the isotropic family $L\cap V^{\perp}$ restricts on an open, dense subset $U \subset \Sigma$ to a vector bundle $E=L\cap V^{\perp}|_U$; hence Proposition \ref{prop: bundle criteria} ii) applies, showing that $L^{\bs}|_U $ is a Dirac structure on $U$. Hence the Courant tensor $\Upsilon$ of $L^{\bs}$ (see \S 9) vanishes on an open, dense subset, whence it vanishes on the whole $\Sigma$. Thus, $L^{\bs}$ is a Dirac structure.
\end{proof}

\begin{remark}\label{rem : Ls basic}
When $L^{\bs}$ is a Dirac structure, and $\bs$ has connected fibres, $L^{\bs}$ is a basic Dirac structure, i.e., $L^{\bs}=\bs^!(L_M)$ for a Dirac structure $L_M$ on $M$. In this case, there is an induced bundle map $\bs_! : L^{\bs} \to L_M$ covering $\bs:\Sigma \to M$, where $s_!(a) = b$ iff $a \in L^{\bs}_{p}$ and $b \in L_{M,\bs(p)}$ are $\bs$-related: $a \sim_{\bs} b$ (see \S 3). Note that this gives rise to an exact sequence of Lie algebroids
\[ 0 \rmap  V \rmap  L^{\bs} \stackrel{\bs_{!}}{\rmap} L_M \rmap  0.\]
The next criterion concerns the existence of a splitting of this sequence.
\end{remark}

\begin{proposition}[Comorphism criterion]\label{prop: comorphism criterion, Dirac}
Let $\bs : \Sigma \to M$ be a surjective submersion with connected fibres from the Dirac manifold $(\Sigma,L)$. If an involutive subbundle $E \subset \TT\Sigma$ exists such that
\begin{equation*}
L^{\bs}=E \oplus V,
\end{equation*}
then there is an induced comorphism of Lie algebroids $\Phi:\bs^*(L_M) \to E$, where $L_M$ is the push-forward Dirac structure on $M$, $L_M=\bs_{!}(L)$.
\end{proposition}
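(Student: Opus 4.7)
The plan is to exploit the splitting $L^{\bs}=E\oplus V$ in order to lift each element of $L_M$ uniquely into $E$ via $\bs$-relatedness, and then let involutivity of $E$ together with the standard behaviour of the Dorfman bracket under $\bs$-related sections (\S 3) handle the bracket axiom essentially for free, while anchor compatibility falls out of the definition.

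The first step is to observe that the hypotheses, combined with Propositions~\ref{prop: bundle criteria} and~\ref{prop: Libermann}, already guarantee that $L_M=\bs_!(L)$ is a well-defined Dirac structure and that $L^{\bs}=\bs^!(L_M)$; in particular, by Remark~\ref{rem : Ls basic}, the map $\bs_!:L^{\bs}\to L_M$ is a fibrewise surjection with kernel $V$. I would then define $\Phi:\bs^*(L_M)\to E$ pointwise by setting $\Phi_p(b)$ equal to the $E_p$-component of any preimage of $b$ under $\bs_!$: given $b\in L_{M,\bs(p)}$, pick $a\in L^{\bs}_p$ with $a\sim_{\bs} b$, decompose $a=e+v$ according to $L^{\bs}_p=E_p\oplus V_p$, and put $\Phi_p(b):=e$. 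Well-definedness and smoothness are immediate consequences of $E\cap V=0$ and of the smoothness of the splitting applied to local smooth lifts of $b$.

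The crucial observation is that $\Phi_p(b)$ is in fact the \emph{unique} element of $E_p$ that is $\bs$-related to $b$: if two such elements differed, their difference would be $\bs$-related to $0$, and hence lie in $V_p\cap E_p=0$. Anchor compatibility then reduces to the tautology $\Phi_p(b)\sim_{\bs} b$. For the bracket axiom, the induced sections $\Phi^{\dagger}(b_i)=\Phi\circ b_i\circ\bs\in\Gamma(E)$ are $\bs$-related to $b_i$ by construction; preservation of the Dorfman bracket under $\bs$-related sections (\S 3) then yields $[\Phi^{\dagger}(b_1),\Phi^{\dagger}(b_2)]\sim_{\bs}[b_1,b_2]$, and involutivity of $E$ places the left-hand side in $\Gamma(E)$. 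The uniqueness-of-lift observation forces it to coincide with $\Phi^{\dagger}([b_1,b_2])$, so that $\Phi^{\dagger}$ is a Lie algebra homomorphism.

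I do not anticipate any serious obstacle: the entire argument is driven by the single pointwise uniqueness-of-lift statement, which is precisely what the transversality assumption $E\cap V=0$ encodes. Everything else is formal, once one recognizes that $E$ plays the role of a splitting of the exact sequence $0\to V\to L^{\bs}\to L_M\to 0$ from Remark~\ref{rem : Ls basic}.
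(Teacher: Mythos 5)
Your proposal is correct and follows essentially the same route as the paper: the paper constructs $\Phi$ as the inverse of the isomorphism $E \diffto \bs^*(L_M)$ coming from the exact sequence $0 \to V \to L^{\bs} \to \bs^*(L_M) \to 0$, which is exactly your ``take the $E$-component of a lift'' map, and it likewise characterizes $\Phi^{\dagger}(a)$ as the unique section of $E$ that is $\bs$-related to $a$. The bracket and anchor arguments (involutivity of $E$ plus preservation of the Dorfman bracket under $\bs$-related sections, then uniqueness of the lift) coincide with the paper's.
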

\begin{proof}
By Proposition \ref{prop: bundle criteria} ii), $\bs$ pushes $L$ forward to a Dirac structure $L_M$ on $M$. The surjective vector bundle map $\bs_!: L^{\bs} \to L_M$ covering $\bs$ induces an exact sequence of vector bundles over $\Sigma$
\[
 0 \rmap V \rmap L^{\bs} \rmap \bs^*(L_M) \rmap 0
\]which restricts to a vector bundle isomorphism on $E \diffto \bs^*(L_M)$ (see Remark \ref{rem : Ls basic}). Denote by $\Phi : \bs^*(L_M) \to E$ the inverse isomorphism, which is a vector bundle comorphism. By definition of $\Phi$, we have that, for each $a \in \Gamma(L_M)$, $\Phi^{\dagger}(a) \in \Gamma(E)$ is the unique section such that $\Phi^{\dagger}(a) \sim_{\bs} a$; in particular, it is compatible with anchors. Moreover (see \S 3)
$\Phi^{\dagger}([a,b]) \sim_{\bs} [a,b]$, and $[\Phi^{\dagger}(a),\Phi^{\dagger}(b)] \sim_{\bs} [a,b]$ imply that $\Phi^{\dagger}([a,b])=[\Phi^{\dagger}(a),\Phi^{\dagger}(b)]$; hence $\Phi:\bs^*(L_M) \to E$ is a comorphism of Lie algebroids.
\end{proof}

\subsection*{Coupling Dirac structures}
As an application of Proposition \ref{prop: Libermann}, we consider the case of when a surjective submersion $\bs : \Sigma \to M$ pushes forward a \emph{coupling Dirac structure} $L \subset \TT \Sigma$, i.e., a Dirac structure satisfying:
\[L\cap (V\oplus V^{\circ})=0, \quad \text{where} \ V=\ker\bs_*,\  V^{\circ}=\mathrm{im}\phantom{l}\bs^*.\]
Such Dirac structures are also called \emph{horizontally nondegenerate} \cite{Wade}.

A Lagrangian subbundle $L \subset \TT\Sigma$ satisfying the condition above can be described by a \emph{Vorobjev triple} $(H,\omega,\pi)$, where $H\subset T\Sigma$ is an Ehresmann connection for $\bs:\Sigma \to M$, $\omega$ is a two-form on $H$, and $\pi$ is a vertical bivector field:
\[T\Sigma=H\oplus V, \ \ \omega\in \Gamma(\wedge^2 V^{\circ})\subset \Omega^2(\Sigma), \ \ \pi\in \Gamma(\wedge^2 V )\subset \mathfrak{X}^2(\Sigma),\]
in terms of which $L$ has the direct sum decomposition:
\[L=\{v+\iota_v\omega\ | \ v\in H\}\oplus\{\xi+\pi^{\sharp}\xi \ | \ \xi\in H^{\circ}\}.\]
Involutivity of $L$ is equivalent to the following conditions (see e.g.\ \cite{Wade}):
\begin{enumerate}[(a)]
\item $\pi$ is a Poisson structure: $[\pi,\pi]=0$,
\item $\Lie_{u_1}\pi\in \Gamma(H\wedge T\Sigma)$,
\item $[u_1,u_2]+\pi^{\sharp}\iota_{u_1}\iota_{u_2}\dd \omega\in \Gamma(H)$,
\item $\dd \omega(u_1,u_2,u_3)=0$,
\end{enumerate}
for all $u_1,u_2,u_3\in \Gamma(H)$.

In this situation, Proposition \ref{prop: Libermann} specializes to:
\begin{corollary}[Coupling Dirac structures]\label{coro : vorobjev}
Let $\bs:\Sigma\to M$ be a surjective submersion with connected fibres, and let $L$ be a coupling Dirac structure on $\Sigma$, with corresponding Vorobjev triple $(H,\omega,\pi)$. Then $L$ can be pushed forward via $\bs$ to a Dirac structure on $M$ if and only if $\omega$ is closed. In this case, $\omega=\bs^*\eta$, where $\eta$ is a closed two-form on $M$, $\bs_{!}(L)=\mathrm{Gr}(\eta)$, and moreover, $H$ is involutive.
\end{corollary}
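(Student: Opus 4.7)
The plan is to apply the Dirac-Libermann proposition directly. I first compute $L^{\bs}=L\cap V^{\perp}+V$ explicitly using the Vorobjev decomposition, then reduce involutivity of $L^{\bs}$ to the closedness of $\omega$, and finally read off the form $\eta$ from the basic criterion.

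\emph{Step 1: explicit formula for $L^{\bs}$.} Since $\bs$ is a submersion, $V^{\perp}=T\Sigma\oplus V^{\circ}$, where $V^{\circ}=\im\bs^*$. Decompose an element of $L$ as $(v+\pi^{\sharp}\xi)+(\iota_v\omega+\xi)$ with $v\in H$, $\xi\in H^{\circ}$. Its cotangent part lies in $V^{\circ}=H^{\circ}\oplus V^{\circ}\cap V^{\circ}$; since $\iota_v\omega\in V^{\circ}$ and $H^{\circ}\cap V^{\circ}=0$, the condition that it lie in $V^{\circ}$ forces $\xi=0$. Hence
\[
L\cap V^{\perp}=E:=\{v+\iota_v\omega\ |\ v\in H\}\cong H,
\]
so $L\cap V^{\perp}$ is a smooth subbundle. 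By Proposition \ref{pro: L[s]}, $L^{\bs}=E+V$ is smooth, and Dirac iff involutive.

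\emph{Step 2: involutivity of $L^{\bs}$ is equivalent to $\dd\omega=0$.} For $v_1,v_2\in\Gamma(H)$, a standard Cartan manipulation with the Dorfman bracket yields
\[
[v_1+\iota_{v_1}\omega,\, v_2+\iota_{v_2}\omega]=[v_1,v_2]+\iota_{[v_1,v_2]}\omega+\iota_{v_2}\iota_{v_1}\dd\omega.
\]
Writing $[v_1,v_2]=h+w$ with $h\in H$, $w\in V$, and using $\iota_w\omega=0$, membership of this bracket in $L^{\bs}=E+V$ is equivalent to $\iota_{v_2}\iota_{v_1}\dd\omega=0$ for all $v_1,v_2\in\Gamma(H)$. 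The other mixed brackets $[E,\Gamma(V)]$ and $[\Gamma(V),\Gamma(V)]$ are automatically in $L^{\bs}$ when $\dd\omega=0$, using $\Lie_w\omega=\dd\iota_w\omega+\iota_w\dd\omega=0$ for $w\in\Gamma(V)$. To convert $\iota_{v_2}\iota_{v_1}\dd\omega=0$ into $\dd\omega=0$, decompose $\dd\omega$ via $T\Sigma=H\oplus V$: the $\wedge^3 H^*$ part vanishes by condition (d); the $H^*\wedge\wedge^2 V^*$ part and the $\wedge^3 V^*$ part vanish automatically because $\iota_w\omega=0$ and $V$ is involutive (every summand in the Cartan formula contains $\omega$ evaluated on a vertical argument); and the remaining $\wedge^2 H^*\wedge V^*$ part is precisely what $\iota_{v_2}\iota_{v_1}\dd\omega=0$ kills. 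Hence $L^{\bs}$ is involutive iff $\dd\omega=0$.

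\emph{Step 3: conclusion via Dirac-Libermann and identification of $\eta$.} By Proposition \ref{prop: Libermann}, $L$ pushes forward iff $L^{\bs}$ is a Dirac structure, and by Steps 1--2 this happens precisely when $\omega$ is closed. When $\dd\omega=0$, the form $\omega$ is horizontal ($\iota_w\omega=0$) and invariant ($\Lie_w\omega=0$) for all $w\in\Gamma(V)$; since $\bs$ has connected fibres, $\omega=\bs^*\eta$ for a unique $\eta\in\Omega^2(M)$, necessarily closed by injectivity of $\bs^*$. For the identification $\bs_!(L)=\mathrm{Gr}(\eta)$, observe that an element $(X,\xi)\in L_{M,\bs(p)}$ is $\bs$-related to some $(v+w,\iota_v\omega)\in L^{\bs}_p$, so $X=\bs_*v$ and $\bs^*\xi=\iota_v\bs^*\eta=\bs^*\iota_X\eta$; injectivity of $\bs^*$ gives $\xi=\iota_X\eta$, and equality of dimensions yields $L_M=\mathrm{Gr}(\eta)$. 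Finally, involutivity of $H$ follows from condition (c) once $\dd\omega=0$.

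The substantive step is Step 2: the bracket identity is routine, but the decomposition argument showing that involutivity of $L^{\bs}$ forces the full three-form $\dd\omega$ to vanish (not merely one of its components) is the crux, and hinges on the automatic vanishing of the components involving two or more vertical entries.
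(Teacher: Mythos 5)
Your proof is correct and follows essentially the same route as the paper: the paper simply observes that $L\cap V^{\perp}+V=\mathrm{Gr}(\omega)$ (your $E+V$ is exactly this graph, since $\iota_w\omega=0$ for $w\in V$), cites the standard fact (\S 8) that the graph of a two-form is Dirac iff the form is closed --- which is what your Step 2 re-derives by hand, correctly handling the components of $\dd\omega$ with two or more vertical entries --- and obtains $\omega=\bs^*\eta$ from the basic criterion (Proposition \ref{prop: Libermann simple}) rather than from your direct fibrewise computation. (The displayed identity ``$V^{\circ}=H^{\circ}\oplus V^{\circ}\cap V^{\circ}$'' in your Step 1 is garbled --- presumably you meant $T^*\Sigma=H^{\circ}\oplus V^{\circ}$ --- but the surrounding argument makes the intended point clear and the conclusion $\xi=0$ is right.)
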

\begin{proof}
It is easy to check that $L^{\bs}=\mathrm{Gr}(\omega)$. This is always a smooth bundle, and it is a Dirac structure iff $\omega$ is closed. Thus, the first part follows from Proposition \ref{prop: Libermann}. By Proposition \ref{prop: Libermann simple}, $\mathrm{Gr}(\omega)$ is a basic Dirac structure, and hence $\omega$ is a basic two-form, i.e.\ $\omega=\bs^*\eta$, where $\eta$ is a closed two-form on $M$, and $\bs:(\Sigma,L)\to (M,\mathrm{Gr}(\eta))$ is a forward Dirac map. By (c) above, $\dd \omega=0$ implies that $H$ is involutive.
\end{proof}

\section{Weak dual pairs and dual pairs}\label{sec : weak dual pairs}

In this section we discuss the notions of \emph{weak dual pairs} and \emph{dual pairs} in Dirac geometry. In the setting of Lie groupoids endowed with multiplicative two-forms, these notions correspond to \emph{over-presymplectic groupoids} \cite[Definition 4.6]{BCWZ} and
\emph{presymplectic groupoids} \cite[Definition 2.1]{BCWZ}, respectively.

\subsection*{Pushing forward two-forms}
In order to motivate the notion of weak dual pairs, we begin by specializing Proposition \ref{prop: Libermann} to the case of closed two-forms. Given a closed two-form $\omega\in \Omega^2(\Sigma)$, denote the corresponding \emph{gauge transformation} by
\begin{align*}
\mathcal{R}_{\omega}:\TT \Sigma\rmap\TT \Sigma, \quad \mathcal{R}_{\omega}(u+\xi):=u+\xi+\iota_u\omega.
\end{align*}
The $\omega$-\emph{orthogonal} of a linear space $E\subset T\Sigma$ will be denoted by
\[E^{\omega}:=T\Sigma\cap \mathcal{R}_{\omega}(E)^{\perp}\subset T\Sigma.\]
Given a surjective submersion $\bs:\Sigma\to M$ and denoting $V:=\ker\bs_*$, we have that
\[\mathrm{Gr}(\omega)\cap V^{\perp}=\mathcal{R}_{\omega}(V^{\omega}),\]
and so the Lagrangian family $\mathrm{Gr}(\omega)^{\bs}$ becomes
\begin{equation}\label{eq: lagr fam}
\mathrm{Gr}(\omega)^{\bs}=V+\mathrm{Gr}(\omega) \cap V^{\perp}=V+\mathcal{R}_{\omega}(V^{\omega}).
\end{equation}
The criteria from Section \ref{sec : Pushing forward Dirac structures} specialize to the following:

\begin{corollary}\label{cor : forms}
Let  $\bs : \Sigma \to M$ be a surjective submersion with connected fibres, $\omega \in \Omega^2(\Sigma)$ be a closed two-form, and let $V:=\ker \bs_*$.
\begin{enumerate}[i)]
\item $\bs$ pushes $\mathrm{Gr}(\omega)$ to a Dirac structure $L_M$ on $M$ if and only if the Lagrangian family $\mathrm{Gr}(\omega)^{\bs}=V+\mathcal{R}_{\omega}(V^{\omega})$ is a Dirac structure on $\Sigma$. In this case,
\[L_M \ \ \text{is Poisson} \quad \iff \quad \ker\omega\subset V.\]
\item If $V^{\omega}$ is smooth then also $\mathrm{Gr}(\omega)^{\bs}$ is smooth. On the other hand, if $\mathrm{Gr}(\omega)^{\bs}$ is smooth then it is a Dirac structure exactly when the linear family $V^{\omega}$ is involutive.
\item If there exists a smooth subbundle $W\subset T\Sigma$ such that
\begin{equation}\label{eq : important}
\mathrm{Gr}(\omega)^{\bs}=V+\mathcal{R}_{\omega}(W),
\end{equation}
then $\mathrm{Gr}(\omega)^{\bs}$ is smooth. If $W$ is also involutive, then $\mathrm{Gr}(\omega)^{\bs}$ is a Dirac structure, and so $\bs$ pushes $\mathrm{Gr}(\omega)$ forward to a Dirac structure $L_M$ on $M$.

\noindent Moreover, if the decomposition above is a direct sum
\begin{equation*}
\mathrm{Gr}(\omega)^{\bs}=V\oplus \mathcal{R}_{\omega}(W)
\end{equation*}
then there is an induced Lie algebroid comorphism $\Phi:\bs^*(L_M) \to W$.
\end{enumerate}
\end{corollary}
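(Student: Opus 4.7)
Each of the three parts reduces to a specialization of the general push-forward criteria from Section \ref{sec : Pushing forward Dirac structures}. I would rely on three elementary observations: (a) equation (\ref{eq: lagr fam}), $\mathrm{Gr}(\omega)^{\bs}=V+\mathcal{R}_{\omega}(V^{\omega})$, has already been recorded in the paragraph preceding the corollary; (b) since $\omega$ is closed, the gauge transformation $\mathcal{R}_{\omega}$ is a fiberwise linear automorphism of $\TT\Sigma$ which intertwines Dorfman brackets --- concretely, for vector fields $u,v$ one checks by Cartan calculus that $[\mathcal{R}_{\omega}(u),\mathcal{R}_{\omega}(v)]=\mathcal{R}_{\omega}([u,v])$ using $\dd\omega=0$ --- so $\mathcal{R}_{\omega}$ preserves rank, smoothness and involutivity; and (c) $\mathrm{Gr}(\omega)\cap T\Sigma=\ker\omega$, a direct computation.

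For part (i), the ``if and only if'' is immediate from Proposition \ref{prop: Libermann} applied to $L=\mathrm{Gr}(\omega)$, with the explicit description of $\mathrm{Gr}(\omega)^{\bs}$ supplied by (a). For the Poisson criterion I would argue separately: a Dirac structure $L_M$ is the graph of a (necessarily Poisson) bivector precisely when $L_M\cap TM=0$. Unwinding the definition of the push-forward, $L_M\cap T_xM$ consists of vectors $\bs_*u_0$ with $u_0\in\mathrm{Gr}(\omega)_p\cap T_p\Sigma$ and $p\in\bs^{-1}(x)$; by (c) this equals $\bs_*(\ker\omega|_{\bs^{-1}(x)})$, which vanishes for every $x$ iff $\ker\omega\subseteq V$.

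For part (ii), I would appeal to Proposition \ref{pro: L[s]} with $L=\mathrm{Gr}(\omega)$. The key translation is $L\cap V^{\perp}=\mathcal{R}_{\omega}(V^{\omega})$, already noted above the corollary. By (b), smoothness of $V^{\omega}$ is equivalent to smoothness of $\mathcal{R}_{\omega}(V^{\omega})=L\cap V^{\perp}$, so the first sentence follows from Proposition \ref{pro: L[s]} i); and involutivity of $V^{\omega}$ as a linear family in the sense of \S 7 is equivalent to involutivity of $\mathcal{R}_{\omega}(V^{\omega})$ by the bracket-intertwining property in (b), so the second sentence follows from Proposition \ref{pro: L[s]} ii).

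For part (iii), set $E:=\mathcal{R}_{\omega}(W)$; by (b), $E$ inherits from $W$ the properties of being smooth and, if $W$ is involutive, of being involutive. Since $\mathrm{Gr}(\omega)^{\bs}=V+E$, the first two assertions follow from Proposition \ref{prop: bundle criteria}. If the sum is direct, Proposition \ref{prop: comorphism criterion, Dirac} produces a Lie algebroid comorphism $\bs^*(L_M)\to E$, and composing with the Lie algebroid isomorphism $\mathrm{pr}_T:E\to W$ (the inverse of $\mathcal{R}_{\omega}|_W$, which intertwines brackets by (b)) yields the desired $\Phi:\bs^*(L_M)\to W$. The only non-bookkeeping step throughout is the bracket identity in (b), a standard one-line Cartan-calculus check that crucially uses $\dd\omega=0$.
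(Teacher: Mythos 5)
Your proposal is correct and follows essentially the same route as the paper: part (i) via Proposition \ref{prop: Libermann} together with the identity $\bs_!(\mathrm{Gr}(\omega)_p)\cap T_{\bs(p)}M=\bs_*(\ker\omega_p)$, part (ii) via Proposition \ref{pro: L[s]} under the translation $\mathrm{Gr}(\omega)\cap V^{\perp}=\mathcal{R}_{\omega}(V^{\omega})$, and part (iii) via Propositions \ref{prop: bundle criteria} and \ref{prop: comorphism criterion, Dirac} with $E=\mathcal{R}_{\omega}(W)$. The only difference is that you spell out the details the paper leaves implicit, namely that the gauge transformation $\mathcal{R}_{\omega}$ for closed $\omega$ preserves smoothness and intertwines Dorfman brackets, and that one post-composes the comorphism into $E$ with the isomorphism $\mathrm{pr}_T:E\to W$; both checks are correct.
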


\begin{proof}
Proposition \ref{prop: Libermann} implies the first part of $i)$; for the second part,
note that
\[\bs_!(L_{p})\cap T_{\bs(p)}M=\bs_*(\ker\omega_p),\]
and therefore the induced Dirac structure is Poisson if and only if this space is trivial, which is equivalent to $\ker\omega\subset V$. Item $ii)$ is implied by Proposition \ref{pro: L[s]}. The first part of item $iii)$ follows from Propositions \ref{prop: bundle criteria} and the second from Proposition \ref{prop: comorphism criterion, Dirac}.
\end{proof}

Thus the condition that there be an involutive $W \subset T\Sigma$ satisfying (\ref{eq : important}) is sufficient to ensure that a surjective submersion $\bs:\Sigma \to M$ pushes $\mathrm{Gr}(\omega)$ forward, but it is certainly not necessary, as the following example illustrates:

\begin{example}\label{ex : assumption not satisfied}
Consider $\bs:\R^3\to \R$, $\bs(x,y,z)=x$ and $\omega=\dd (x^2 y)\wedge \dd z$. Then 
\[\bs:(\R^3,\mathrm{Gr}(\omega))\rmap (\R,T\R)\]
is a forward Dirac map.

Note that there cannot be a smooth subbundle $W \subset T\Sigma$ satisfying $\mathrm{Gr}(\omega)^{\bs}=V+\mathcal{R}_{\omega}(W)$. Indeed, such a subbundle would have to be spanned by the vector field $v:=x\partial_x-2y\partial_y$ on $x \neq 0$, yet this cannot be extended smoothly (as a line bundle) over points of the form $(0,0,z)$, since:
\[\lim_{y\to 0}\lim_{x\to 0}\langle v\rangle=\langle\partial_y\rangle\ \ \textrm{and} \ \ \lim_{x\to 0}\lim_{y\to 0}\langle v\rangle=\langle\partial_x\rangle.\]
\end{example}

\begin{remark}\label{rem : V perp not smooth}
Part ii) of Corollary \ref{cor : forms} resembles the most Libermann's theorem in Poisson geometry. Namely, assuming that $\mathrm{Gr}(\omega)^{\bs}$ is smooth, $\bs$ pushes $\mathrm{Gr}(\omega)$ forward if and only if $V^{\omega}$ is involutive. However, note that even if this is the case, the rank of $V^{\omega}$ need not be lower semicontinuous (see Example \ref{ex : assumption not satisfied}); thus, even if it is involutive, it need not be a singular foliation (in the sense of \cite{Stefan}).
\end{remark}

\subsection*{Weak dual pairs}

The key ingredient in defining weak dual pairs is condition (\ref{eq : important}) of Corollary \ref{cor : forms} $iii)$. First, we give several algebraic reformulations of this condition, which, in particular, show that $V$ and $W$ play a symmetric role (note that condition (a) in Lemma \ref{lem : equivalent weak} below is the pointwise version of (\ref{eq : important})).

\begin{lemma}\label{lem : equivalent weak}
Consider a finite dimensional vector space $A$, vector subspaces $B\subset A$ and $C\subset A$, and a 2-form $\omega\in \wedge^2A^*$. Denote $\ker \omega\subset A$ by $K$. Then the following are equivalent:
\begin{enumerate}[$(a)$]
\item $B+\mathcal{R}_{\omega}(B^{\omega})=B+\mathcal{R}_{\omega}(C)\subset A\oplus A^*$;
\item $C+\mathcal{R}_{-\omega}(C^{\omega})=C+\mathcal{R}_{-\omega}(B)\subset A\oplus A^*$;
\item $B^{\omega}=C+B\cap K$;
\item $C^{\omega}=B+C\cap K$;
\item $\omega(B,C)=0$ and $\mathrm{dim}(B\cap K \cap C)=\mathrm{dim}(B)+\mathrm{dim}(C)-\mathrm{dim}(A)$.
\end{enumerate}
\end{lemma}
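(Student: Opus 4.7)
The plan is to leverage a symmetry to cut the work in half. The involution $(B,\omega) \leftrightarrow (C,-\omega)$ fixes $K=\ker\omega$, sends statement (a) to (b), sends (c) to (d), and preserves (e) (since $\omega(B,C)=0$ iff $(-\omega)(C,B)=0$, and $B\cap K\cap C$ is obviously symmetric in $B,C$). So it suffices to establish the chain (a) $\Leftrightarrow$ (c) $\Leftrightarrow$ (e); the equivalences with (b) and (d) then follow automatically.

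For (c) $\Leftrightarrow$ (e), I would start by writing $B^\omega$ as $(\omega^\flat)^{-1}(B^\circ)$, where $\omega^\flat:A\to A^*$ is contraction with $\omega$, and obtain the standard dimension formula $\dim B^\omega = \dim A - \dim B + \dim(B\cap K)$ (easiest via the induced nondegenerate form on $A/K$). The crucial structural observation is that $B\cap K\subset B^\omega$ holds unconditionally; consequently, the inclusion $C + B\cap K \subset B^\omega$ collapses to $C\subset B^\omega$, i.e., to $\omega(B,C)=0$. Assuming this, (c) reduces to the reverse inclusion, and, because both sides are then nested, only to an equality of dimensions. Plugging in $\dim(C + B\cap K) = \dim C + \dim(B\cap K) - \dim(B\cap C\cap K)$ and the formula for $\dim B^\omega$, the equation collapses exactly to $\dim(B\cap C\cap K) = \dim B + \dim C - \dim A$, which is the numerical part of (e).

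For (a) $\Leftrightarrow$ (c), the engine is the following pointwise identity, valid for any subspace $F\subset A$:
\[
u + \iota_u\omega \in B + \mathcal{R}_{\omega}(F) \;\Longleftrightarrow\; u \in F + B\cap K.
\]
This is a quick calculation: writing $u + \iota_u\omega = b + (v + \iota_v\omega)$ with $b\in B$, $v\in F$ forces $\iota_b\omega=0$, hence $b\in B\cap K$, and the converse is immediate. Applying this identity to each of the two inclusions making up (a), namely $\mathcal{R}_\omega(B^\omega)\subset B+\mathcal{R}_\omega(C)$ and $\mathcal{R}_\omega(C)\subset B+\mathcal{R}_\omega(B^\omega)$, translates them into $B^\omega \subset C + B\cap K$ and $C \subset B^\omega + B\cap K = B^\omega$ respectively. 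The second inclusion is $\omega(B,C)=0$, and combined with $B\cap K\subset B^\omega$ it upgrades to $C + B\cap K \subset B^\omega$. Together with the first, this is precisely (c).

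The work is essentially linear algebra, and I do not expect a genuine obstacle; the only delicate point is bookkeeping the asymmetric roles played by $B$ (sitting in $A\oplus 0$) and $\mathcal{R}_\omega(F)$ (sitting on the tilted graph), which is handled uniformly by the single identity above. Once the universal inclusion $B\cap K\subset B^\omega$ is noted, every piece lines up, and the symmetry argument disposes of (b) and (d) without further effort.
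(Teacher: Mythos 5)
Your proposal is correct. It shares the paper's two computational ingredients --- the formula $\dim B^{\omega}=\dim A-\dim B+\dim(B\cap K)$ and the unconditional inclusion $B\cap K\subset B^{\omega}$ --- and your proof of $(c)\Leftrightarrow(e)$, as well as the $(B,\omega)\leftrightarrow(C,-\omega)$ symmetry disposing of $(b)$ and $(d)$, coincides with what the paper does (its ``similarly'' is exactly this symmetry). The genuine difference is in how condition $(a)$ is handled. The paper first notes that every condition forces $\omega(B,C)=0$ and then proves $(a)\Leftrightarrow(e)$ by a dimension count: $B+\mathcal{R}_{\omega}(B^{\omega})$ is a Lagrangian subspace of $A\oplus A^{*}$, hence of dimension $\dim A$, while $\dim\bigl(B+\mathcal{R}_{\omega}(C)\bigr)=\dim B+\dim C-\dim(B\cap K\cap C)$, so the inclusion $B+\mathcal{R}_{\omega}(C)\subset B+\mathcal{R}_{\omega}(B^{\omega})$ is an equality exactly when the numerical condition of $(e)$ holds. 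You instead prove $(a)\Leftrightarrow(c)$ via the membership identity $u+\iota_{u}\omega\in B+\mathcal{R}_{\omega}(F)\Leftrightarrow u\in F+B\cap K$, which converts each of the two inclusions constituting $(a)$ into an inclusion of subspaces of $A$; this is correct (the identity is a two-line check, since matching the covector parts forces the $B$-component into $B\cap K$) and has the advantage of not invoking the Lagrangian property of $B+\mathcal{R}_{\omega}(B^{\omega})$, which the paper takes for granted from its earlier identification $V+\mathrm{Gr}(\omega)\cap V^{\perp}=V+\mathcal{R}_{\omega}(V^{\omega})$. The paper's count, in exchange, directly produces the formula for $\dim\bigl(B+\mathcal{R}_{\omega}(C)\bigr)$, which is the quantity behind the rank conditions used elsewhere in that section. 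Either way, the lemma is fully established.
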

\begin{proof}
Note first that all conditions imply $\omega(B,C)=0$. Next, note that:
\[\mathrm{dim}(B^{\omega})=\dim(A)-\mathrm{dim}(B)+\mathrm{dim}(B\cap K),\]
\[\mathrm{dim}(C+B\cap K)=\mathrm{dim}(C)+\mathrm{dim}(B\cap K)-\mathrm{dim}(B\cap K\cap C).\]
Clearly, $C+B\cap K\subset B^{\omega}$; thus the two are equal iff they have the same dimension. This shows that $(c)\Leftrightarrow(e)$. Similarly, $(d)\Leftrightarrow(e)$.

On the other hand, $B+\mathcal{R}_{\omega}(C)\subset B+\mathcal{R}_{\omega}(B^{\omega})$. The latter is a Lagrangian subspace of $A\oplus A^*$, thus $(a)$ holds iff
\[\mathrm{dim}(B+\mathcal{R}_{\omega}(C))=\mathrm{dim}(A),\]
which is equivalent to $(e)$, because
\[\mathrm{dim}(B+\mathcal{R}_{\omega}(C))=\mathrm{dim}(B)+\mathrm{dim}(C)-\mathrm{dim}(B\cap K\cap C).\]
Hence $(a)\Leftrightarrow (e)$. Similarly, $(b)\Leftrightarrow (e)$.
\end{proof}

Motivated by Corollary \ref{cor : forms} $iii)$ and Lemma \ref{lem : equivalent weak}, we introduce:
\begin{definition}\label{def : weak dual pair}
A {\bf weak dual pair} consists of surjective, forward Dirac submersions
\begin{align}\label{eq:wdp}
(M_0,L_0)\stackrel{\bs}{\lmap} (\Sigma,\mathrm{Gr}(\omega)) \stackrel{\bt}{\rmap} (M_1,-L_{1}),
\end{align}
where $\omega$ is a closed two-form on $\Sigma$, satisfying
\[\tag{$*$}\omega(V,W)=0\quad \textrm{and}\quad \mathrm{rank}(V\cap K\cap W)=\mathrm{dim}(\Sigma)-\mathrm{dim}(M_0)-\mathrm{dim}(M_1),\]
where $V:=\ker\bs_*$, $W:=\ker \bt_*$, and $K:=\ker\omega$.
\end{definition}

A first consequence of this definition is that
\begin{lemma}\label{lem : VWK}
For a weak dual pair, we have that $V\cap K\cap W$ is a smooth, involutive subbundle of $T\Sigma$, where we used the notation of Definition \ref{def : weak dual pair}.
\end{lemma}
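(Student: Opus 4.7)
The plan is to treat smoothness and involutivity separately. For smoothness, the key step is to recognize, via Lemma \ref{lem : equivalent weak}, that $V \cap K \cap W$ is the intersection of two manifestly smooth subbundles of $\TT\Sigma$ whose sum has constant rank, forcing smoothness of the intersection. Involutivity then reduces, after using involutivity of $V$ and $W$ as kernels of submersions, to a short Cartan-calculus computation exploiting $\dd\omega=0$.

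For smoothness, I would first observe that, since $\bs:(\Sigma,\mathrm{Gr}(\omega)) \to (M_0,L_0)$ is a forward Dirac submersion, the Lagrangian family $\mathrm{Gr}(\omega)^{\bs}=\bs^!(L_0)$ is a smooth Dirac structure on $\Sigma$ by \S 10 and \S 12. The rank condition in $(*)$ of Definition \ref{def : weak dual pair} is precisely condition $(e)$ of Lemma \ref{lem : equivalent weak} applied pointwise with $B=V$ and $C=W$, and its equivalent form $(a)$ then yields the identity of linear families
\[\mathrm{Gr}(\omega)^{\bs} \;=\; V + \mathcal{R}_{\omega}(W).\]
Since $V$ and $\mathcal{R}_{\omega}(W)$ are smooth vector subbundles of $\TT\Sigma$, and their sum --- the Dirac structure $\mathrm{Gr}(\omega)^{\bs}$ --- is a smooth subbundle of constant rank $\dim\Sigma$, the standard fact that a vector bundle map of constant rank has smooth kernel, applied to $V\oplus\mathcal{R}_{\omega}(W)\to\TT\Sigma$, $(v,a)\mapsto v-a$, forces $V \cap \mathcal{R}_{\omega}(W)$ to be a smooth subbundle. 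A pointwise check identifies this intersection with $V\cap K\cap W$: a vector common to $V$ and $\mathcal{R}_\omega(W)$ is of the form $v + 0 = w + \iota_w\omega$, which forces $v = w \in V\cap W$ and $\iota_w\omega=0$.

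For involutivity, let $u,u' \in \Gamma(V \cap K \cap W)$. Since $V$ and $W$ are involutive as kernels of submersions, $[u,u'] \in \Gamma(V \cap W)$. To show $[u,u'] \in \Gamma(K)$, I would invoke the identity
\[\iota_{[u,u']}\omega \;=\; \Lie_u \iota_{u'}\omega - \iota_{u'}\Lie_u \omega,\]
in which the first summand vanishes because $\iota_{u'}\omega=0$, and the second because $\Lie_u\omega = \iota_u \dd\omega + \dd\iota_u\omega = 0$ (using $\dd\omega=0$ and $\iota_u\omega=0$).

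The main (rather modest) subtlety is the smoothness step: one must spot that Lemma \ref{lem : equivalent weak}(a) upgrades the naive pointwise identification $V\cap K\cap W = V\cap\mathcal{R}_\omega(W)$ to a statement about an intersection of two smooth subbundles whose sum has constant rank. Once this observation is in place, both halves of the argument are essentially formal.
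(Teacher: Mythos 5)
Your proof is correct and follows essentially the same route as the paper: the paper also realizes $V\cap K\cap W$ as the kernel of the bundle map $V\oplus W\to\TT\Sigma$, $(v,w)\mapsto v+\mathcal{R}_{\omega}(w)$, whose image $V+\mathcal{R}_{\omega}(W)$ is Lagrangian by Lemma \ref{lem : equivalent weak}~(a) and hence of constant rank, and then deduces involutivity from that of $V$, $W$, and $K=\ker\omega$ (the latter because $\dd\omega=0$, which is exactly your Cartan-calculus step). The only cosmetic difference is that you route the constant-rank claim through the smoothness of $\bs^!(L_0)$, whereas the Lagrangian condition alone already suffices.
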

\begin{proof}
Note that the kernel of the vector bundle map
\[V\oplus W\rmap \TT\Sigma, \ \ v\oplus w\mapsto v+\mathcal{R}_{\omega}(w)\]
can be identified with $V\cap K\cap W$. This map has constant rank, because its image is $V+\mathcal{R}_{\omega}(W)$, which, by Lemma \ref{lem : equivalent weak} (a) is Lagrangian. Therefore $V\cap K\cap W$ is smooth. The bundles $V$ and $W$ are involutive, and since $\omega$ is closed, also $K$ is involutive in the sense of \S 7. Thus $V\cap K\cap W$ is involutive.
\end{proof}

By Lemma \ref{lem : equivalent weak}, condition ($*$) in Definition \ref{def : weak dual pair} is the same as condition (\ref{eq : important}) in Corollary \ref{cor : forms}. Due to the symmetric role played by $V$ and $W$, we may apply Corollary \ref{cor : forms} $iii)$ and obtain an intrinsic description of weak dual pairs:
\begin{corollary}
Let $\omega \in \Omega^2(\Sigma)$ be a closed two-form, and let $\bs:\Sigma\to M_0$ and $\bt:\Sigma\to M_1$ be surjective submersions with connected fibres. If $V:=\ker\bs_*$ and $W:=\ker\bt_*$ satisfy $(*)$, then $M_0$ and $M_1$ carry Dirac structures $L_0$ and $L_1$ respectively, which yield a weak dual pair
\[(M_0,L_0)\stackrel{\bs}{\lmap} (\Sigma,\mathrm{Gr}(\omega)) \stackrel{\bt}{\rmap} (M_1,-L_{1}).\]
\end{corollary}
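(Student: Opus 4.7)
The plan is to apply Corollary \ref{cor : forms}~$iii)$ separately to each of the submersions $\bs$ and $\bt$, using the algebraic equivalences of Lemma \ref{lem : equivalent weak} to convert the symmetric condition $(*)$ into the two asymmetric identities needed to produce the subbundle $W$ (resp.\ $V$) that Corollary \ref{cor : forms}~$iii)$ requires. This is essentially a bookkeeping exercise: the whole point of Lemma \ref{lem : equivalent weak} was to show that $(*)$ is the symmetric reformulation of the condition (\ref{eq : important}), so most of the work has already been done.

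First I would read $(*)$ fibrewise as condition $(e)$ of Lemma \ref{lem : equivalent weak} with $A = T_p\Sigma$, $B = V_p$, $C = W_p$, and $\omega = \omega_p$. Lemma \ref{lem : equivalent weak} then gives the pointwise identity $V + \mathcal{R}_{\omega}(V^{\omega}) = V + \mathcal{R}_{\omega}(W)$, which by (\ref{eq: lagr fam}) amounts to
\[
\mathrm{Gr}(\omega)^{\bs} \;=\; V + \mathcal{R}_{\omega}(W).
\]
Since $\bt$ is a submersion, $W = \ker \bt_*$ is a smooth involutive subbundle of $T\Sigma$. Hence Corollary \ref{cor : forms}~$iii)$ applies and produces a Dirac structure $L_0$ on $M_0$ such that $\bs : (\Sigma,\mathrm{Gr}(\omega))\to (M_0,L_0)$ is a forward Dirac submersion.

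Because condition $(e)$ of Lemma \ref{lem : equivalent weak} is symmetric in $B$ and $C$, I may simply swap the roles of $V$ and $W$ and repeat the argument: the same lemma yields
\[
\mathrm{Gr}(\omega)^{\bt} \;=\; W + \mathcal{R}_{\omega}(V),
\]
and now $V = \ker \bs_*$ is the smooth involutive subbundle playing the role of $W$ in Corollary \ref{cor : forms}~$iii)$. Applying that corollary to $\bt$ produces a Dirac structure on $M_1$, which we declare to be $-L_1$ (equivalently $L_1 := -\bt_!(\mathrm{Gr}(\omega))$, in the rescaling notation of \S 10). Then $\bt:(\Sigma,\mathrm{Gr}(\omega))\to (M_1,-L_1)$ is a forward Dirac submersion by construction, and by hypothesis the surjectivity, connectedness of the fibres, and $(*)$ itself all hold, so (\ref{eq:wdp}) is a weak dual pair. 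There is no real obstacle here beyond correctly reading off the symmetry of Lemma \ref{lem : equivalent weak} and noting that $\ker\bs_*$ and $\ker\bt_*$ are automatically smooth and involutive.
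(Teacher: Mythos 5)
Your proposal is correct and follows exactly the paper's (very brief) justification: Lemma \ref{lem : equivalent weak} converts the symmetric condition $(*)$ into $\mathrm{Gr}(\omega)^{\bs}=V+\mathcal{R}_{\omega}(W)$ and, by the $B\leftrightarrow C$ symmetry, into $\mathrm{Gr}(\omega)^{\bt}=W+\mathcal{R}_{\omega}(V)$, after which Corollary \ref{cor : forms}~$iii)$ is applied to each leg using the smooth involutive subbundles $\ker\bt_*$ and $\ker\bs_*$ respectively. No gaps.
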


Next we give some alternative descriptions of weak dual pairs.

\begin{proposition}\label{pro : equivalent wdp}
Let $\omega \in \Omega^2(\Sigma)$ be a closed two-form, $(M_0,L_0)$ and $(M_1,L_1)$ be Dirac manifolds and $\bs:\Sigma\to M_0$ and $\bt:\Sigma\to M_1$ be surjective submersions. The following are equivalent:
\begin{enumerate}[i)]
\item the diagram below is a weak dual pair:
\[(M_0,L_0)\stackrel{\bs}{\lmap} (\Sigma,\mathrm{Gr}(\omega)) \stackrel{\bt}{\rmap} (M_1,-L_{1});\]
\item $\bs^!(L_0) = \mathcal{R}_{\omega}(\bt^!(L_1))$ and $\mathrm{rank}(V\cap K\cap W)=\mathrm{dim}(\Sigma)-\mathrm{dim}(M_0)-\mathrm{dim}(M_1)$;
 \item $(\bs,\bt):(\Sigma,\mathrm{Gr}(\omega)) \to (M_0,L_0) \times (M_1,-L_1)$ is a forward Dirac map and either $\mathrm{rank}(V\cap K\cap W)=\mathrm{dim}(\Sigma)-\mathrm{dim}(M_0)-\mathrm{dim}(M_1)$ or $\omega(V,W)=0$ holds;
\item $\bs^!(L_0) = \mathcal{R}_{\omega}(\bt^!(L_1))$ and $(\bs,\bt):(\Sigma,\mathrm{Gr}(\omega)) \to (M_0,L_0) \times (M_1,-L_1)$ is a forward Dirac map,
\end{enumerate}
where $V:=\ker\bs_*$, $W:=\ker\bt_*$, and $K:=\ker\omega$.
\end{proposition}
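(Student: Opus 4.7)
The plan is to close the loop $(i) \Leftrightarrow (ii) \Leftrightarrow (iv) \Leftrightarrow (iii)$ using two observations. First, combining Proposition~\ref{prop: Libermann simple} with \eqref{eq:L^bs} and \eqref{eq: lagr fam}, the map $\bs$ is forward Dirac iff $\bs^!(L_0) = \mathrm{Gr}(\omega)^{\bs} = V + \mathcal{R}_\omega(V^\omega)$, and analogously, using $\bt^!(-L_1) = -\bt^!(L_1)$, the map $\bt$ is forward to $(M_1, -L_1)$ iff $\bt^!(L_1) = W + \mathcal{R}_{-\omega}(W^\omega)$. Second, condition $(*)$ in Definition~\ref{def : weak dual pair} is precisely part (e) of Lemma~\ref{lem : equivalent weak} applied to $B=V$, $C=W$, so under $(*)$ parts (a) and (b) of that lemma rewrite these identities as $\bs^!(L_0) = V + \mathcal{R}_\omega(W)$ and $\bt^!(L_1) = W + \mathcal{R}_{-\omega}(V)$.

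The equivalence $(i) \Leftrightarrow (ii)$ now falls out quickly. For $(i) \Rightarrow (ii)$: the identities above yield $\mathcal{R}_\omega(\bt^!(L_1)) = \mathcal{R}_\omega(W) + V = \bs^!(L_0)$, while the rank condition is shared. For $(ii) \Rightarrow (i)$: note that $V \subset \bs^!(L_0)$ trivially, and that $\mathcal{R}_\omega(W) \subset \mathcal{R}_\omega(\bt^!(L_1)) = \bs^!(L_0)$ since $W \subset \bt^!(L_1)$; hence $V + \mathcal{R}_\omega(W)$ sits inside the Lagrangian $\bs^!(L_0)$ and is isotropic, and the pairing $\la v, w+\iota_w\omega \ra = \omega(w,v)$ delivers $\omega(V,W)=0$. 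Together with the rank condition, Lemma~\ref{lem : equivalent weak} $(e) \Rightarrow (a)$ promotes this to $V + \mathcal{R}_\omega(W) = V + \mathcal{R}_\omega(V^\omega) = \mathrm{Gr}(\omega)^{\bs}$; a dimension count then forces equality with $\bs^!(L_0)$, so $\bs$ is forward by the first observation, and symmetrically for $\bt$.

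For $(i) \Rightarrow (iv)$ the identity is already part of $(ii)$. To verify that $(\bs,\bt)$ is forward Dirac, given $(v_0+\xi_0, v_1+\xi_1) \in L_0 \times (-L_1)$ and any lift $u_0 \in T\Sigma$ of $v_0$, I decompose $u_0 + \bs^*\xi_0 \in \bs^!(L_0) = V + \mathcal{R}_\omega(W)$ to extract $w \in W$ with $\bs_*w=v_0$ and $\iota_w\omega = \bs^*\xi_0$; a symmetric argument, using $\bt^!(-L_1) = W + \mathcal{R}_\omega(V)$ (which follows from Lemma~\ref{lem : equivalent weak} with $B, C$ swapped), produces $v' \in V$ with $\bt_*v'=v_1$ and $\iota_{v'}\omega = \bt^*\xi_1$, so $u := v' + w$ satisfies all three required conditions. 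The implication $(iv) \Rightarrow (iii)$ is then immediate: the forward condition is in $(iv)$, and the same isotropy argument as in $(ii)\Rightarrow(i)$ extracts $\omega(V,W)=0$.

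The principal obstacle is $(iii) \Rightarrow (i)$, where only \emph{one} half of $(*)$ is at hand. Functoriality of $\varphi_!$, together with the identity $(\pi_i)_!(L_0 \times (-L_1)) = L_0$, resp.\ $-L_1$, forces $\bs$ and $\bt$ individually to be forward. Testing the forward condition of $(\bs,\bt)$ on elements of the form $(v_0 + \xi_0, 0)$ produces, for each $v_0+\xi_0 \in L_0$, some $u \in W$ with $\bs_*u = v_0$ and $\iota_u\omega = \bs^*\xi_0$; since $\iota_u\omega \in \im(\bs^*)$ exactly when $u \in V^\omega$, this yields a surjection $\mu : W \cap V^\omega \to L_0$ with kernel $V \cap W \cap K$, and symmetrically a surjection $\nu : V \cap W^\omega \to -L_1$ with the same kernel. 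If $\omega(V, W) = 0$, then $W \subset V^\omega$, and rank-nullity for $\mu$ produces $\dim(V \cap W \cap K) = \dim W - \dim M_0 = \dim \Sigma - \dim M_0 - \dim M_1$, the missing rank condition; if instead the rank condition holds, rank-nullity for $\nu$ forces $\dim(V \cap W^\omega) = \dim V$, so $V \subset W^\omega$, which is $\omega(V,W)=0$. Either way $(*)$ is recovered, closing the proof.
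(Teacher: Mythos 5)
Your proof is correct, and while its skeleton overlaps substantially with the paper's --- both hinge on Lemma~\ref{lem : equivalent weak}, on the identities $\bs^!(L_0)=\mathrm{Gr}(\omega)^{\bs}=V+\mathcal{R}_{\omega}(W)$ and $\bt^!(L_1)=\mathrm{Gr}(\omega)^{\bt}=W+\mathcal{R}_{-\omega}(V)$ under $(*)$, and on the same explicit construction $u=v'+w$ of a $(\bs,\bt)$-related element (which is exactly the content of the paper's preliminary claim that $(\bs,\bt)$ is forward iff $\bs^!(L_0)\subset V+\mathcal{R}_{\omega}(W)$ and $\bt^!(L_1)\subset \mathcal{R}_{-\omega}(V)+W$) --- you genuinely diverge on the implication out of $iii)$. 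The paper proves $iii)\Rightarrow ii)$ by observing that either alternative hypothesis forces $V+\mathcal{R}_{\omega}(W)$ to have rank exactly $\dim\Sigma$, so that the claim's inclusion of the Lagrangian $\bs^!(L_0)$ in it becomes an equality. You instead prove $iii)\Rightarrow i)$ directly: you first deduce that $\bs$ and $\bt$ are individually forward from functoriality of push-forward and $(\pi_i)_!(L_0\times -L_1)=L_0$, resp.\ $-L_1$, and then recover $(*)$ by a rank--nullity count on the pointwise surjections $\mu:W\cap V^{\omega}\to L_0$ and $\nu:V\cap W^{\omega}\to -L_1$, both with kernel $V\cap K\cap W$. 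This is slightly more computational than the paper's route but has the virtue of making completely transparent why each of the two alternative hypotheses in $iii)$ supplies the other half of $(*)$; the paper's version is shorter because it recycles the claim's inclusions and then passes through $ii)$. Both arguments are complete, and your cycle $i)\Rightarrow iv)\Rightarrow iii)\Rightarrow i)$ together with $i)\Leftrightarrow ii)$ establishes all four equivalences.
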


\begin{proof}
First, we claim that $(\bs,\bt)$ being forward Dirac is equivalent to the inclusions:
\begin{equation}\label{F2}
\bs^!(L_0) \subset V+\mathcal{R}_{\omega}(W), \quad \bt^!(L_1) \subset \mathcal{R}_{-\omega}(V)+W.
\end{equation}
Assume that $(\bs,\bt)$ is a forward Dirac map. Let $\widetilde{u}_0+\bs^*(\xi_0) \in \bs^!(L_0)_p$ be $\bs$-related to $u_0+\xi_0 \in L_{0,\bs(p)}$. By assumption, for all $(u_0+\xi_0,u_1-\xi_1) \in L_{0,\bs(p)} \times -L_{1,\bt(p)}$, there is $u \in T_p\Sigma$ such that $u+\iota_u\omega \sim_{(\bs,\bt)} (u_0+\xi_0,u_1-\xi_1)$. Applying this to $(u_0+\xi_0,0)$, we deduce the existence of $w \in W_p$ such that $w+\iota_w\omega \sim_{\bs} u_0+\xi_0$; hence
\[
 \widetilde{u}_0 = w+v, \quad \iota_w\omega = \bs^*(\xi_0)
\]
for some $v \in V_p$. Therefore
\[
\widetilde{u}_0+\bs^*(\xi_0) = v+w+\iota_w\omega \in V+\mathcal{R}_{\omega}(W),\]
which shows that $\bs^!(L_0) \subset V+\mathcal{R}_{\omega}(W)$. The inclusion $\bt^!(L_1) \subset \mathcal{R}_{-\omega}(V)+W$ is proved similarly. Hence (\ref{F2}) holds.

Conversely, assume that (\ref{F2}) holds. Then for $(u_0+\xi_0,u_1-\xi_1) \in L_{0,\bs(p)} \times -L_{1,\bt(p)}$ there exist $v_0,v_1 \in V_p$ and $w_0,w_1 \in W_p$, such that
\[
 v_0+w_0+\iota_{w_0}\omega \sim_{\bs} u_0+\xi_0, \quad v_1+w_1+\iota_{v_1}\omega \sim_{\bt} u_1-\xi_1;
\]this implies that
\[
 w_0+\iota_{w_0}\omega \sim_{\bs} u_0+\xi_0, \quad v_1+\iota_{v_1}\omega \sim_{\bt} u_1-\xi_1,
\]
and so $v_1+w_0+\iota_{v_1+w_0}\omega \sim_{(\bs,\bt)} (u_0+\xi_0,u_1-\xi_1)$; hence $(\bs,\bt)$ is forward Dirac.

Assume now that $i)$ holds. Then by Lemma \ref{lem : equivalent weak}, the following equalities below
\[\bs^!(L_0) =(\mathrm{Gr}(\omega))^{\bs} =  V+\mathcal{R}_{\omega}(W), \quad \bt^!(L_1) = (\mathrm{Gr}(\omega))^{\bt}= \mathcal{R}_{-\omega}(V)+W,
\]hold, and imply both the condition $\bs^!(L_0) = \mathcal{R}_{\omega}(\bt^!(L_1))$ and the inclusions (\ref{F2}). Since condition $(*)$ is implied by $i)$, we conclude that $i)$ implies both $ii)$ and $iii)$.

Assume now that $ii)$ holds. Since $V\subset \bs^!(L_0)$ and $W\subset \bt^{!}(L_1)$, $ii)$ yields
\[V+\mathcal{R}_{\omega}(W) \subset \bs^!(L_0)=\mathcal{R}_{\omega}(\bt^!(L_1)).\]
The rank condition in $ii)$ implies that $\mathrm{rank}(V+\mathcal{R}_{\omega}(W))=\mathrm{dim}(\Sigma)$, and so the above must hold with equality
\[V+\mathcal{R}_{\omega}(W) = \bs^!(L_0)=\mathcal{R}_{\omega}(\bt^!(L_1)).\]The fact that this space is isotropic gives $\omega(V,W)=0$, hence $V+\mathcal{R}_{\omega}(W)\subset V+\mathcal{R}_{\omega}(V^{\omega})\subset\mathrm{Gr}(\omega)^{\bs}$. Again, since both spaces are Lagrangian, we conclude that
\[V+\mathcal{R}_{\omega}(W)=\mathrm{Gr}(\omega)^{\bs}=\bs^!(L_0).\]
This implies that $\bs$ is forward Dirac, and similarly, one shows that $\bt$ is forward Dirac. Thus $ii)$ implies $i)$.

Assume now that $iii)$ holds. By the claim, (\ref{F2}) holds. The condition $\omega(V,W)=0$ implies that $V+\mathcal{R}_{\omega}(W)$ is isotropic, so $\mathrm{rank}(V+\mathcal{R}_{\omega}(W))\leq \mathrm{dim}(\Sigma)$, and the rank condition from $iii)$ implies that $\mathrm{rank}(V+\mathcal{R}_{\omega}(W))= \mathrm{dim}(\Sigma)$. Assuming that either $\mathrm{rank}(V\cap K\cap W)=\mathrm{dim}(\Sigma)-\mathrm{dim}(M_0)-\mathrm{dim}(M_1)$ or $\omega(V,W)=0$ yields equalities in (\ref{F2}), and so $\bs^!(L_0) = \mathcal{R}_{\omega}(\bt^!(L_1))$ and $\mathrm{rank}(V+\mathcal{R}_{\omega}(W))= \mathrm{dim}(\Sigma)$, which is equivalent to the rank condition in $ii)$. Hence, $iii)$ implies $ii)$.

Clearly, by the discussion above, $i)$ implies $iv)$. Finally, assume that $iv)$ holds. Note that $\bs^!(L_0) = \mathcal{R}_{\omega}(\bt^!(L_1))$ implies that $V+\mathcal{R}_{\omega}(W)\subset \bs^!(L_0)$; hence $V+\mathcal{R}_{\omega}(W)$ is isotropic, and so $\omega(V,W)=0$. Thus $iii)$ holds.
\end{proof}

\begin{remark}\label{remark : pre dual pair condition}
In \cite{BursztynRadko} (see also \cite[Lemma 4.2]{BCWZ}), a different version of weak dual pairs is considered. Namely, the diagram (\ref{eq:wdp}) from Definition \ref{def : weak dual pair} is called a \emph{pre-dual pair} if instead of $(*)$ it satisfies (see \cite[Definition 3.1]{BursztynRadko}):
\begin{align}\tag{$**$}
V^{\omega} = W+K\ \ \textrm{and}\ \ W^{\omega}=V+K.
\end{align}
Note that these two conditions are equivalent: by taking the $\omega$-orthogonal of the first relation one obtains the second. Weak dual pairs satisfy $(**)$ (see (c) in Lemma \ref{lem : equivalent weak}), but not conversely (see Example \ref{example : co-dirac product is not smooth} with $W=V=\ker \bs_*$, and also Example \ref{example : pre-dual pair but not weak dual pair}).

That condition $(**)$ is not sufficient for $\bs$ to push the Dirac structure $\mathrm{Gr}(\omega)$ forward should be contrasted to the setting in \cite[Lemma 4.2]{BCWZ} -- where $(**)$ is enough to ensure that a multiplicative two-form on a Lie groupoid pushes forward via the source map.
\end{remark}

Below are two examples of diagrams of surjective, forward Dirac submersions,
as in \ref{eq:wdp}, which are not weak dual pairs.

\begin{example}\label{example : not pre-dual, (s,t) forward}
Let $\bs:(\R^3,\mathrm{Gr}(\omega))\rmap (\R,T\R)$ be as in Example \ref{ex : assumption not satisfied}, and consider $\bt:(\R^3,\mathrm{Gr}(\omega)) \to (\R^0,0)$, which is trivially a forward map into the trivial Dirac structure on $\R^0$. Then\[(\R,T\R) \stackrel{\bs}{\lmap} (\Sigma,\mathrm{Gr}(\omega)) \stackrel{\bt}{\rmap} (\R^0,0)\]is a diagram of surjective, forward Dirac submersions, in which \[(\bs,\bt):(\Sigma,\mathrm{Gr}(\omega)) \to (\R,T\R) \times (\R^0,0)\]is forward Dirac. However, $\bs^!(L_0) \neq \mathcal{R}_{\omega}(\bt^!(L_1))$. Clearly, the fibres of $\bs$ and $\bt$ are not $\omega$-orthogonal.
\end{example}

\begin{example}\label{example : pre-dual pair but not weak dual pair}
Consider the forward Dirac submersions
 \[(\R^2,L_0) \stackrel{\bs}{\lmap} (\R^3,\mathrm{Gr}(\omega)) \stackrel{\bt}{\rmap} (\R^2,-L_1),\]
 where $\bs(x,y,z):=(x,y)$, $\bt(x,y,z):=(y,z)$, $\omega:= \dd x \wedge \dd y + \dd y \wedge \dd z$, $L_0:=\langle \partial_x, \dd y \rangle$ and $L_1:=\langle \partial_z, \dd y \rangle$. In this case, $\bs^!(L_0) = \mathcal{R}_{\omega}(\bt^!(L_1))=\langle \partial_x, \partial_z, \dd y \rangle$. However, $(\bs,\bt):(\R^3,\mathrm{Gr}(\omega)) \to (\R^2,L_0) \times (\R^2,-L_1)$ is not forward Dirac. Moreover, the fibres of $\bs$ and $\bt$ are $\omega$-orthogonal.

This is also an example of a pre-dual pair (see Remark \ref{remark : pre dual pair condition}) which is not a weak dual pair, i.e.\ condition $(**)$ holds, but condition $(*)$ (Lemma \ref{lem : equivalent weak} $(c)$) fails:
 \[
  V^{\omega} = W + K\neq W + V \cap K. 
 \]
\end{example}

\subsection*{Dual pairs}
Next, we introduce the main notion of this paper:

\begin{definition}\label{def : dual pair}
A \textbf{dual pair} is a weak dual pair
\[(M_0,L_0)\stackrel{\bs}{\lmap} (\Sigma,\mathrm{Gr}(\omega)) \stackrel{\bt}{\rmap} (M_1,-L_{1}),\]
satisfying the condition
\[\tag{${\davidsstar}$} V\cap K\cap W=0,\]
where $V:=\ker\bs_*$, $W:=\ker \bt_*$ and $K:=\ker\omega$. 

Equivalently, by $(*)$, a dual pair is weak dual pair satisfying:
\[\mathrm{dim}(\Sigma)=\mathrm{dim}(M_0)+\mathrm{dim}(M_1).\]
\end{definition}

Analogously to Lemma \ref{lem : equivalent weak}, the algebraic conditions appearing in the definition of dual pairs can be reformulated as follows:
\begin{lemma}\label{lema: equivalent dual}
In the setting of Lemma \ref{lem : equivalent weak}, the following are equivalent
\begin{enumerate}[$(a)$]
\item $B+\mathcal{R}_{\omega}(B^{\omega})=B\oplus \mathcal{R}_{\omega}(C)$;
\item $C+\mathcal{R}_{-\omega}(C^{\omega})=C\oplus\mathcal{R}_{-\omega}(B)$;
\item $B^{\omega}=C\oplus B\cap K$;
\item $C^{\omega}=B\oplus C\cap K$;
\item $\omega(B,C)=0$ and $B\cap K\cap C=0$ and $\mathrm{dim}(A)=\mathrm{dim}(B)+\mathrm{dim}(C)$.
\end{enumerate}
\end{lemma}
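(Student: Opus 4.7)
The plan is to reduce everything to Lemma \ref{lem : equivalent weak}, which already supplies the chain of equivalences between the ``non-direct'' analogues of $(a)$--$(e)$. The extra content to extract here is the relationship between the ``directness'' of the sum decompositions in $(a)$--$(d)$ and the vanishing condition $B\cap K\cap C=0$ together with the dimension equality $\dim(A)=\dim(B)+\dim(C)$ in $(e)$.

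Concretely, I would first observe that condition $(e)$ of the present lemma is a strengthening of condition $(e)$ of Lemma \ref{lem : equivalent weak}: if $\omega(B,C)=0$, $B\cap K\cap C=0$ and $\dim(A)=\dim(B)+\dim(C)$, then trivially $\dim(B\cap K\cap C)=0=\dim(B)+\dim(C)-\dim(A)$. Thus one invokes Lemma \ref{lem : equivalent weak} to obtain the weaker forms of $(a)$--$(d)$ with $+$ in place of $\oplus$; it then only remains to upgrade each sum to a direct sum. For $(a)$, an element of $B\cap \mathcal{R}_{\omega}(C)$ is of the form $b=c+\iota_c\omega$ with $b\in B$, $c\in C$, which forces $\iota_c\omega=0$ (so $c\in K$) and then $b=c\in B\cap K\cap C=0$. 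For $(c)$, the intersection $C\cap (B\cap K)$ is contained in $B\cap K\cap C=0$. The arguments for $(b)$ and $(d)$ are entirely symmetric.

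Conversely, assuming any one of $(a)$--$(d)$, Lemma \ref{lem : equivalent weak} gives $\omega(B,C)=0$ and the dimension identity $\dim(B\cap K\cap C)=\dim(B)+\dim(C)-\dim(A)$. To recover $(e)$ it suffices to show that both quantities vanish. Using $(a)$, for instance, the direct sum $B\oplus \mathcal{R}_{\omega}(C)$ has dimension $\dim(B)+\dim(C)$; but this sum equals $B+\mathcal{R}_{\omega}(B^{\omega})$, which is Lagrangian in $A\oplus A^*$, hence of dimension $\dim(A)$. Therefore $\dim(A)=\dim(B)+\dim(C)$, and the identity above forces $\dim(B\cap K\cap C)=0$. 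The same reasoning applies with minor modifications to $(b)$, $(c)$, $(d)$: in the latter two, the direct sum $C\oplus (B\cap K)$ (resp.\ $B\oplus (C\cap K)$) forces $B\cap K\cap C=0$ directly, and then comparing dimensions against $\dim(B^{\omega})=\dim(A)-\dim(B)+\dim(B\cap K)$ yields $\dim(A)=\dim(B)+\dim(C)$.

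There is no serious obstacle here, as the result is essentially a bookkeeping addendum to Lemma \ref{lem : equivalent weak}; the only point requiring slight care is to verify, in each of $(a)$--$(d)$, that directness of the decomposition translates precisely into $B\cap K\cap C=0$ rather than some weaker vanishing. Once that observation is made, the equivalences follow by the same closed-form dimension count already used in the proof of Lemma \ref{lem : equivalent weak}.
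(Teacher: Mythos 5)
Your argument is correct: reducing to Lemma \ref{lem : equivalent weak} and then checking that directness of each sum is exactly equivalent to $B\cap K\cap C=0$ (together with the dimension count via the Lagrangian subspace $B+\mathcal{R}_{\omega}(B^{\omega})$) is precisely the intended argument — the paper states this lemma without proof as being ``analogous'' to Lemma \ref{lem : equivalent weak}. All the individual verifications (e.g.\ that $B\cap\mathcal{R}_{\omega}(C)$ consists of elements $b=c+\iota_c\omega$ forcing $c\in B\cap K\cap C$) check out.
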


Before giving further descriptions of dual pairs, we recall the following notion:

\begin{definition}
A forward Dirac map
\[\varphi : (M_0,L_0) \to (M_1,L_1)\]
is called {\bf strong} if $L_0 \cap \ker \varphi_* = 0$ \cite{ABM}. When $L_0$ is the graph of a closed two-form $\omega_0 \in \Omega^2(M_0)$, we call it a {\bf presymplectic realization} \cite[Definition 7.1]{BCWZ}.
\end{definition}


\begin{proposition}\label{pro : equivalent dual pair}
Consider a diagram of surjective submersions
\begin{align*}M_0 \stackrel{\bs}{\lmap} \Sigma \stackrel{\bt}{\rmap} M_1, \ \ \ \ \ \text{where} \ \  \dim \Sigma = \dim M_0 + \dim M_1.\end{align*}

Let $\omega \in \Omega^2(\Sigma)$ be a closed two-form, and let $L_i \subset \TT M_i$ be Dirac structures. Then the following conditions are equivalent:
\begin{enumerate}[i)]
\item the diagram below is a dual pair:
\[(M_0,L_0)\stackrel{\bs}{\lmap} (\Sigma,\mathrm{Gr}(\omega)) \stackrel{\bt}{\rmap} (M_1,-L_{1});\]
\item $\bs^!(L_0) = \mathcal{R}_{\omega}(\bt^!(L_1))$ and $V\cap K\cap W=0$;
\item $(\bs,\bt):(\Sigma,\mathrm{Gr}(\omega)) \to (M_0,L_0) \times (M_1,-L_1)$ is a presymplectic realization;
\end{enumerate}
where $V:=\ker\bs_*$, $W:=\ker\bt_*$, and $K:=\ker\omega$.
\end{proposition}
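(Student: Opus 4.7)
The plan is to leverage the weak-dual-pair characterizations of Proposition \ref{pro : equivalent wdp}, using the dimension hypothesis $\dim \Sigma = \dim M_0 + \dim M_1$ to convert the rank condition appearing there into the sharper vanishing $V \cap K \cap W = 0$ that distinguishes a dual pair from a weak dual pair. The first observation is that, under this dimension hypothesis, being a dual pair is the same as being a weak dual pair: condition $(*)$ in Definition \ref{def : weak dual pair} specializes to $\mathrm{rank}(V \cap K \cap W) = 0$, which is exactly the extra requirement imposed on a dual pair. Granted this, the equivalence $i)\Leftrightarrow ii)$ is immediate from Proposition \ref{pro : equivalent wdp} $i)\Leftrightarrow ii)$, since the rank condition there now reads $V \cap K \cap W = 0$.

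The heart of the argument is a pointwise identification. The kernel of $(\bs,\bt)_* \colon T\Sigma \to TM_0 \oplus TM_1$ is $V \cap W$, viewed as a subbundle of $T\Sigma \subset \TT \Sigma$. An element $u + \iota_u \omega \in \mathrm{Gr}(\omega)$ lies in $T\Sigma$ precisely when $\iota_u \omega = 0$, i.e.\ when $u \in K$. Hence, under the standard isomorphism $T\Sigma \cong \mathrm{Gr}(\omega)$, $u \mapsto u + \iota_u \omega$, one obtains
\[ \mathrm{Gr}(\omega) \cap \ker (\bs,\bt)_* \;\cong\; V \cap W \cap K. \]
In particular, for a forward Dirac map $(\bs,\bt): (\Sigma, \mathrm{Gr}(\omega)) \to (M_0, L_0) \times (M_1, -L_1)$, the strongness condition $\mathrm{Gr}(\omega) \cap \ker(\bs,\bt)_* = 0$ is equivalent to the dual pair condition $V \cap K \cap W = 0$.

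Equipped with this identification, the remaining equivalence $i)\Leftrightarrow iii)$ follows readily. From $i)$, Proposition \ref{pro : equivalent wdp} $i)\Rightarrow iv)$ yields that $(\bs,\bt)$ is forward Dirac, and the preceding calculation shows it is strong, proving $iii)$. Conversely, $iii)$ provides forward Diracness of $(\bs,\bt)$ and the vanishing $V \cap K \cap W = 0$; under the dimension hypothesis, this matches the rank condition of Proposition \ref{pro : equivalent wdp} $iii)$, whose implication $iii)\Rightarrow i)$ there yields a weak dual pair, hence a dual pair by the opening observation.

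The only step requiring genuine verification is the pointwise identification $\mathrm{Gr}(\omega) \cap \ker(\bs,\bt)_* \cong V \cap W \cap K$; after that, the argument reduces to bookkeeping on top of Proposition \ref{pro : equivalent wdp}. No new involutivity or smoothness arguments are needed, and the strong-Dirac vocabulary is the only genuinely new ingredient beyond what Proposition \ref{pro : equivalent wdp} already supplies.
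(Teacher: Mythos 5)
Your proof is correct and follows essentially the same route as the paper: both reduce the statement to Proposition \ref{pro : equivalent wdp} by observing that the dimension hypothesis collapses the rank condition to $V\cap K\cap W=0$, and both identify the strongness of $(\bs,\bt)$ with that vanishing via $\mathrm{Gr}(\omega)\cap\ker(\bs,\bt)_*\cong V\cap K\cap W$. Your write-up merely spells out that last pointwise identification, which the paper leaves implicit.
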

\begin{proof}
Note that, under the assumption $\dim \Sigma=\dim M_0+\dim M_1$, the first two items are equivalent to the corresponding items of Proposition \ref{pro : equivalent wdp}. Similarly, item $iii)$ is equivalent to $(\bs,\bt)$ being a forward Dirac map, and to $V\cap W\cap K=0$; thus it is equivalent to the first version of the corresponding item in Proposition \ref{pro : equivalent wdp}. Thus, the result follows from Proposition \ref{pro : equivalent wdp}.
\end{proof}

\begin{example}\label{ex : foliations have presymplectic realizations}
A foliation $\mathscr{F}$ on a smooth manifold $M$ admits a presymplectic realization. Indeed, let $L \subset \TT M$ be the Dirac structure corresponding to $\mathscr{F}$. Denote the conormal bundle of $\mathscr{F}$ by $N^*\mathscr{F} \subset T^*M$, the bundle projection by $\bs : N^*\mathscr{F} \to M$, and by $\omega \in \Omega^2(N^*\mathscr{F})$ the restriction of the canonical symplectic form on $T^*M$. Then
 \[
  \bs : (N^*\mathscr{F},\mathrm{Gr}(\omega)) \rmap (M,L)
 \]is a presymplectic realization.

Assume that the foliation $\mathscr{F}$ is given by the fibres of a surjective submersion $p:M \to B$. Then the above presymplectic realization is part of a dual pair
 \[
  (M,L) \stackrel{\bs}{\lmap} (N^*\mathscr{F},\mathrm{Gr}(\omega)) \stackrel{\bt}{\rmap} (B,T^*B), \quad \bt:=p \circ \bs.
 \]
\end{example}

\begin{example}
If $(M_0,L_0)\stackrel{\bs}{\lmap} (\Sigma,\mathrm{Gr}(\omega)) \stackrel{\bt}{\rmap} (M_1,-L_{1})$ is a dual pair, and $\sigma_0 \in \Omega^2(M_0)$, $\sigma_1 \in \Omega^2(M_1)$ are closed two-forms, then
 \begin{align*}(M_0,\mathcal{R}_{\sigma_0}(L_0))\stackrel{\bs}{\lmap} (\Sigma,\mathrm{Gr}(\omega+\bs^*(\sigma_0)-\bt^*(\sigma_1))) \stackrel{\bt}{\rmap} (M_1,-\mathcal{R}_{\sigma_1}(L_{1}))\end{align*}is again a dual pair, as follows by the description provided in Proposition \ref{pro : equivalent dual pair} $iii)$.
\end{example}

The main property of strong forward Dirac maps is contained in the following result from \cite[Proposition 2.8]{ABM} (see also \cite[Lemma 7.3]{BCWZ}):

\begin{lemma}[Strong forward Dirac maps]\label{lem : Strong forward Dirac maps}
A strong, forward Dirac map $\varphi : (\Sigma,L_{\Sigma}) \to (M,L)$ induces a comorphism of Lie algebroids $\Phi:\varphi^*(L) \to L_{\Sigma}$.

A presymplectic realization $\varphi : (\Sigma,\mathrm{Gr}(\omega)) \to (M,L)$ induces a comorphism of Lie algebroids $\Psi:\varphi^*(L) \to T\Sigma$.
\end{lemma}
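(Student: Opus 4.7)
The plan is to construct $\Phi$ pointwise, using the combination of forward-Dirac-ness (for existence of $\varphi$-related lifts) and strongness (for their uniqueness), and then to read off the two defining properties of a Lie algebroid comorphism from the naturality of the Dorfman bracket under $\sim_{\varphi}$ already recorded in \S 3.

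Concretely, I would first argue that for every $p\in\Sigma$ and every $a\in L_{\varphi(p)}$ there is a unique $\tilde a\in L_{\Sigma,p}$ with $\tilde a\sim_{\varphi}a$. Existence is the hypothesis $\varphi_{!}(L_{\Sigma,p})=L_{\varphi(p)}$. For uniqueness, any two such lifts $\tilde a,\tilde a'$ satisfy $\varphi_{*}(\mathrm{pr}_{T}(\tilde a-\tilde a'))=0$ and $\mathrm{pr}_{T^{*}}(\tilde a-\tilde a')=0$ (since the cotangent components are both $\varphi^{*}$ of the same covector), so $\tilde a-\tilde a'\in L_{\Sigma}\cap\ker\varphi_{*}=0$ by strongness. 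Setting $\Phi(p,a):=\tilde a$ gives a fiberwise linear map $\Phi:\varphi^{*}(L)\to L_{\Sigma}$ covering $\mathrm{id}_{\Sigma}$; fiberwise linearity is immediate from the linearity of the relation $\sim_{\varphi}$, and smoothness is a routine verification, e.g.\ by identifying the graph of $\Phi$ with a smooth subbundle of $\varphi^{*}(L)\oplus L_{\Sigma}$ cut out by the smooth relation $\sim_{\varphi}$.

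Next I would check the comorphism axioms. Anchor compatibility $\varphi_{*}\circ\varrho_{L_{\Sigma}}\circ\Phi=\varrho_{L}$ is immediate from unwinding the definition of $\sim_{\varphi}$. For bracket compatibility, the key observation is that, by construction, $\Phi^{\dagger}(a)\sim_{\varphi}a$ for every $a\in\Gamma(L)$. The naturality statement recalled in \S 3 then gives $[\Phi^{\dagger}(a),\Phi^{\dagger}(b)]\sim_{\varphi}[a,b]$; on the other hand $\Phi^{\dagger}([a,b])\sim_{\varphi}[a,b]$ by construction, and both sections lie in $\Gamma(L_{\Sigma})$, so the pointwise uniqueness of lifts established above forces the two to agree.

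For the second assertion, the crux is that when $\omega$ is closed, the tangent projection $\mathrm{pr}_{T}:\mathrm{Gr}(\omega)\to T\Sigma$ is a Lie algebroid isomorphism, as a short computation with the Dorfman bracket shows $[u+\iota_{u}\omega,v+\iota_{v}\omega]=[u,v]+\iota_{[u,v]}\omega$ whenever $\mathrm{d}\omega=0$. Composing $\Phi$ with this isomorphism yields the desired $\Psi:\varphi^{*}(L)\to T\Sigma$. The only mildly delicate point in the whole argument is the smoothness of $\Phi$, but even this becomes routine once existence and uniqueness have been isolated, so I do not expect any genuine obstacle beyond being careful with the bookkeeping of the relation $\sim_{\varphi}$.
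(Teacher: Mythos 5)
Your proposal is correct and follows essentially the same route as the paper: existence of the lift from the forward condition, uniqueness from the strong condition, bracket compatibility from the naturality of the Dorfman bracket under $\sim_{\varphi}$ combined with uniqueness, and the second assertion by composing with $\mathrm{pr}_T:\mathrm{Gr}(\omega)\to T\Sigma$ (which the paper calls the anchor map of $\mathrm{Gr}(\omega)$ and you additionally note is a Lie algebroid isomorphism when $\dd\omega=0$). The extra details you supply — the explicit uniqueness computation via $\tilde a-\tilde a'\in L_{\Sigma}\cap\ker\varphi_{*}$ and the bracket computation on $\mathrm{Gr}(\omega)$ — are accurate elaborations of steps the paper leaves implicit.
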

\begin{proof}
The forward condition on $\varphi$ means that, for each $x \in \Sigma$ and $a\in L_{\varphi(x)}$, there is at least one $b \in L_{\Sigma,x}$ such that $b \sim_{\varphi} a$, and the strong condition means that at most one such $b$ exists. This defines a comorphism of vector bundles $\Phi:\varphi^*(L) \to L_{\Sigma}$ by setting $b=:\Phi_x(a)$. If now $a_1,a_2 \in \Gamma(L)$, then $\Phi^{\dagger}(a_1) \sim_{\varphi} a_1$ and $\Phi^{\dagger}(a_2) \sim_{\varphi} a_2$; therefore $\Phi^{\dagger}([a_1,a_2]) \sim_{\varphi} [a_1,a_2]$; by uniqueness, it follows that $\Phi^{\dagger}([a_1,a_2])=[\Phi^{\dagger}(a_1),\Phi^{\dagger}(a_2)]$ and hence $\Phi^{\dagger}$ is a homomorphism of Lie algebras. This proves the first assertion, and the second follows by composing $\Phi$ with the anchor map $\mathrm{pr}_T:\mathrm{Gr}(\omega) \to T\Sigma$.
\end{proof}

\begin{remark}\label{rem : bracket relations}
 Let $(M_0,L_0)\stackrel{\bs}{\lmap} (\Sigma,\mathrm{Gr}(\omega)) \stackrel{\bt}{\rmap} (M_1,-L_{1})$ be a dual pair. By Proposition \ref{pro : equivalent dual pair} $iii)$ and Lemma \ref{lem : Strong forward Dirac maps} above, we obtain a comorphism of Lie algebroids
 \[\Psi:(\bs, \bt)^*(L_0\times -L_1)\rmap T\Sigma.\]
 Restricting this to the first and second components, we obtain induced comorphisms of Lie algebroids:
 \[
  \Psi_0 : \bs^*(L_0) \rmap W = \ker\bt_*, \quad \Psi_1 : \bt^*(L_1) \rmap V= \ker\bs_*.
 \]
 For sections $a_i \in \Gamma(L_i)$, $w:=\Psi_0^{\dagger}(a_0) \in \Gamma(W)$ is the unique vector field tangent to the fibres of $\bt$, satisfying $w+\iota_w\omega \sim_{\bs} a_0$; similarly, $v:=\Psi_1^{\dagger}(a_1)$ is the unique vector field tangent to the fibres of $\bs$ satisfying $v-\iota_v\omega \sim_{\bt} a_1$. Note also that any two such vector fields $\Psi_0^{\dagger}(a_0),\Psi_1^{\dagger}(a_1)$ are $\omega$-orthogonal and commute, since
 \[
  [\Psi_0^{\dagger}(a_0),\Psi_1^{\dagger}(a_1)]+\iota_{[\Psi_0^{\dagger}(a_0),\Psi_1^{\dagger}(a_1)]}\omega \sim_{(\bs,\bt)} [(a_0,0),(0,\overline{a_1})] = 0,
 \]where for a section $a=u+\xi$, $\overline{a}$ denotes the section $u-\xi$.
\end{remark}

The main example of dual pairs is the following:

\begin{example}\label{remark : hausdorff integrations}
If a Dirac manifold $(M,L)$ is integrable by a (Hausdorff) presymplectic groupoid $(G,\omega)\rightrightarrows (M,L)$, then
\[
(M,L) \stackrel{\bs}{\lmap} (G,\mathrm{Gr}(\omega)) \stackrel{\bt}{\rmap} (M,-L)
\]
forms a dual pair \cite{BCWZ}. In this case, the induced comorphism of Lie algebroids $\bs^*(L) \to W$ is the canonical action by left-invariant vector fields of the Lie algebroid $L$ on its Lie groupoid $G$, and is automatically complete (in the sense of \S 13). Our Main Theorem is strongly related to this example (see Section \ref{sec : further}).
\end{example}

It was proven in \cite[Theorem 8]{CrFer04} that the existence of a complete symplectic realization of a Poisson manifold implies its integrability; but, as pointed out in \cite[Corollary 7]{CrFer04}, the proof depends only upon the existence of a complete action of the corresponding Lie algebroid. Applying these results in the same way as in the case of presymplectic realizations \cite[Remark 7.5]{BCWZ}, one obtains the following:

\begin{corollary}[Integrability criterion]\label{coro: int}
Consider a surjective, forward Dirac submersion $\bs : (\Sigma,\mathrm{Gr}(\omega)) \to (M,L)$. Assume that there is an involutive subbundle $W\subset T\Sigma$ such that:
\begin{equation*}
\mathrm{Gr}(\omega)^{\bs}=V\oplus \mathcal{R}_{\omega}(W), \ \ \textrm{where}\ V:=\ker s_*.
\end{equation*}

If the induced comorphism of Lie algebroids $\bs^*(L) \to W$ (see Corollary \ref{cor : forms} $iii)$) is complete (see \S 13), then $(M,L)$ is integrable by a (not necessarily Hausdorff) presymplectic groupoid. In that case, the source-simply connected Lie groupoid integrating $(M,L)$ is Hausdorff if and only if the holonomy groupoid $\mathrm{Hol}(W)\rightrightarrows \Sigma$ of the foliation $W$ is Hausdorff.
\end{corollary}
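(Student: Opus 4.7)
The plan is to reduce the statement to the integrability-via-complete-actions result \cite[Corollary 7]{CrFer04}, following the presymplectic adaptation sketched in \cite[Remark 7.5]{BCWZ}. First, by Corollary \ref{cor : forms} iii), the hypothesis on the involutive subbundle $W$ yields a Lie algebroid comorphism $\Phi : \bs^*(L) \to W$, which by compatibility with anchors encodes an infinitesimal action of $L$ on the submersion $\bs : \Sigma \to M$, with action vector fields $\Phi^{\dagger}(a) \in \Gamma(W)$ tangent to the $W$-foliation.

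Second, completeness of $\Phi$ in the sense of \S 13 asserts precisely that whenever the anchor image of $a \in \Gamma(L)$ is a complete vector field on $M$, the field $\Phi^{\dagger}(a)$ on $\Sigma$ is also complete. By \cite[Corollary 7]{CrFer04}, the existence of such a complete infinitesimal action implies that $L$ is integrable, and one can explicitly realize its source-simply connected integration $G$ as a quotient of the holonomy groupoid $\mathrm{Hol}(W) \rightrightarrows \Sigma$ by the free, proper groupoid action identifying holonomy classes that differ by the $L$-action along $\bs$-fibres. Under this identification, source and target on $G \rightrightarrows M$ are induced by $\bs$ and by $\bs$ composed with $W$-holonomy transport.

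Third, the closed two-form $\omega$ on $\Sigma$ descends to a multiplicative presymplectic form $\Omega$ on $G$. The direct-sum assumption $\mathrm{Gr}(\omega)^{\bs} = V \oplus \mathcal{R}_{\omega}(W)$ ensures, by Lemma \ref{lema: equivalent dual}, that $V$ and $W$ are $\omega$-orthogonal with $V \cap K \cap W = 0$; these two facts are exactly what is needed for the pullback of $\omega$ along the source/target of $\mathrm{Hol}(W)$ to have kernel aligned with the fibres of $\mathrm{Hol}(W) \to G$, so that it descends well-defined and closed. Multiplicativity follows from the compatibility captured in Remark \ref{rem : bracket relations} (applied to $\Phi^{\dagger}$), and the identification of the algebroid of $(G,\Omega)$ with $L$ uses the surjectivity relation $\bs_!(\mathrm{Gr}(\omega)) = L$. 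Finally, since $\mathrm{Hol}(W) \to G$ is a quotient by a free, proper groupoid action, the Hausdorff property is preserved and reflected, giving the last assertion.

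The main obstacle is the descent of $\omega$ to a multiplicative form on $G$ whose infinitesimal counterpart is $L$; this is where the direct-sum hypothesis (as opposed to the merely Lagrangian sum of the weak dual pair condition) is indispensable, and it parallels the argument in \cite[Remark 7.5]{BCWZ} carried out in the Dirac setting via the comorphism $\Phi$ supplied by Proposition \ref{prop: comorphism criterion, Dirac}.
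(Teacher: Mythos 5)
Your proposal takes essentially the same route as the paper, which establishes this corollary purely by reduction to the cited results: the comorphism from Corollary \ref{cor : forms} $iii)$ gives a free (since $\bs^*(L)\diffto W\subset T\Sigma$ has injective anchor) and complete Lie algebroid action along $\bs$, to which \cite[Corollary 7]{CrFer04} applies exactly as in \cite[Remark 7.5]{BCWZ}, including the identification of Hausdorffness of the source-simply-connected integration with that of $\mathrm{Hol}(W)$. The extra structural details you supply --- realizing $G$ as a quotient of $\mathrm{Hol}(W)$ and descending $\omega$ to the multiplicative form --- are consistent with that machinery but are asserted rather than proved; the paper likewise defers them entirely to the references.
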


\section{Operations with weak dual pairs}\label{sec : properties}

In this section we discuss operations that can be performed with weak dual pairs: the operation of \emph{composition} of weak dual pairs, the operation of \emph{pullback along a surjective submersion}, and its partial inverse, called the \emph{reduction} procedure (which, under certain assumptions, allows one to reduce a weak dual pair to a dual pair), and finally the \emph{pullback procedure along transverse maps}.

\subsection*{Composition}

Weak dual pairs can be \emph{composed}. Namely, suppose
\begin{align*}(M_0,L_0) \stackrel{\bs_{01}}{\lmap} (\Sigma_{01},\mathrm{Gr}(\omega_{01})) \stackrel{\bt_{01}}{\rmap} (M_1,-L_1), \\ (M_1,L_1) \stackrel{\bs_{12}}{\lmap}  (\Sigma_{12},\mathrm{Gr}(\omega_{12})) \stackrel{\bt_{12}}{\rmap} (M_2,-L_2)\end{align*}are weak dual pairs. Their \emph{composition} is
\begin{align*}
(M_0,L_0) \stackrel{\bs_{02}}{\lmap} & (\Sigma_{02},\mathrm{Gr}(\omega_{02})) \stackrel{\bt_{02}}{\rmap} (M_2,-L_2),
\end{align*}where
\begin{align*}
& \Sigma_{02}:=\Sigma_{01} \times_{M_1} \Sigma_{12}, \quad \omega_{02}:=\mathrm{pr}_1^*(\omega_{01})+\mathrm{pr}_2^*(\omega_{12}),\\
& M_0 \stackrel{\bs_{02}}{\lmap} \Sigma_{02} \stackrel{\bt_{02}}{\rmap} M_2, \quad \bs_{02}:=\bs_{01} \circ \mathrm{pr}_1, \ \ \bt_{02}:=\bt_{12} \circ \mathrm{pr}_2.\end{align*}

Pictorially:
\[
\xymatrix@C-15pt{
&& (\Sigma_{02},\mathrm{Gr}(\omega_{02})) \ar[dl]_{\mathrm{pr}_1} \ar[dr]^{\mathrm{pr}_2} && \\
&  (\Sigma_{01},\mathrm{Gr}(\omega_{01})) \ar[dl]_{\bs_{01}} \ar[dr]^{\bt_{01}} & &  (\Sigma_{12},\mathrm{Gr}(\omega_{12}))  \ar[dl]_{\bs_{12}} \ar[dr]^{\bt_{12}} &\\
(M_0,L_0) & & (M_1,\pm L_1)  & & (M_2, -L_2)
}
\]

\begin{proposition}\label{lem : composition}
The composition of weak dual pairs is again a weak dual pair.
\end{proposition}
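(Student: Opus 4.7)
The plan is to verify characterization (iv) of Proposition \ref{pro : equivalent wdp}: namely, that
$\bs_{02}^!(L_0) = \mathcal{R}_{\omega_{02}}(\bt_{02}^!(L_2))$
and that $(\bs_{02}, \bt_{02}) : (\Sigma_{02}, \mathrm{Gr}(\omega_{02})) \to (M_0, L_0) \times (M_2, -L_2)$ is a forward Dirac map. The preliminaries are routine: the fibre product $\Sigma_{02}$ is a smooth manifold because $\bt_{01}$ and $\bs_{12}$ are submersions, the projections $\mathrm{pr}_1, \mathrm{pr}_2$ are submersions, hence $\bs_{02} = \bs_{01} \circ \mathrm{pr}_1$ and $\bt_{02} = \bt_{12} \circ \mathrm{pr}_2$ are surjective submersions, and $\omega_{02}$ is closed as a sum of pullbacks of closed two-forms.

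For the first equality I would perform a formal chase using two elementary properties of pullback of Lagrangian families: functoriality, $(\varphi \circ \psi)^! = \psi^! \circ \varphi^!$, and commutation with gauge transformations, $\varphi^!(\mathcal{R}_\omega(L)) = \mathcal{R}_{\varphi^* \omega}(\varphi^!(L))$. The defining identity $\bt_{01} \circ \mathrm{pr}_1 = \bs_{12} \circ \mathrm{pr}_2$ of the fibre product, combined with the two weak dual pair identities $\bs_{01}^!(L_0) = \mathcal{R}_{\omega_{01}}(\bt_{01}^!(L_1))$ and $\bs_{12}^!(L_1) = \mathcal{R}_{\omega_{12}}(\bt_{12}^!(L_2))$ (from Proposition \ref{pro : equivalent wdp} (iv) applied to each factor), then yield
\[
\bs_{02}^!(L_0) \;=\; \mathrm{pr}_1^!\,\bs_{01}^!(L_0) \;=\; \mathcal{R}_{\mathrm{pr}_1^*\omega_{01}+\mathrm{pr}_2^*\omega_{12}}\bigl(\bt_{02}^!(L_2)\bigr) \;=\; \mathcal{R}_{\omega_{02}}\bigl(\bt_{02}^!(L_2)\bigr).
\]

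For the forward Dirac property of $(\bs_{02}, \bt_{02})$, I would use the fact that each of the original pairs has $(\bs_{ij}, \bt_{ij})$ forward Dirac (by Proposition \ref{pro : equivalent wdp} (iii)), and bridge the two lifts through an intermediate element of $L_1$. Given $(p, q) \in \Sigma_{02}$ and $(a_0, a_2') \in L_{0, \bs_{01}(p)} \times (-L_2)_{\bt_{12}(q)}$, write $a_0 = u_0 + \xi_0$, $a_2' = u_2 - \xi_2$ and pick any $a_1 = u_1 + \xi_1 \in L_{1, m}$ with $m := \bt_{01}(p) = \bs_{12}(q)$ (for example, $a_1 = 0$). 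Use the forward Dirac property of $(\bs_{01}, \bt_{01})$ to lift $(a_0, u_1 - \xi_1) \in L_0 \times (-L_1)$ to some $U_1 \in T_p \Sigma_{01}$, and of $(\bs_{12}, \bt_{12})$ to lift $(a_1, a_2') \in L_1 \times (-L_2)$ to some $U_2 \in T_q \Sigma_{12}$. The matching $(\bt_{01})_* U_1 = u_1 = (\bs_{12})_* U_2$ places $(U_1, U_2)$ in $T_{(p,q)} \Sigma_{02}$, and for any test vector $(V_1, V_2) \in T_{(p,q)}\Sigma_{02}$ the expansion $\omega_{02}((U_1, U_2), (V_1, V_2)) = \omega_{01}(U_1,V_1) + \omega_{12}(U_2,V_2)$ yields four terms, two of which cancel thanks to $(\bt_{01})_* V_1 = (\bs_{12})_* V_2$, leaving precisely $(\bs_{02}^* \xi_0 - \bt_{02}^* \xi_2)(V_1, V_2)$, as required.

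The main obstacle is the bridging argument of the previous paragraph, but it becomes transparent once one notices that the intermediate element $a_1 \in L_{1, m}$ is unconstrained by the data $(a_0, a_2')$, so the simplest choice (e.g.\ $a_1 = 0$) already works. With both halves of characterization (iv) verified, Proposition \ref{pro : equivalent wdp} identifies the composition as a weak dual pair.
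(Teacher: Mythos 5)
Your proposal is correct and follows essentially the same route as the paper: both verify characterization $iv)$ of Proposition \ref{pro : equivalent wdp}, with the identical gauge/pullback chase for $\bs_{02}^!(L_0)=\mathcal{R}_{\omega_{02}}(\bt_{02}^!(L_2))$, and a lifting argument for the forward Dirac property of $(\bs_{02},\bt_{02})$. Your bridging through an arbitrary intermediate $a_1\in L_{1,m}$ specializes at $a_1=0$ to the paper's explicit construction of the $\bt_{01}$-vertical lift $(w_{01},0)$ of $a_0$ and the $\bs_{12}$-vertical lift $(0,v_{12})$ of $a_2$, so the mechanism is the same.
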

\begin{proof}
We must check that
\begin{align*}\label{eq : dual pair?}
(M_0,L_0) \stackrel{\bs_{02}}{\lmap} (\Sigma_{02},\mathrm{Gr}(\omega_{02})) \stackrel{\bt_{02}}{\rmap} (M_2,-L_2)
\end{align*}is again a weak dual pair. First, since $\bs_{12}$ is a submersion, the map $\mathrm{pr}_1:\Sigma_{02}\to \Sigma_{01}$ is a submersion. Similarly, $\mathrm{pr}_2:\Sigma_{02}\to \Sigma_{12}$ is a submersion. Hence, $\bs_{02},\bt_{02}$ are compositions of submersions. The same sequence of implications shows that these maps are also surjective.

We turn next to the characterization provided by item $iv)$ of Proposition \ref{pro : equivalent wdp}. From
\[
\mathcal{R}_{-\omega_{01}}(\bs_{01}^!(L_0)) = \bt_{01}^!(L_1), \quad \bs_{12}^!(L_1) = \mathcal{R}_{\omega_{12}}(\bt_{12}^!(L_2)),
\]
and from the equality $\bt_{01}\circ\pr_1=\bs_{12}\circ\pr_2$ on $\Sigma_{02}$, it follows that
\[
\mathcal{R}_{-\mathrm{pr}_1^*(\omega_{01})}(\bs_{02}^!(L_0)) = \mathcal{R}_{\mathrm{pr}_2^*(\omega_{12})}(\bt_{12}^!(L_2)) \quad \iff \quad \bs_{02}^!(L_0) = \mathcal{R}_{\omega_{02}}(\bt_{02}^!(L_2)).
\]
%
Thus the first condition of Proposition \ref{pro : equivalent wdp} $iv)$  holds. We concllude the proof by showing that also the second condition in Proposition \ref{pro : equivalent wdp} $iv)$ hold. Let $a_0 \in L_0$. Then there is $w_{01} \in W_{01}:=\ker (\bt_{01*})$ such that $w_{01}+\iota_{w_{01}}\omega_{01} \sim_{\bs_{01}} a_0$. Then $w_{02}:=(w_{01},0) \in T\Sigma_{01} \times_{TM_1} T\Sigma_{12}$ lies in $W_{02}:=\ker (\bt_{02*})$, and $w_{02} + \iota_{w_{02}}\omega_{02} \sim_{\bs_{02}} a_0$. Similarly, for all $a_1 \in -L_1$, there is $v_{02} \in V_{02}:=\ker (\bs_{02*})$ such that $v_{02} + \iota_{v_{02}}\omega_{02} \sim_{\bt_{02}} a_1$. Hence
\[
w_{02}+v_{02}+\iota_{w_{02}+v_{02}}\omega_{02} \sim_{(\bs_{02},\bt_{02})} (a_0,a_1).
\]This concludes the proof.
\end{proof}

\begin{remark}
Given dual pairs
\begin{align*}
(M_0,L_0) \stackrel{\bs_{01}}{\lmap} (\Sigma_{01},\mathrm{Gr}(\omega_{01})) \stackrel{\bt_{01}}{\rmap} (M_1,-L_1)\\
(M_1,L_1) \stackrel{\bs_{12}}{\lmap}  (\Sigma_{12},\mathrm{Gr}(\omega_{12})) \stackrel{\bt_{12}}{\rmap} (M_2,-L_2),
\end{align*}
their composition (in the sense of Proposition \ref{lem : composition}) is not a dual pair, unless $\dim M_1 =0$, simply because
\[
 \dim \Sigma_{02} = \dim M_0 + \dim M_1 + \dim M_2.
\]
and hence the dimension condition of Definition \ref{def : dual pair} is violated.

When the foliation $\ker\bs_{02*} \cap \ker \omega_{02} \cap \ker\bt_{02*} \subset T\Sigma_{02}$ is simple, the reduction procedure of Proposition \ref{prop: reduction of dual pairs} $ii)$ below, applied to their composition, yields again a dual pair. Note however that this requirement is not always met, as illustrated in Example \ref{ex : composition which is not reducible} below.
\end{remark}

\subsection*{Reduction to dual pairs}

In the following, we describe how weak dual pairs can be pulled back via surjective submersions, and how, under certain assumptions (which hold locally), weak dual pairs can be \emph{reduced} to dual pairs.

\begin{proposition}[Reduction to dual pairs]\label{prop: reduction of dual pairs}
Consider a weak dual pair diagram\[(M_0,L_0) \stackrel{\bs}{\lmap} (\Sigma,\mathrm{Gr}(\omega)) \stackrel{\bt}{\rmap} (M_1,-L_1).\]
\begin{enumerate}[i)]
\item If $\mathrm{r}:\widetilde{\Sigma} \to \Sigma$ is a surjective submersion, then
\[
(M_0,L_0) \stackrel{\bs \circ \mathrm{r}}{\lmap} (\widetilde{\Sigma},\mathrm{Gr}(\mathrm{r}^*(\omega))) \stackrel{\bt \circ \mathrm{r}}{\rmap} (M_1,-L_1)
\]is again a weak dual pair.
\item
Assume that the foliation $V\cap K\cap W$ is simple, i.e.,\ that there exists a surjective submersion $\mathrm{r}:\Sigma\to \overline{\Sigma}$ whose fibres are the leaves of $V\cap K\cap W$. Then there is an induced commutative diagram of surjective, forward Dirac submersions:
\[
\xymatrixrowsep{0.4cm}
\xymatrixcolsep{1.2cm}
\xymatrix{
 & (\Sigma,\mathrm{Gr}(\omega)) \ar[dr]^{\bt}\ar[dl]_{\bs}\ar[d]^{\mathrm{r}} &  \\
 (M_0,L_0) & (\overline{\Sigma},\mathrm{Gr}(\overline{\omega})) \ar[r]^{\overline{\bt}}\ar[l]_{\overline{\bs}} &(M_1,-L_1)}\]
where $\overline{\omega}$ is a closed two-form on $\overline{\Sigma}$ such that $\omega=\mathrm{r}^*(\overline{\omega})$, and where the bottom line is a dual pair.
\end{enumerate}
\end{proposition}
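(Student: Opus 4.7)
The plan for part (i) is a direct verification. First, I would observe that $\mathrm{r}$ itself is a forward Dirac submersion $(\widetilde{\Sigma},\mathrm{Gr}(\mathrm{r}^*\omega)) \to (\Sigma,\mathrm{Gr}(\omega))$: given any $u + \iota_u\omega \in \mathrm{Gr}(\omega)_{\mathrm{r}(p)}$, pick any $\mathrm{r}_*$-lift $\widetilde{u}$ of $u$; then $\widetilde{u} + \iota_{\widetilde{u}}\mathrm{r}^*\omega = \widetilde{u} + \mathrm{r}^*\iota_u\omega$ is $\mathrm{r}$-related to it. Hence $\bs \circ \mathrm{r}$ and $\bt \circ \mathrm{r}$ are surjective, forward Dirac submersions. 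To verify the rank condition $(*)$, I would compute $\widetilde{V} = \mathrm{r}_*^{-1}(V)$, $\widetilde{W} = \mathrm{r}_*^{-1}(W)$, and $\widetilde{K} = \mathrm{r}_*^{-1}(K)$ (the last since $\mathrm{r}$ is a submersion), giving $\widetilde{V} \cap \widetilde{K} \cap \widetilde{W} = \mathrm{r}_*^{-1}(V \cap K \cap W)$, of rank $\mathrm{rank}(V \cap K \cap W) + (\dim \widetilde{\Sigma} - \dim\Sigma) = \dim \widetilde{\Sigma} - \dim M_0 - \dim M_1$. The isotropy $\mathrm{r}^*\omega(\widetilde{V},\widetilde{W}) = \omega(V,W) = 0$ is immediate.

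For part (ii), the crucial step will be to descend $\omega$ to $\overline{\Sigma}$. The inclusions $V \cap K \cap W \subset V$ and $V \cap K \cap W \subset W$ guarantee that $\bs$ and $\bt$ are constant along the leaves of $V \cap K \cap W$ and thus descend uniquely to surjective submersions $\overline{\bs}$ and $\overline{\bt}$. To produce $\overline{\omega}$, I plan to invoke Proposition \ref{prop: Libermann simple} with $\mathrm{r}:\Sigma \to \overline{\Sigma}$ applied to the Dirac structure $\mathrm{Gr}(\omega)$: since any $X \in V \cap K \cap W = \ker \mathrm{r}_*$ satisfies $\iota_X\omega = 0$, the vertical bundle $\ker \mathrm{r}_*$ lies in $\mathrm{Gr}(\omega)$, so $\mathrm{Gr}(\omega) = \mathrm{r}^!(\overline{L})$ for the Dirac structure $\overline{L}= \mathrm{r}_!(\mathrm{Gr}(\omega))$ on $\overline{\Sigma}$. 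The main technical obstacle is then to confirm that $\overline{L}$ is itself the graph of a two-form: if $\overline{u} + \overline{\xi} \in \overline{L}$ has $\overline{u} = 0$, any lift $u + \iota_u\omega \in \mathrm{Gr}(\omega)$ satisfies $u \in \ker \mathrm{r}_* \subset K$, so $\mathrm{r}^*\overline{\xi} = \iota_u\omega = 0$, forcing $\overline{\xi} = 0$ by injectivity of $\mathrm{r}^*$. A dimension count then yields $\overline{L} = \mathrm{Gr}(\overline{\omega})$ for a smooth closed two-form $\overline{\omega}$ with $\omega = \mathrm{r}^*\overline{\omega}$.

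To conclude, I will verify that the bottom row is a dual pair. The forward Dirac property of $\overline{\bs}$ follows from the composition $L_0 = \bs_!(\mathrm{Gr}(\omega)) = \overline{\bs}_!\,\mathrm{r}_!(\mathrm{Gr}(\omega)) = \overline{\bs}_!(\mathrm{Gr}(\overline{\omega}))$, and analogously for $\overline{\bt}$. The dimensional identity $\dim \overline{\Sigma} = \dim \Sigma - \mathrm{rank}(V \cap K \cap W) = \dim M_0 + \dim M_1$ follows from $(*)$ upstairs. Finally, to check $(\davidsstar)$, I will argue: if an element of $\overline{V} \cap \overline{K} \cap \overline{W}$ is represented as $\mathrm{r}_*v = \mathrm{r}_*w = \mathrm{r}_*k$ with $v \in V$, $w \in W$, $k \in K$, then $v - w \in \ker \mathrm{r}_* = V \cap K \cap W \subset W$ forces $v \in W$, and likewise $v \in K$, so $v \in \ker \mathrm{r}_*$ and $\mathrm{r}_*v = 0$. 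Combined with $\overline{\omega}(\overline{V}, \overline{W}) = 0$ (inherited from above, since $\mathrm{r}_*V = \overline{V}$ and $\mathrm{r}_*W = \overline{W}$), this upgrades the inherited weak dual pair condition to the dual pair condition $(\davidsstar)$.
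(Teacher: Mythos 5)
Your proposal is correct and follows essentially the same route as the paper's: part (ii) proceeds exactly as in the paper (descend $\omega$, $\bs$, $\bt$ through $\mathrm{r}$ using Proposition \ref{prop: Libermann simple}, then verify the dual-pair conditions pointwise on $\overline{\Sigma}$), and you even make explicit two points the paper leaves implicit, namely that $\mathrm{r}_!(\mathrm{Gr}(\omega))$ is again a graph and the pointwise check that $\overline{V}\cap\overline{K}\cap\overline{W}=0$. The only divergence is in part (i), where you verify condition $(*)$ of Definition \ref{def : weak dual pair} directly via $\widetilde{V}\cap\widetilde{K}\cap\widetilde{W}=\mathrm{r}_*^{-1}(V\cap K\cap W)$ instead of invoking the characterization in Proposition \ref{pro : equivalent wdp} $iv)$ as the paper does; both arguments are equally valid.
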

\begin{proof}
We use the description of weak dual pairs provided by item $iv)$ of Proposition \ref{pro : equivalent wdp}. The first condition of $iv)$ holds, since
\[
 (\bs \circ \mathrm{r})^!(L_0) = \mathrm{r}^!(\bs^!(L_0)) = \mathrm{r}^!(\mathcal{R}_{\omega}(\bt^!(L_1))) = \mathcal{R}_{\mathrm{r}^*(\omega)}((\bt \circ \mathrm{r})^!(L_1)),
\]whereas the second holds because, for any surjective submersion $\mathrm{r}:\widetilde{\Sigma} \to \Sigma$, we have that $\mathrm{r}:(\widetilde{\Sigma},\mathrm{Gr}(\mathrm{r}^*\omega)) \to (\Sigma,\mathrm{Gr}(\omega))$ is forward Dirac. This proves $i)$.

By Lemma \ref{lem : VWK}, $V\cap K\cap W$ is a smooth, involutive distribution. For $ii)$, assume the existence of a surjective submersion $\mathrm{r}:\Sigma\to \overline{\Sigma}$ whose fibres are the leaves of $V\cap K\cap W$. By applying Proposition \ref{prop: Libermann simple} to the three Dirac structures corresponding to the foliations $V$ and $W$ and to the closed two-form $\omega$, we deduce that there are foliations $\overline{V}$ and $\overline{W}$, and a closed two-form $\overline{\omega}$ on $\overline{\Sigma}$, such that
\[V=\mathrm{r}^{-1}_*\overline{V}, \quad W=\mathrm{r}_*^{-1}\overline{W}, \quad \omega=\mathrm{r}^*\overline{\omega}.\]Since $\ker\mathrm{r}_*\subset V\cap W$, and the fibres of $\mathrm{r}$ are connected, it follows that $\bs$ and $\bt$ are constant along the fibres of $\mathrm{r}$; hence they factor as in the diagram, $\bs=\overline{\bs}\circ\mathrm{r}$, $\bt=\overline{\bt}\circ\mathrm{r}$, and these equalities imply that the maps $\overline{\bs}$ and $\overline{\bt}$ are surjective submersions. Note next that both these maps push forward the Dirac structure $\overline{L}:=\mathrm{Gr}(\overline{\omega})$:
\[\overline{\bs}_{!}(\overline{L})=\overline{\bs}_{!}\mathrm{r}_!(L)=\bs_{!}(L)=L_0, \quad \overline{\bt}_{!}(\overline{L})=\overline{\bt}_{!}\mathrm{r}_!(L)=\bt_{!}(L)=-L_1,\]
where $L:=\mathrm{Gr}(\omega)$. Moreover, the equalities:
\[\overline{K}=\ker \overline{\omega}=\mathrm{r}_*K, \ \ \overline{V}=\mathrm{r}_*V, \ \ \overline{W}=\mathrm{r}_{*}W,\]
imply that
\[\overline{\omega}(\overline{V},\overline{W})=\overline{\omega}(\mathrm{r}_*V,\mathrm{r}_*W)
=\omega(V,W)=0,\]
\[\overline{V}\cap \overline{K}\cap \overline{W}=0.\]
Since
\[\dim(\overline{\Sigma})=\dim (\Sigma)-\mathrm{rank}(V\cap K\cap W)=\dim (M_0)+\dim (M_1),\]
all the conditions from Definition \ref{def : dual pair} are met, hence the diagram is a dual pair.
\end{proof}

\begin{example}\label{ex : composition which is not reducible}
 Consider the dual pairs
 \begin{align*}
  (\mathbb{S}^1,T^*\mathbb{S}^1) \stackrel{\mathrm{pr}_2}{\lmap} (\mathbb{S}^1 \times \mathbb{S}^1,\mathrm{Gr}(\omega_{01})) \stackrel{\mathrm{pr}_1}{\rmap} (\mathbb{S}^1,T^*\mathbb{S}^1), \\
  (\mathbb{S}^1,T^*\mathbb{S}^1) \stackrel{\mathrm{pr}_2}{\lmap} (\mathbb{S}^1 \times \mathbb{S}^1,\mathrm{Gr}(\omega_{12})) \stackrel{\mathrm{pr}_1}{\rmap} (\mathbb{S}^1,T^*\mathbb{S}^1) \\
  \omega_{01} = \mathrm{pr}_1^*(\dd \theta) \wedge \mathrm{pr}_2^*(\dd \theta), \quad \omega_{12} = \lambda\mathrm{pr}_1^*(\dd \theta) \wedge \mathrm{pr}_2^*(\dd \theta),
 \end{align*}where $\lambda \in \mathbb{R} \diagdown \mathbb{Q}$ and where $\dd \theta \in \Omega^1(\mathbb{S}^1)$ denotes a volume form. Their composition as weak dual pairs is isomorphic with the diagram
 \begin{align*}
  (\mathbb{S}^1,T^*\mathbb{S}^1) \stackrel{\mathrm{pr}_2}{\lmap} (\mathbb{S}^1 \times \mathbb{S}^1 \times \mathbb{S}^1,\mathrm{Gr}(\omega_{02})) \stackrel{\mathrm{pr}_1}{\rmap} (\mathbb{S}^1,T^*\mathbb{S}^1).
 \end{align*}The leaves of the foliation $\ker\bs_{02*} \cap \ker \omega_{02} \cap \ker\bt_{02*} = \la \lambda \partial_{\theta_1}+\partial_{\theta_3}\ra$ on $\mathbb{S}^1 \times \mathbb{S}^1 \times \mathbb{S}^1$ are not closed; in particular, the foliation is not simple, and therefore the weak dual pair above does not reduce to a dual pair. (See also \cite[p. 39]{BursztynWeinstein})
\end{example}

Let us give a simple characterization of the situation when one of the legs of a weak dual pair is a Poisson structure.

\begin{lemma}\label{lem : poisson}
Let $(M_0,L_0) \stackrel{\bs}{\lmap} (\Sigma,\mathrm{Gr}(\omega)) \stackrel{\bt}{\rmap} (M_1,-L_1)$ be a weak dual pair.
 \begin{enumerate}[i)]
\item Then $L_1$ is a Poisson structure if and only if $W=V^{\omega}$.
\item If $L_1$ is a Poisson structure, then $\bs$ is a presymplectic realization iff the weak dual pair is a dual pair.
\end{enumerate}
\end{lemma}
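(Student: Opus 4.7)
The plan is to deduce both statements from the algebraic equivalences of Lemma \ref{lem : equivalent weak} together with the Poisson criterion from Corollary \ref{cor : forms} $i)$.

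For part $i)$, I would first note that $\bt : (\Sigma, \mathrm{Gr}(\omega)) \to (M_1, -L_1)$ is a surjective forward Dirac submersion, so Corollary \ref{cor : forms} $i)$, applied with vertical distribution $W$, asserts that $-L_1$ (equivalently, $L_1$) is Poisson if and only if $K \subset W$. It then remains to match this condition with the equality $W = V^{\omega}$: Lemma \ref{lem : equivalent weak} $(c)$, taken with $B = V$ and $C = W$, yields $V^{\omega} = W + V \cap K$, so the inclusion $K \subset W$ at once forces $W = V^{\omega}$. For the converse, I would exploit the trivial inclusion $K \subset V^{\omega}$ (every vector in the kernel of $\omega$ is in particular $\omega$-orthogonal to $V$), which forces $K \subset W$ as soon as $W = V^{\omega}$.

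For part $ii)$, my approach is to unpack the presymplectic realization condition for $\bs$ into a statement about $V \cap K$, and then combine it with $i)$. By definition, $\bs$ is a presymplectic realization precisely when $\mathrm{Gr}(\omega) \cap V = 0$ inside $\TT\Sigma$, which, unwinding the graph of $\omega$, amounts to $V \cap K = 0$. Under the hypothesis that $L_1$ is Poisson, part $i)$ gives $K \subset W$, so that $V \cap K = V \cap K \cap W$; hence $V \cap K = 0$ becomes equivalent to condition $(\davidsstar)$, which by Definition \ref{def : dual pair} is precisely what upgrades the weak dual pair to a dual pair.

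I do not anticipate any substantial difficulty here: the whole argument is essentially a bookkeeping exercise built on Lemma \ref{lem : equivalent weak}. The only step worth a brief sanity check, and probably the closest thing to a real obstacle, is the converse in part $i)$, where one must notice the tautological inclusion $K \subset V^{\omega}$ in order to recover $K \subset W$ from $W = V^{\omega}$.
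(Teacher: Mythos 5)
Your proposal is correct and follows essentially the same route as the paper: both parts reduce to the identity $V^{\omega}=W+V\cap K$ from Lemma \ref{lem : equivalent weak} together with the observation that, once $V\cap K\subset W$, the dual-pair condition $V\cap K\cap W=0$ collapses to $V\cap K=0$. The only cosmetic difference is that you quote Corollary \ref{cor : forms} $i)$ to obtain the Poisson criterion $K\subset W$, whereas the paper computes $L_1\cap TM_1=\bt_*(V\cap K)$ directly from $\bt^!(L_1)=\mathcal{R}_{-\omega}(V)+W$ and arrives at $V\cap K\subset W$; these two conditions coincide for a weak dual pair precisely because of the tautological inclusion $K\subset V^{\omega}=W+V\cap K$ that you also use for the converse, so both arguments go through.
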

\begin{proof}
$L_1$ is a Poisson structure exactly when $L_1 \cap TM_1 = 0$; but note that
\begin{align*}
L_1 \cap TM_1  &= \bt_!\bt^!(L_1) \cap TM_1 = \bt_!(\mathcal{R}_{-\omega}(V)+W) \cap TM_1 = \\
&= \bt_!(\mathcal{R}_{-\omega}(V)) \cap TM_1= \bt_*(V \cap K).
\end{align*}
Hence, $L_1$ is Poisson iff $V \cap K \subset W$. By Lemma \ref{lem : equivalent weak}, we have $V^{\omega}=W+V\cap K$. Therefore, $V \cap K \subset W$ iff $V^{\omega} = W$. This proves $i)$.

Let us prove now $ii)$. By $i)$ we have that $V\cap K\subset W$. Therefore $V \cap K \cap W=0$ iff $V\cap K=0$. The first condition is equivalent to the diagram being a dual pair, while the second is equivalent to $\bs$ being a presymplectic realization.
\end{proof}

Proposition \ref{prop: reduction of dual pairs} and Lemma \ref{lem : poisson} give a procedure for reducing to presymplectic realizations:

\begin{corollary}[Reduction to presymplectic realizations]\label{coro : presymplectic reduction}
Let \[\bs:(\Sigma,\mathrm{Gr}(\omega))\rmap (M_0,L_0)\] be a surjective, forward Dirac submersion. Assume that $V^{\omega}$ is a smooth distribution. Then $V^{\omega}$ and $V\cap K$ are both smooth, involutive distributions, and:
\begin{enumerate}[i)]
\item if the foliation $V\cap K$ is simple, i.e.\ if its leaves are the fibres of a surjective submersion $\mathrm{r}:\Sigma\to \overline{\Sigma}$, then $\omega=\mathrm{r}^*(\overline{\omega})$, where $\overline{\omega}$ is a closed 2-form on $\overline{\Sigma}$, and $\bs$ factors as $\bs=\overline{\bs}\circ \mathrm{r}$, where $\overline{\bs}$ is a presymplectic realization
    \[
\xymatrixrowsep{0.4cm}
\xymatrixcolsep{1.2cm}
\xymatrix{
 & (\Sigma,\mathrm{Gr}(\omega)) \ar[dl]_{\bs}\ar[d]^{\mathrm{r}} &  \\
 (M_0,L_0) & (\overline{\Sigma},\mathrm{Gr}(\overline{\omega})) \ar[l]_{\overline{\bs}} &\textrm{\phantom{$(M_1,\mathrm{Gr}(-\pi_1))$}}}
 \]
 \item if the foliation $V^{\omega}$ is simple, i.e.\ if its leaves are the fibres of a surjective submersion $\bt:\Sigma\to M_1$, then there exists a Poisson structure $\pi_1$ on $M_1$ which fits into the weak dual pair
\[(M_0,L_0) \stackrel{\bs}{\lmap} (\Sigma,\mathrm{Gr}(\omega)) \stackrel{\bt}{\rmap} (M_1,\mathrm{Gr}(-\pi_1))\]
 \item if both $i)$ and $ii)$ are assumed, then $\bt$ factors as $\bt=\overline{\bt}\circ\mathrm{r}$, and the weak dual pair from $ii)$ reduces to a dual pair:
    \[
\xymatrixrowsep{0.4cm}
\xymatrixcolsep{1.2cm}
\xymatrix{
 & (\Sigma,\mathrm{Gr}(\omega)) \ar[dr]^{\bt}\ar[dl]_{\bs}\ar[d]^{\mathrm{r}} &  \\
 (M_0,L_0) & (\overline{\Sigma},\mathrm{Gr}(\overline{\omega})) \ar[r]^{\overline{\bt}}\ar[l]_{\overline{\bs}} &(M_1,\mathrm{Gr}(-\pi_1))}\]
\end{enumerate}
\end{corollary}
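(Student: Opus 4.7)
The plan is to establish smoothness and involutivity of $V^{\omega}$ and $V\cap K$ first, and then to prove $i)$, $ii)$, $iii)$ by successive applications of Corollary \ref{cor : forms} and Proposition \ref{prop: reduction of dual pairs}. For smoothness and involutivity, I would start from the rank identity
\[\mathrm{rank}(V^{\omega})=\dim \Sigma -\mathrm{rank}(V)+\mathrm{rank}(V\cap K),\]
which follows because $V^{\omega}$ and $V\cap K$ are the left and right kernels of the bilinear pairing $T\Sigma\times V\to \R$ induced by $\omega$. The hypothesis that $V^{\omega}$ has constant rank forces $V\cap K$ to have constant rank, and as the kernel of the smooth bundle map $V\to V^*,\ v\mapsto \omega(v,\cdot)|_V$, it is smooth. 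Involutivity of $V^{\omega}$ follows from Corollary \ref{cor : forms} $ii)$ applied to $\bs$, since $\mathrm{Gr}(\omega)^{\bs}=\bs^!(L_0)$ is already Dirac. Involutivity of $V\cap K$ follows from involutivity of $V$ together with $\iota_{[v,w]}\omega=\Lie_v\iota_w\omega-\iota_w\Lie_v\omega=0$ for $v,w\in \Gamma(V\cap K)$.

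For $i)$, assume $V\cap K$ is simple with quotient submersion $\mathrm{r}:\Sigma\to\overline{\Sigma}$. Since $V\cap K\subset K$ and $\Lie_v\omega=0$ for $v\in \Gamma(V\cap K)$, the two-form $\omega$ is $\mathrm{r}$-basic, yielding a closed $\overline{\omega}\in \Omega^2(\overline{\Sigma})$ with $\omega=\mathrm{r}^*(\overline{\omega})$. The inclusion $V\cap K\subset V$ gives a factorization $\bs=\overline{\bs}\circ \mathrm{r}$, and $\overline{\bs}$ is a surjective, forward Dirac submersion because $\mathrm{r}:(\Sigma,\mathrm{Gr}(\omega))\to (\overline{\Sigma},\mathrm{Gr}(\overline{\omega}))$ is. To conclude $\overline{\bs}$ is strong, a direct verification using that $\mathrm{r}$ is a submersion gives $\ker \overline{\bs}_*=\mathrm{r}_*(V)$ and $\ker \overline{\omega}=\mathrm{r}_*(K)$, whose intersection is $\mathrm{r}_*(V\cap K)=0$ since $V\cap K=\ker \mathrm{r}_*$.

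For $ii)$, set $W:=\ker \bt_*=V^{\omega}$. The general identity $(E^{\omega})^{\omega}=E+K$ specializes to $W^{\omega}=V+K$, and combined with $K\subset V^{\omega}=W$ one obtains
\[\mathrm{Gr}(\omega)^{\bt}=W+\mathcal{R}_{\omega}(W^{\omega})=W+\mathcal{R}_{\omega}(V)+K=W+\mathcal{R}_{\omega}(V).\]
Corollary \ref{cor : forms} $iii)$ applied to $\bt$ with the smooth, involutive complement $V$ then yields a Dirac structure $-L_1$ on $M_1$, pushed forward from $\mathrm{Gr}(\omega)$ along $\bt$. The identity $-L_1\cap TM_1=\bt_*(K)=0$, which uses $K\subset W$, identifies $-L_1=\mathrm{Gr}(-\pi_1)$ for a Poisson $\pi_1$. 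The condition $\omega(V,W)=\omega(V,V^{\omega})=0$ is automatic, and the rank identity combined with $V\cap K\subset W$ yields
\[\mathrm{rank}(V\cap K\cap W)=\mathrm{rank}(V\cap K)=\mathrm{rank}(V)+\mathrm{rank}(V^{\omega})-\dim \Sigma=\dim \Sigma-\dim M_0-\dim M_1,\]
verifying the weak dual pair conditions of Definition \ref{def : weak dual pair}.

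For $iii)$, the weak dual pair constructed in $ii)$ has $V\cap K\cap W=V\cap K$, whose leaves coincide with the fibres of $\mathrm{r}$ from $i)$. Proposition \ref{prop: reduction of dual pairs} $ii)$ therefore produces the factorization $\bt=\overline{\bt}\circ\mathrm{r}$ together with the desired dual pair on $(\overline{\Sigma},\mathrm{Gr}(\overline{\omega}))$, in a manner consistent with $i)$. The main obstacle is the computation in $ii)$: spotting that $K\subset V^{\omega}$ is precisely what makes $\mathrm{Gr}(\omega)^{\bt}$ admit the involutive complement $V$ in Corollary \ref{cor : forms} $iii)$, and is simultaneously the reason the pushforward is Poisson rather than merely Dirac. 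Everything else is assembly from results of the preceding sections.
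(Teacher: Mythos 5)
Your proposal is correct and follows the route the paper intends (the paper offers no written proof, only the pointer to Proposition \ref{prop: reduction of dual pairs} and Lemma \ref{lem : poisson}; your assembly from Corollary \ref{cor : forms} and Proposition \ref{prop: reduction of dual pairs} is the same argument, with the Poissonness of the second leg extracted from Corollary \ref{cor : forms} $i)$ and the inclusion $K\subset V^{\omega}=W$ rather than from Lemma \ref{lem : poisson} -- an immaterial variation). The computation $\mathrm{Gr}(\omega)^{\bt}=W+\mathcal{R}_{\omega}(V)$ via $(V^{\omega})^{\omega}=V+K$ and $K\subset W$ is exactly the right key step, and the rank bookkeeping for the weak dual pair condition checks out.

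One slip worth fixing: you assert that $V\cap K$ is the kernel of the bundle map $V\to V^*$, $v\mapsto \omega(v,\cdot)|_V$; that kernel is $V\cap V^{\omega}$, which in general strictly contains $V\cap K$ (e.g.\ $V$ Lagrangian in a symplectic $\Sigma$). The correct map is $V\to T^*\Sigma$, $v\mapsto \iota_v\omega$, whose kernel is $V\cap K$; since your rank identity already shows this kernel has constant rank, smoothness follows as you intended. With that correction the argument is complete.
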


\begin{example}
Consider the forward Dirac map $\bs:(\mathbb{R}^3,\mathrm{Gr}(\omega))\to (\mathbb{R}^2,L_0)$ from Example \ref{example : pre-dual pair but not weak dual pair}. Then $\bs$ is a presymplectic realization, i.e.\ $V\cap K=0$, and $V^{\omega}$ is the simple foliation corresponding to the fibres of $\overline{\bt}:\mathbb{R}^3\to \mathbb{R}$, $\overline{\bt}(x,y,z):=y$. By Corollary \ref{coro : presymplectic reduction}, we obtain the dual pair (compare with the pre-dual pair of Example \ref{example : pre-dual pair but not weak dual pair}):
\[(\mathbb{R}^2,L_1) \stackrel{\bs}{\longleftarrow} (\mathbb{R}^3,\mathrm{Gr}(\omega)) \stackrel{\overline{\bt}}{\longrightarrow} (\mathbb{R},T^*\mathbb{R}).\]
\end{example}

\subsection*{Pullback along transverse maps}

In the Poisson setting, the procedure of pullback of a dual pair to small transversals to the symplectic leaves goes back to \cite[Theorem 8.1]{Weinstein83}, where it is shown that the induced transverse Poisson structures are anti-isomorphic. Here we discuss the procedure in the general Dirac setting.

We will use the following result.

\begin{lemma}\label{lem : transv}
Let $f:(M,L_M)\to (N,L_N)$ be a forward Dirac map, and let $g:X\to N$ be a smooth map that is transverse to $L_N$ (see \S 12). Then the following hold:
\begin{enumerate}[i)]
\item $f$ and $g$ are transverse maps, so that $M\times_{N}X$, i.e.\ the pullback of $f$ and $g$, is a smooth manifold;
\item $\mathrm{pr}_1:M\times_{N}X\to M$ is transverse to $L_M$, so that $\mathrm{pr}_1^{!}(L_M)$ is a Dirac structure on $M\times_{N}X$;
\item $\mathrm{pr}_2:(M\times_{N}X,\mathrm{pr}_1^{!}(L_M))\to (X,g^{!}(L_N))$ is forward Dirac. Moreover, if $f$ is a strong map, then also $\mathrm{pr}_2$ is a strong map.
\end{enumerate}
\end{lemma}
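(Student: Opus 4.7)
\medskip
\noindent\textbf{Proof plan.} The plan is to establish $i)$--$iii)$ in sequence, using the forward Dirac condition on $f$ --- which gives $\mathrm{pr}_T(L_N) = f_*(\mathrm{pr}_T L_M) \subseteq f_*(TM)$ at matching points --- in tandem with the defining equation $g_*(T_xX) + \mathrm{pr}_T(L_{N,g(x)}) = T_{g(x)}N$ of $g$-transversality.

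For $i)$ and $ii)$: combining the two relations above yields $f_*(TM) + g_*(TX) = TN$, i.e.\ transversality of $f$ and $g$, so that $M\times_N X$ is smooth; and dually, any $u \in T_yM$ splits as $u = u_1+u_2$ with $u_2 \in \mathrm{pr}_T L_{M,y}$ and $f_*u_1 \in g_*(T_xX)$ --- so $u_1 \in (\mathrm{pr}_1)_*(T_{(y,x)}(M\times_N X))$ --- proving $\mathrm{pr}_1$ transverse to $L_M$, whence $\mathrm{pr}_1^!(L_M)$ is a Dirac structure by \S 12.

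For $iii)$, the inclusion $g^!(L_N)_x \subseteq (\mathrm{pr}_2)_!(\mathrm{pr}_1^!(L_M)_{(y,x)})$ is immediate: given $b = v+g^*\eta$ with $g_*v+\eta \in L_N$, forward Dirac of $f$ supplies $u$ with $f_*u = g_*v$ and $u+f^*\eta \in L_M$; since $f\circ\mathrm{pr}_1 = g\circ\mathrm{pr}_2$, the element $a:=(u,v) + \mathrm{pr}_1^*(f^*\eta) = (u,v) + \mathrm{pr}_2^*(g^*\eta) \in \mathrm{pr}_1^!(L_M)$ is $\mathrm{pr}_2$-related to $b$. For the reverse inclusion, starting from $a=(u,v)+\mathrm{pr}_1^*\beta \sim_{\mathrm{pr}_2} v+\gamma$, the annihilator considerations on $\ker f_*$ and $\ker g_*$ force $\beta=f^*\eta$ and $\gamma=g^*\tilde\zeta$, while $\mathrm{pr}_1^*\beta = \mathrm{pr}_2^*\gamma$ forces $\tilde\zeta-\eta$ to vanish on $f_*(TM)\cap g_*(TX)$. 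Since $\mathrm{pr}_T(L_N)\cap g_*(TX) \subseteq f_*(TM)\cap g_*(TX)$, the standard identity
\[
(\mathrm{pr}_T(L_N)\cap g_*(TX))^\circ = (\mathrm{pr}_T(L_N))^\circ + (g_*(TX))^\circ
\]
lets one correct $\tilde\zeta$ by an element of $\ker g^* = (g_*(TX))^\circ$ into an $\eta'$ with $g^*\eta'=\gamma$ and $g_*v+\eta' \in L_N$, giving $v+\gamma \in g^!(L_N)_x$.

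The strongness statement runs along similar lines: if $(u,0) \in \mathrm{pr}_1^!(L_M) \cap \ker(\mathrm{pr}_2)_*$, one extracts $u \in \ker f_*$, $\eta$ with $u+f^*\eta \in L_M$, $\eta$ annihilating $f_*(TM)\cap g_*(TX)$, and $\eta \in (\mathrm{pr}_T L_N)^\circ$ by forward Dirac. Now $g$-transversality enters decisively: every $w \in f_*(TM)$ writes as $w = g_*v' + w_1$ with $w_1 \in \mathrm{pr}_T L_N \subseteq f_*(TM)$, forcing $g_*v' \in f_*(TM)\cap g_*(TX)$, so $f_*(TM) \subseteq \mathrm{pr}_T(L_N) + (f_*(TM)\cap g_*(TX))$ and $\eta$ vanishes on all of $f_*(TM)$. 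Thus $f^*\eta = 0$, so $u \in L_M \cap \ker f_*$, and strongness of $f$ forces $u = 0$. The main technical point throughout is the clean use of $g$-transversality to align $\mathrm{pr}_T(L_N)$ with $f_*(TM)$ and $g_*(TX)$, but this amounts to purely linear algebra.
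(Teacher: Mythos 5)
Your proposal is correct, and for parts $i)$ and $ii)$ it coincides with the paper's argument: both combine $\mathrm{pr}_T(L_{N})=f_*(\mathrm{pr}_T L_{M})\subset f_*(TM)$ with $g$-transversality to get $f\trans g$, and both prove $ii)$ by splitting $f_*(v)=g_*(u)+f_*(w)$ with $w\in\mathrm{pr}_T(L_M)$. The divergence is in $iii)$. For the forward-Dirac claim the paper simply cites \cite[Lemma 3a)]{PT1.5}, whereas you prove it directly; note that your reverse inclusion is actually redundant, since $\mathrm{pr}_{2!}(\mathrm{pr}_1^!(L_M))$ and $g^!(L_N)$ are both Lagrangian, so the easy inclusion already forces equality --- and your correction of $\tilde\zeta$ by an element of $(g_*TX)^{\circ}$ can be short-circuited by the exactness of $0\to T^*_nN\to T^*_mM\oplus T^*_xX\to T^*_{(m,x)}(M\times_NX)\to 0$, which produces a single $\eta$ with $\beta=f^*\eta$ and $\gamma=g^*\eta$ at once. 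For the strongness claim the paper uses that same exact sequence to write the covector as $f^*\beta$ with $g^*\beta=0$, concludes $\beta\in L_N\cap\ker g^*=0$ by transversality, and hence $\beta=0$; you instead only show $f^*\eta=0$, via the decomposition $f_*(TM)\subset\mathrm{pr}_T(L_N)+\bigl(f_*(TM)\cap g_*(TX)\bigr)$, which is a valid and slightly more hands-on alternative (it never identifies $\eta$ itself as zero, but that is not needed). Both routes are purely linear-algebraic and of comparable length; the paper's is marginally cleaner because the exact sequence packages all the annihilator bookkeeping in one step.
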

\begin{proof}
Throughout, $(m,x)$ will denote a point in $M\times_NX$, so that $n:=f(m)=g(x)$. Using that $g$ is transverse to $L_N$ and that $f$ is forward Dirac, for , we obtain:
\[T_nN=g_*(T_xX)+\mathrm{pr}_T(L_{N,n})=g_*(T_xX)+\mathrm{pr}_T(f_{!}L_{M,m})\subset g_*(T_xX)+f_*(T_mM),\]
which proves $i)$.

For $ii)$, let $v\in T_mM$. By item $i)$ above, we can decompose $f_*(v)=g_*(u)+f_*(w)$, with $u\in T_xX$ and $w+\alpha\in L_{M,m}$ for some $\alpha\in T^*_mM$. Then $(v-w,u)\in T_{(m,x)}(M\times_NX)$ and $v=\mathrm{pr}_1(v-w,u)+\mathrm{pr}_T(w+\alpha)$. This proves $ii)$.

The first part of $iii)$ is proven in \cite[Lemma 3a)]{PT1.5}. Finally, assume that $f$ is a strong forward Dirac map. Consider $(v,0)\in T_{(m,x)}(M\times_NX)\cap \mathrm{pr}_1^!(L_M)$. Then $f_*(v)=g_*(0)=0$. Since $\mathrm{pr}_1$ is backward Dirac, there is $\alpha\in \ker \mathrm{pr}_1^*$ such that $v+\alpha\in L_{M,m}$. By transversality of $f$ and $g$, we have the short exact sequence:
\[0\rmap T^*_nN\stackrel{f^*-g^*}{\rmap}T^*_mM\oplus T^*_xX\stackrel{(\mathrm{pr}_1^*,\mathrm{pr}_2^*)}{\rmap}T^*_{(m,x)}(M\times_NX)\rmap 0.\]
Hence, there is $\beta\in T^*_nN$ such that $f^*(\beta)=\alpha$ and $g^*(\beta)=0$. Since $f$ is forward Dirac, we have that $f_*(v)+\beta=\beta\in L_{N,n} \cap \ker g^*$. Since $g$ is transverse to $L_N$, this yields $\beta=0$, and so $\alpha=0$. So $v\in L_{M,m}\cap \ker f_*$, and since $f$ is a strong map, $v=0$. We have shown that $\ker \mathrm{pr}_{2*}\cap \mathrm{pr}_1^!(L_M)=0$, i.e.\ $\mathrm{pr}_2$ is a strong map.
\end{proof}

Next, we present the general procedure of pulling back along transverse maps.

\begin{proposition}[Transverse pullback]\label{prop: transverse_pullback}
Consider a weak dual pair
\[(M_0,L_0) \stackrel{\bs}{\lmap} (\Sigma,\mathrm{Gr}(\omega)) \stackrel{\bt}{\rmap} (M_1,-L_1).\]
For $j=0,1$, consider a smooth map $i_j:X_j\to M_j$ that is transverse to $L_j$. Denote
\[\Sigma_X:=X_0\times_{M_0}\Sigma\times_{M_1}X_1,\ \ \ \omega_X:=\mathrm{pr}_2^*\omega\]
\[\bs_X=\mathrm{pr}_1:\Sigma_X\to X_0,\ \ \ \bt_X=\mathrm{pr}_3:\Sigma_X\to X_1.\]
Then the diagram
\[(X_0,i_0^!(L_0)) \stackrel{\bs_X}{\lmap} (\Sigma_X,\mathrm{Gr}(\omega_X)) \stackrel{\bt_X}{\rmap} (X_1,-i_1^!(L_1))\]
satisfies all axioms of a weak dual pair, except maybe for the surjectivity of the maps $\bs_X$ and $\bt_X$. The same holds for dual pairs instead of weak dual pairs.
\end{proposition}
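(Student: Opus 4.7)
The plan is to reduce the whole statement to a single invocation of Lemma \ref{lem : transv}. To that end, I first package the weak dual pair (respectively dual pair) into the combined map
\[(\bs,\bt):(\Sigma,\mathrm{Gr}(\omega)) \rmap (M_0 \times M_1, L_0 \times -L_1),\]
which is a forward Dirac map by Proposition \ref{pro : equivalent wdp} $iii)$, and in the dual pair case is in fact a presymplectic realization by Proposition \ref{pro : equivalent dual pair} $iii)$. On the target, the product map $i_0 \times i_1: X_0 \times X_1 \to M_0 \times M_1$ is transverse to $L_0 \times -L_1$ because each $i_j$ is transverse to $L_j$. The fiber product appearing in Lemma \ref{lem : transv} is precisely $\Sigma_X = \Sigma \times_{M_0 \times M_1}(X_0 \times X_1)$, and its canonical pullback Dirac structure is $\mathrm{pr}_\Sigma^!(\mathrm{Gr}(\omega)) = \mathrm{Gr}(\mathrm{pr}_\Sigma^*\omega) = \mathrm{Gr}(\omega_X)$.

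From Lemma \ref{lem : transv} I then read off: $\Sigma_X$ is smooth; the map $(\bs_X,\bt_X) = (\mathrm{pr}_1,\mathrm{pr}_3): \Sigma_X \to X_0 \times X_1$ is forward Dirac onto $(X_0 \times X_1, (i_0 \times i_1)^!(L_0 \times -L_1))$; and, in the dual pair case, this map is strong, hence a presymplectic realization. A short pointwise check identifies $(i_0 \times i_1)^!(L_0 \times -L_1)$ with $i_0^!(L_0) \times -i_1^!(L_1)$, and composing $(\bs_X,\bt_X)$ with the canonical forward Dirac projections onto each factor of this product Dirac manifold shows that $\bs_X$ and $\bt_X$ are themselves forward Dirac.

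The one step which is not immediate from Lemma \ref{lem : transv} is that $\bs_X$ and $\bt_X$ are submersions; I will derive this from the weak dual pair condition via the algebraic equivalences in Lemma \ref{lem : equivalent weak}. From $W^{\omega} = V + W \cap K$ and the fact that $\bt$ is forward Dirac, one obtains $\mathrm{pr}_T(-L_1) = \bt_*(W^{\omega}) = \bt_*V$; transversality of $i_1$ to $L_1$ then yields $\bt_*V + (i_1)_*TX_1 = TM_1$. Given $u_0 \in T_{x_0}X_0$, I first lift $(i_0)_*u_0$ through the submersion $\bs$ to some $v \in T_p\Sigma$, and then adjust $v$ by an element of $V = \ker \bs_*$ so that $\bt_*v$ lands in $(i_1)_*TX_1$; this produces a tangent vector to $\Sigma_X$ projecting onto $u_0$. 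The argument for $\bt_X$ is symmetric, and this is the main — indeed the only really nontrivial — obstacle in the proof.

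It remains to assemble the data into a (weak) dual pair. For the weak case I invoke Proposition \ref{pro : equivalent wdp} $iv)$: the forward Dirac property of $(\bs_X,\bt_X)$ is the conclusion of Lemma \ref{lem : transv} $iii)$ above, while the equality $\bs_X^!(i_0^!(L_0)) = \mathcal{R}_{\omega_X}(\bt_X^!(i_1^!(L_1)))$ follows by applying $\mathrm{pr}_\Sigma^!$ to the identity $\bs^!(L_0) = \mathcal{R}_\omega(\bt^!(L_1))$ for the original pair, together with the naturality identity $\mathrm{pr}_\Sigma^! \circ \mathcal{R}_\omega = \mathcal{R}_{\omega_X} \circ \mathrm{pr}_\Sigma^!$, which is a direct unwinding of the definitions using $\mathrm{pr}_\Sigma^*\omega = \omega_X$. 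The dual pair case is then handled by Proposition \ref{pro : equivalent dual pair} $iii)$: $(\bs_X,\bt_X)$ is already a presymplectic realization, and the transversality dimension formula
\[\dim \Sigma_X = \dim \Sigma + \dim X_0 + \dim X_1 - \dim M_0 - \dim M_1,\]
combined with $\dim \Sigma = \dim M_0 + \dim M_1$, gives $\dim \Sigma_X = \dim X_0 + \dim X_1$, as required.
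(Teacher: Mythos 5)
Your proof is correct and follows essentially the same strategy as the paper: apply Lemma \ref{lem : transv} to the combined map $(\bs,\bt)$ and the transverse map $i_0\times i_1$, establish by hand that $\bs_X,\bt_X$ are submersions using $W^{\omega}=V+W\cap K$ together with transversality of $i_1$ and forwardness of $\bt$, and conclude the dual pair case via the presymplectic realization criterion and the dimension count. The only (immaterial) deviation is that you verify the weak dual pair axioms through characterization $iv)$ of Proposition \ref{pro : equivalent wdp}, transporting the gauge identity along $\mathrm{pr}_2^!$, whereas the paper uses characterization $iii)$, noting that the $\omega_X$-orthogonality of the fibres of $\bs_X$ and $\bt_X$ holds by construction.
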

\begin{proof}
We apply Lemma \ref{lem : transv} to the pair of maps: \[(\bs,\bt):(\Sigma,\mathrm{Gr}(\omega))\rmap (M_0,L_0)\times (M_1,-L_1), \]
\[i_0\times i_1:X_0\times X_1\rmap M_0\times M_1.\]
We conclude that $\Sigma_X$ is a smooth manifold, and that
\[(\bs_X,\bt_X):(\Sigma_X,\mathrm{Gr}(\omega_X))\rmap (X_0,i_0^!(L_0))\times (X_1,-i_1^!(L_1))\]
is a forward Dirac map.

Next, we show that $\bs_X$ and $\bt_X$ are submersions. We use the notation $V:=\ker\bs_*$, $W:=\ker \bt_*$ and $K:=\ker\omega$. Let $(x_0,z,x_1)\in \Sigma_{X}$, and let $v_0\in T_{x_0}X_0$. Since $\bs$ is a submersion, there is $w_0\in T_{z}\Sigma$ such that $\bs_*(w_0)=i_{0*}(v_0)$. Since $i_1$ and $L_1$ are transverse, we can decompose $\bt_*(w_0)=i_{1*}(v_1)+u$, where $u+\alpha\in L_{1,m_1}$ and $m_1=i_1(x_1)$. Since $\bt$ is forward Dirac, there is $w_1\in T_{z}\Sigma$ such that $\bt_*(w_1)=u$ and $\iota_{w_1}\omega=-\bt^*(\alpha)$. This implies that $w_1\in W^{\omega}$. By Lemma \ref{lem : equivalent weak} (c), $W^{\omega}=V+W\cap K$, so we can decompose $w_1=w_2+w_3$ with $w_2\in V$ and $w_3\in W$. Hence $\bt_*(w_2)=\bt_*(w_1)=u$ and $\bs_*(w_2)=0$. So the element $w=w_0-w_2$ satisfies $\bs_*(w)=i_{0*}(v_0)$ and $\bt_*(w)=i_{1*}(v_1)$, thus $(v_0,w,v_1)\in T_{(x_0,z,x_1)}\Sigma_X$. This shows that $\bs_X$ is a submersion, and similarly, one shows that also $\bt_X$ is a submersion.

Since the fibres of $\bs$ and $\bt$ are $\omega$-orthogonal, by construction it follows that also the fibres of $\bs_X$ and $\bt_X$ are $\omega_X$-orthogonal; hence it follows from Proposition \ref{pro : equivalent wdp} $iii)$ that the transverse pullback
\[(X_0,i_0^!(L_0)) \stackrel{\bs_X}{\lmap} (\Sigma_X,\mathrm{Gr}(\omega_X)) \stackrel{\bt_X}{\rmap} (X_1,-i_1^!(L_1))\]is a weak dual pair.

Assume now that the weak dual pair in the statement is a dual pair. This is equivalent by Proposition \ref{pro : equivalent dual pair} $iii)$ to $(\bs,\bt)$ being a presymplectic realization and $\dim \Sigma = \dim M_0+\dim M_1$. In that case, it follows from Lemma \ref{lem : transv} that $(\bs_X,\bt_X)$ is also a presymplectic realization, and transversality yields the dimension condition
\begin{align*}
\mathrm{dim}(\Sigma_X)&=\mathrm{dim}(\Sigma)+\mathrm{dim}(X_0)+\mathrm{dim}(X_1)-\mathrm{dim}(M_0)-\mathrm{dim}(M_1)\\
&=\mathrm{dim}(X_0)+\mathrm{dim}(X_1).
\end{align*}Invoking Proposition \ref{pro : equivalent dual pair} $iii)$ once again, we conclude that the transverse pullback of a dual pair is again a dual pair.
\end{proof}

\section{On the existence of self-dual pairs}\label{sec : On the existence of self-dual pairs}

An important ingredient of our Main Theorem is the following notion:

\begin{definition}\label{def: spray}
Let $L \subset \TT M$ be a Dirac structure on $M$, and let $\bs:L \to M$ denote the bundle projection. A \textbf{spray} for $L$ is a vector field $\mathcal{V} \in \mathfrak{X}(L)$, satisfying:
\begin{itemize}
 \item[(Spr1)] $\bs_{\ast}\mathcal{V}_a = \mathrm{pr}_{T}(a)$, for all $a \in L$;
 \item[(Spr2)] $m_t^{\ast}\mathcal{V}=t\mathcal{V}$, where $m_t:L\to L$ denotes multiplication by $t\neq 0$.
\end{itemize}
\end{definition}
For example, given a linear connection on $L$ with horizontal lift $h$, the vector field $\mathcal{V}_a:=h_a(\mathrm{pr}_{T}(a))$ is a spray for $L$.

Given a spray $\mathcal{V}$ for $L$, note that (Spr2) implies that $\mathcal{V}$ vanishes along $M$, identified with the zero-section of $L$. Therefore, there exists a small enough neighborhood $\mathcal{U}\subset L$ of $M$ on which the flow $\varphi_{\epsilon}:\mathcal{U} \to L$ of $\mathcal{V}$ is defined for $0\leq \epsilon\leq 1$, and consider
 \[\omega:=\int_0^1\varphi_{\epsilon}^{\ast}\omega_L \mathrm{d}\epsilon \ \in \ \Omega^2(\mathcal{U}), \quad \bt:=\bs \circ \varphi_1:\mathcal{U} \to M,
 \]
where $\omega_L \in \Omega^2(L)$ denotes the pullback of the canonical two-form on $T^*M$ under $\mathrm{pr}_{T^*}:L \to T^*M$. With this notation, we state:

\begin{maintheorem}\label{thm : Dirac dual pairs}
There is an open set $\Sigma\subset \mathcal{U}$ containing $M$, such that the diagram
\[(M,L) \stackrel{\bs}{\lmap} (\Sigma,\mathrm{Gr}(\omega)) \stackrel{\bt}{\rmap} (M,-L)\]
forms a dual pair.
\end{maintheorem}

The proof will make use of the following

\begin{lemma}\label{lem : flow linearized}
\begin{enumerate}[i)]
\item The flow $\varphi_{\epsilon}$ is the identity along $M$, and in the canonical decomposition $TL|_M=TM \oplus L$, its differential reads
\[
\varphi_{\epsilon*}:TL|_M \diffto TL|_M, \quad \varphi_{\epsilon*}(u,a) = (u+\epsilon \mathrm{pr}_T(a),a).
\]
\item We have that $(\ker \bs_* \cap \ker \omega \cap \ker \bt_*)|_M = 0$.
\end{enumerate}
\end{lemma}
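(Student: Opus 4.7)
For part (i), the plan is to first note that (Spr2) forces $\mathcal{V}|_M = 0$: indeed, setting $a = 0_x$ (the zero of $L_x$) in the homogeneity relation $\mathcal{V}_{ta} = t\,(m_t)_\ast \mathcal{V}_a$, or equivalently taking the limit $t \to 0$, one sees that the spray vanishes on the zero section. Consequently the flow $\varphi_\epsilon$ fixes $M$ pointwise, and its differential along $M$ is obtained by exponentiating the linearization: $\varphi_{\epsilon\ast}|_{T_xL} = \exp(\epsilon\, D\mathcal{V}_x)$. It remains to compute $D\mathcal{V}_x$ in the canonical splitting $T_xL = T_xM \oplus L_x$. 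On horizontal directions $(u,0)$ it vanishes, since $\mathcal{V}|_M = 0$. On a vertical direction $(0,a)$, I would differentiate the identity $\mathcal{V}_{ta} = t(m_t)_\ast \mathcal{V}_a$ at $t=0$ (in a local trivialization, so that only the horizontal part survives in the limit) and use (Spr1) $\bs_\ast \mathcal{V}_a = \mathrm{pr}_T(a)$ to conclude $D\mathcal{V}_x(0,a) = (\mathrm{pr}_T(a),0)$. Hence $D\mathcal{V}_x$ is nilpotent of square zero and $\exp(\epsilon D\mathcal{V}_x) = I + \epsilon D\mathcal{V}_x$, yielding the displayed formula.

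For part (ii), I would exploit (i) directly. In the splitting $T_xL = T_xM \oplus L_x$, part (i) gives $\bs_\ast(u,a) = u$ and $\bt_\ast(u,a) = \bs_\ast \varphi_{1\ast}(u,a) = u + \mathrm{pr}_T(a)$. Thus any element of $(\ker \bs_\ast \cap \ker \bt_\ast)|_x$ has the form $(0,\xi)$ with $\xi \in L_x$ and $\mathrm{pr}_T(\xi) = 0$, i.e.\ $\xi \in L_x \cap T_x^\ast M$. It remains to test $\omega|_x$ on such an element. I would compute $\omega_L|_x$ by pulling back the canonical two-form on $T^\ast M$ via $\mathrm{pr}_{T^*}$, noting that at the zero section $(x,0) \in T^\ast M$ the canonical form reads $((u_1,\eta_1),(u_2,\eta_2))\mapsto \eta_1(u_2) - \eta_2(u_1)$ in the natural splitting. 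Then I would apply $\varphi_\epsilon^\ast$ using the formula from (i) and integrate over $\epsilon \in [0,1]$. Evaluating the resulting bilinear form on $(0,\xi)$ against an arbitrary $(u_2,a_2) \in T_xL$ and using $\mathrm{pr}_T(\xi) = 0$ collapses it to $\xi(u_2) + \tfrac12\xi(\mathrm{pr}_T(a_2))$; setting $a_2 = 0$ and letting $u_2$ range over $T_xM$ forces $\xi = 0$, proving (ii).

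The only delicate step is the computation of $D\mathcal{V}_x$ in part (i): the limit of $(m_t)_\ast\mathcal{V}_a$ as $t \to 0$ must be taken in a local trivialization to confirm that only the horizontal piece of $\mathcal{V}_a$ survives, while vertical contributions vanish to higher order in $t$. Once (i) is in hand, part (ii) is essentially pointwise linear algebra, and it is worth remarking that the argument for (ii) does not require the Lagrangian condition on $L$—that hypothesis enters only later, when the full dual-pair structure of $(\bs,\omega,\bt)$ is established.
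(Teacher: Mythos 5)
Your proposal is correct and follows essentially the same route as the paper: part (i) is the paper's tangent-lift argument phrased as the linearization $D\mathcal{V}_x$ of the spray at its zero section (computed from (Spr1)--(Spr2) and exponentiated, with the same nilpotency observation), and part (ii) is the identical pointwise computation of $\bs_*$, $\bt_*$ and the integrated form $\omega$ along $M$.
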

\begin{proof}
$i)$ Since $\mathcal{V}$ vanishes along $M$, we have $\varphi_{\epsilon}|_M=\mathrm{id}_M$. Let $\mathcal{V}_T \in \mathfrak{X}(TL)$ be the vector field generating the flow $\varphi_{\epsilon*}:TL \diffto TL$ (also called the \emph{tangent lift} of $\mathcal{V}$). Due to (Spr2), $\mathcal{V}_T$ is tangent to $T_{x}L\subset TL$ for each $x \in M$, and $\mathcal{V}_T|_{T_{x}L} \in \mathfrak{X}(T_{x}L)$ is a linear vector field. By (Spr1), this vector field corresponds to the endomorphism
\[T_xM \oplus L_x \to T_xM \oplus L_x, \quad (u,a) \mapsto (\mathrm{pr}_T(a),0);\]
    the exponential of this map, i.e.\ $\varphi_{\epsilon*}$ on $T_xM\oplus L_x$, is given by the formula in the statement. For a different proof, see \cite[Lemma 3.21]{BCL-D}.

$ii)$ Along the zero section $M \subset L$, the two-form $\omega_L \in \Omega^2(L)$ reads:
\[\omega_L\left((u,v+\eta),(u',v'+\eta')\right)=\eta'(u)-\eta(u').\]
On the other hand, by item $i)$, we have
\begin{equation*}
  \varphi_{\epsilon\ast}(u,v+\eta)= (u+\epsilon v,v+\eta), \ \ (u,v+\eta)\in TL|_M = TM\oplus L;
\end{equation*}
hence
\begin{align}\label{eq : form}
\omega\left((u,v+\eta),(u',v'+\eta')\right)=\eta'(u+1/2v)-\eta(u'+1/2v').
\end{align}
Let $(u,v+\eta)\in \ker \bs_* \cap \ker \omega \cap \ker \bt_*|_M$. Since $\bs_{*}(u,v+\eta)=u$ and
\begin{align*}
\bt_{*}(u,v+\eta)=\bs_{*}\varphi_{1*}(u,v+\eta)=\bs_{*}(u+v,v+\eta)=u+v,
\end{align*}
it follows that $u=0$ and $v=0$. By formula (\ref{eq : form}),
\[0=\omega((u,v+\eta),(u',0))=-\eta(u')\]
for all $u'$; thus, also $\eta=0$.\qedhere
\end{proof}

\begin{proof}[Proof of Main Theorem]
Let $\mathcal{U}\subset L$ be a neighborhood of $M$ on which $\varphi_{\epsilon}$ is defined for $0\leq \epsilon\leq 1$, and so also $\bt=\bs \circ \varphi_1$ and $\omega=\int_0^1\varphi_{\epsilon}^*\omega_{L} \dd\epsilon$ are defined on $\mathcal{U}$. Write $V,W,K \subset T\mathcal{U}$ for $\ker \bs_*,\ker \bt_*$ and $\ker \omega$, respectively. By Lemma \ref{lem : flow linearized}$ii)$, $(V\cap K \cap W)|_M=0$. Sine this condition is open, we may assume that
\begin{equation}\label{inca una}
(V\cap K \cap W)|_{\Sigma}=0,
\end{equation}
where $\Sigma\subset \mathcal{U}$ is an open neighborhood of $M$.

Consider the pullback to $L$ of the tautological one-form $\lambda_{\mathrm{can}}$ on $T^*M$:
\[\lambda_L:=(\mathrm{pr}_{T})^*\lambda_{\mathrm{can}}\in \Omega^1(L).\]
Note that condition (Spr1) in the definition of a spray $\mathcal{V}$ implies that $\mathcal{V}+\lambda_L$ is a section of the Dirac structure $\bs^!(L)$. The local flow $\Phi_{\epsilon}$ of the section $\mathcal{V}+\lambda_L$ on $\bs^!(L)$ covers the local flow $\varphi_{\epsilon}:L \to L$ and, on its domain of definition, it is given by (e.g.\ \cite[Proposition 2.3]{Marco}):
\[\Phi_{\epsilon}=\varphi_{\epsilon*}\circ \mathcal{R}_{B_{\epsilon}}:\bs^!(L)\rmap \bs^!(L),\]
where $B_{\epsilon}=\int_0^{\epsilon} \varphi_{s}^{\ast}\mathrm{d}\lambda_L \mathrm{d}s=-\int_0^{\epsilon} \varphi_{s}^{\ast}\omega_L \mathrm{d}s$. Since $B_1=-\omega$, we have that: \[\bs^!(L)|_{\varphi_{1}(\Sigma)}=\Phi_{1}\left(\bs^!(L)|_{\Sigma}\right)=\varphi_{1!}(\mathcal{R}_{-\omega}(\bs^!(L)|_{\Sigma})),\]
and therefore
\begin{equation}\label{eq : gauge condition of main theorem}
\mathcal{R}_{-\omega}(\bs^!(L)|_{\Sigma})=\varphi_1^!(\bs^!(L)|_{\varphi_1(\Sigma)})=\bt^!(L)|_{\Sigma}.
\end{equation}
Since $\dim \Sigma = 2 \dim M$ and equations (\ref{inca una}) and (\ref{eq : gauge condition of main theorem}) hold, Proposition \ref{pro : equivalent dual pair} $ii)\Rightarrow i)$ yields the conclusion.
\end{proof}

\section{Application: Normal forms around Dirac transversals}\label{sec : dirac transversals}

In this section, we use the Main Theorem to prove the normal form theorem around Dirac transversals (from \cite{BLM}) The same strategy is already present in \cite{PT1} in the context of Poisson geometry.

\begin{definition}\label{def : dirac transversal}
An embedded submanifold $X$ of a Dirac manifold $(M,L)$ is called a \emph{Dirac transversal} if the inclusion $i:X\to M$ is transverse to $L$.
\end{definition}

The normal form theorem around Dirac transversals states that, up to diffeomorphisms and exact gauge transformations, the induced Dirac structure on the transversal determines the Dirac structure around the transversal. The Poisson geometric version of this result appeared in \cite{PT1}; a local version for Dirac structures occurs in \cite{Blo}; a version for generalized complex structures was described in \cite{BCL-D}. Moreover, the same statement as the one below, but with a different proof, appeared in \cite{BLM}:

\begin{theorem*}[Normal form around Dirac transversals]\label{thm : Normal form for Dirac transversals}
Let $i:X \to (M,L)$ be a Dirac transversal, and let $p:NX\to X$ be the normal bundle of $X$. Then there exist an embedding $\varphi : U \to M$ of an open neighborhood $U \subset NX$ of $X$ extending $i$, and an exact two-form $\alpha\in \Omega^2(U)$, such that
 \[\varphi^!(L) = \mathcal{R}_{\alpha}(p^!i^!(L)).\]
\end{theorem*}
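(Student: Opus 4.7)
The plan is to deduce the theorem by restricting the self-dual pair from the Main Theorem to a subbundle of $L|_X$ which projects isomorphically onto $NX$. First, apply the Main Theorem to obtain a self-dual pair
\[(M,L)\stackrel{\bs}{\lmap}(\Sigma,\mathrm{Gr}(\omega))\stackrel{\bt}{\rmap}(M,-L)\]
on an open neighbourhood $\Sigma$ of $M$ inside $L$. Since $i:X\hookrightarrow M$ is transverse to $L$, one has $L\cap N^{*}X=0$, so $\pr_{T}:L|_{X}\to NX=TM|_{X}/TX$ is fibrewise surjective and its kernel $L^{X}:=\{a\in L|_{X}\mid\pr_{T}(a)\in TX\}$ has rank $\dim X$. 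Choose a subbundle $F\subset L|_{X}$ complementary to $L^{X}$; the restriction $\Phi:=\pr_{T}|_{F}:F\diffto NX$ is then a vector bundle isomorphism, which I use to identify $F$ with $NX$.

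The next step is to verify that $\bt|_{F}$ is a local diffeomorphism near $X$, thereby producing the map $\varphi$. At a point $x\in X\subset M\subset L$, Lemma \ref{lem : flow linearized} $i)$ gives the differential of $\bt=\bs\circ\varphi_{1}$ as $\bt_{\ast}(u,a)=u+\pr_{T}(a)$ in the splitting $T_{x}L=T_{x}M\oplus L_{x}$. On $T_{x}F=T_{x}X\oplus F_{x}$ this specialises to $(u,a)\mapsto u+\pr_{T}(a)$, an isomorphism onto $T_{x}M$ because $TX\oplus\pr_{T}(F)=TM|_{X}$. Shrinking to a small open neighbourhood $U\subset F$ of the zero section, one obtains an embedding $\varphi:=\bt|_{U}:U\hookrightarrow M$ with $\varphi|_{X}=i$, since $\bt$ fixes $M$. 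Via $\Phi$, $\varphi$ is regarded as an embedding of an open neighbourhood of $X$ in $NX$ extending $i$.

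The gauge equivalence is then extracted from the dual pair identity. By Proposition \ref{pro : equivalent dual pair} $ii)$, $\bs^{!}(L)=\mathcal{R}_{\omega}(\bt^{!}(L))$ on $\Sigma$. Pulling back along the inclusion $j:U\hookrightarrow\Sigma$ and using $\bt\circ j=\varphi$ and $\bs\circ j=i\circ p_{F}$, with $p_{F}:F\to X$ the bundle projection, yields
\[\varphi^{!}(L)=\mathcal{R}_{-j^{\ast}\omega}(p_{F}^{!}i^{!}(L)).\]
Under $\Phi$, the projection $p_{F}$ corresponds to $p$, so $p_{F}^{!}i^{!}(L)$ corresponds to $p^{!}i^{!}(L)$. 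To finish, I observe that $\omega$ is exact on $\Sigma$: since $\omega_{L}$ is the pullback to $L$ of the canonical exact two-form on $T^{\ast}M$, one has $\omega_{L}=\dd\mu$ globally on $L$, whence $\omega=\int_{0}^{1}\varphi_{\epsilon}^{\ast}\omega_{L}\,\dd\epsilon=\dd\!\int_{0}^{1}\varphi_{\epsilon}^{\ast}\mu\,\dd\epsilon$, and $\alpha:=-j^{\ast}\omega$ is the desired exact two-form on $U$. The only delicate step is the differential computation showing that $\bt|_{F}$ is a local diffeomorphism, which rests on the explicit linearisation of $\varphi_{1}$ along the zero section provided by Lemma \ref{lem : flow linearized}; once this is in hand, the remainder is a formal consequence of the dual pair identity and the exactness of $\omega_{L}$.
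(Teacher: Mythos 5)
Your proof is correct and follows essentially the same route as the paper: where the paper first restricts the self-dual pair to $\Sigma_0=\bs^{-1}(X)$ via the transverse-pullback proposition and then pulls back along a splitting $\rho:NX\to L|_X$ (the same data as your complement $F$), you pull the gauge identity $\bs^{!}(L)=\mathcal{R}_{\omega}(\bt^{!}(L))$ back along the inclusion of $F$ in a single step, using the same linearization lemma to see that $\bt\circ\rho$ is an open embedding near $X$. Your explicit verification that $\omega$, and hence $\alpha$, is exact is a point the paper leaves implicit.
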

\begin{proof}
Consider the dual pair given by the Main Theorem:
\[ (M,L) \stackrel{\mathbf{s}}{\longleftarrow} (\Sigma,\mathrm{Gr}(\omega)) \stackrel{\mathbf{t}}{\rmap} (M,-L).\]
We apply the pullback procedure of Proposition \ref{prop: transverse_pullback}, and obtain the diagram
\begin{equation}\label{eq : sigma 0 dual pair}
 (X,i^!(L)) \stackrel{\mathbf{s}_0}{\longleftarrow} (\Sigma_0,\mathrm{Gr}(\omega_0)) \stackrel{\mathbf{t}_0}{\rmap} (M,-L),
\end{equation}
where
\[\Sigma_{0}:=\bs^{-1}(X), \ \ \omega_0:=\omega|_{\Sigma_0},\ \ \bs_0:=\bs|_{\Sigma_0}, \ \ \bt_0:=\bt|_{\Sigma_0}.\]
Note that $\bs_0$ is surjective. By construction, the image of $\bt_0$ contains $X$ and it is open, because $\bt_0$ is a submersion. Therefore, by replacing $M$ by $\bt_0(\Sigma_0)$, we may assume that also $\bt_0$ is onto, and so, by Proposition \ref{prop: transverse_pullback}, we may assume that the diagram (\ref{eq : sigma 0 dual pair}) is a dual pair.

The fact that $X$ is a Dirac transversal implies that
\[
\widetilde{i}: i^!(L) \rmap L|_X, \quad \widetilde{i}(u+i^*\xi):=i_{*}(u)+\xi
\]
is a well-defined bundle map which fits into the exact sequence:
\[0 \rmap i^!(L) \stackrel{\widetilde{i}}{\rmap} L|_X \rmap NX \rmap 0.\]
Let $\rho:NX \to i^*(L)$ be a splitting of this sequence. By Lemma \ref{lem : flow linearized} $i)$, we have that
\[
\bt_{0_*} : T\Sigma_0|_X = TX \oplus L \rmap TM, \quad \bt_{0_*}(u,a) = u+\mathrm{pr}_T(a),
\]
and therefore there exists an open set $X \subset U \subset NX$ such that
\[
\varphi := \bt_0 \circ \rho :U \hmap M
\]
is an open embedding extending the inclusion $i$. Since $\bs_0 \circ \rho = p$ and $\varphi = \bt_0 \circ \rho$,
\begin{align*}
 p^!(i^!(L)) = \rho^!(\bs_0^!(i^!(L))) = \rho^!\left(\mathcal{R}_{\omega_0}(\bt_0^!(L))\right) = \mathcal{R}_{\rho^*\omega_0}(\varphi^!(L)),
\end{align*}
where we have used the relation $\bs_0^!(i^!(L)) = \mathcal{R}_{\omega_0}(\bt_0^!(L))$ from Proposition \ref{pro : equivalent dual pair} $ii)$. This implies the conclusion:
\[ \varphi^!(L) = \mathcal{R}_{\alpha}(p^!(i^!(L))), \ \ \textrm{where} \ \ \alpha:=-\rho^*\omega_0 \in \Omega^2(U).\qedhere\]
\end{proof}

\section{Further remarks}\label{sec : further}

\subsection*{The origin of the formula}

The formula for the two-form $\omega$ from the Main Theorem originates in the path-space approach to integrability of Lie algebroids, developed in \cite{CrFer03}. In a nutshell, given a Lie algebroid $A$ over a manifold $M$, the space $P(A)$ of $A$-paths carries a canonical \emph{homotopy foliation}, of finite codimension. The leaf space of this foliation has a canonical structure of topological groupoid $G(A) \rightrightarrows M$ (the so-called \emph{Weinstein groupoid} of $A$), which is smooth exactly when $A$ is integrable by a Lie groupoid. If the Lie algebroid is a Dirac structure $A=L$ on $M$, and moreover, if it is integrable, then $G(L)$ carries a canonical, multiplicative, closed two-form $\omega_{G(L)}$, for which the source and target maps $\bs,\bt:(G(L),\mathrm{Gr}(\omega_{G(L)}))\to (M,L)$ give a dual pair \cite{BCWZ}. However, even if $L$ is not integrable, the Banach manifold $P(L)$ of $L$-paths carries a canonical two-form $\omega_{P(L)}$ which is basic for the homotopy foliation, and, in the integrable case, it is the pullback of $\omega_{G(L)}$. Using the language developed in Section \ref{sec : properties}, in the integrable case the passage from $P(L)$ to $G(L)$ can be viewed as a reduction of an infinite dimensional weak dual pair to a finite dimensional dual pair:
 \[
\xymatrixrowsep{0.4cm}
\xymatrixcolsep{1.2cm}
\xymatrix{
 & (P(L),\mathrm{Gr}(\omega_{P(L)})) \ar[dr]^{\bt}\ar[dl]_{\bs}\ar[d]^{\pi} &  \\
 (M,L) & (G(L),\mathrm{Gr}(\omega_{G(L)})) \ar[r]^{\bt}\ar[l]_{\bs} &(M,-L)}\]

Now, a spray $\mathcal{V}$ on $L$ induces an exponential map $\exp_{\mathcal{V}}:\Sigma\to P(L)$ on a neighborhood $\Sigma\subset L$ of the zero-section, which is transverse to the homotopy foliation, and of complementary dimension. The two-form $\omega$ from our main result is the precisely the pullback of $\omega_{P(L)}$ via $\exp_{\mathcal{V}}$
(see the explicit formula of $\omega_{P(L)}$ from \cite[Section 5]{BCWZ}). In fact, along these lines, one can use results of \emph{loc.cit.} to give an alternative proof of our Main Theorem; however, such a proof is bound to be less elementary.

\subsection*{Local presymplectic groupoids}

Closely related to the dual pair constructed in our Main Theorem is the notion of \emph{local presymplectic groupoid}. First, by \cite[Corollary 5.1]{CrFer03}, any Lie algebroid is integrable by a local Lie groupoid, and second, by extending the results of \cite{BCWZ} to the setting of such local objects, one can prove the existence of a local presymplectic groupoid integrating a given Dirac manifold; and in particular, the existence of a self-dual pair.

However, the converse construction seems of more significance: one can endow the self-dual pair constructed in the Main Theorem with the structure of a local presymplectic groupoid, yielding a rather explicit construction of a local integration of a Dirac manifold; a simple version of this construction was shown to us by Eckhard Meinrenken \cite{Eckhardnotes}, which uses the comorphism $\Psi$ from Remark \ref{rem : bracket relations} to construct an action of $L\times-L$ of $\Sigma$, which corresponds to the left and right invariant vector fields on the local Lie groupoid $\Sigma\rightrightarrows M$. This is very much in the spirit of \cite{CDW}, where a local symplectic groupoid integrating a Poisson manifold is constructed by using a symplectic realization. For a general construction of local Lie groupoids using Lie algebroid sprays, see \cite{CMS}.

\subsection*{The twisted case}

Courant algebroids may be \emph{twisted} by closed three-forms. Specifically, let $\phi \in \Omega^3(M)$ be closed, and consider the {\bf $\phi$-twisted} Dorfman bracket
\begin{equation*}
 [u+\xi,v+\eta]_{\phi}:=[u+\xi,v+\eta]+\iota_u\iota_v\phi, \quad u+\xi, \ v+\eta \in \Gamma(\TT M),
\end{equation*}
and where $[u+\xi,v+\eta]$ stands for the Dorfman bracket of \S 2. A Lagrangian subbundle $L \subset \TT M$ (in the sense of \S 6) is called a {\bf $\phi$-twisted Dirac structure} if its space of sections is involutive under the $\phi$-twisted Dorfman bracket.

Twisted Dirac structures go back, in one form or another, to \cite{WZW,Park,Dima,SeveraWeinstein}. A crucial example of such structures is given by \emph{Cartan-Dirac structures} associated to nondegenerate, invariant inner products on the Lie algebra $\mathfrak{g}$ of a Lie group $G$ \cite[Example 4.2]{SeveraWeinstein}, and whose presymplectic realizations correspond to the quasi-Hamiltonian $\mathfrak{g}$-spaces of \cite{AMM} (see \cite{BCWZ}).

Twisted Dirac structures are in particular Lie algebroids, whose global objects correspond (in the integrable case) to the \emph{twisted presymplectic groupoids} of \cite{BCWZ}, or \emph{quasi-symplectic groupoids} in the language of \cite{Xu2}.

Our Main Theorem can be adapted to the twisted setting in a straightforward way \cite{Eckhard} (see also the comments in \cite[Section 4]{CrMar11}). We briefly indicate here the necessary modifications. The notions of \emph{forward/backward Dirac maps}, and of \emph{sprays} make sense for general Lagrangian subbundles (as pointwise conditions), and retain their meaning from the untwisted case. For example, if $L \subset \TT M$ is a $\phi$-twisted Dirac structure, and $\bs:\Sigma \to M$ is a submersion, then $\bs^!(L):=\{u+\bs^*(\xi) \ | \ \bs_*(u)+\xi \in L\}$ is a $\bs^*(\phi)$-twisted Dirac structure on $\Sigma$. The notion of \emph{dual pair}, however, needs to be adapted: if $(M_i,L_i,\phi_i)$ are twisted Dirac structures, a {\bf twisted dual pair} is a diagram of surjective, forward Dirac submersions
\[
 (M_0,L_0) \stackrel{\bs}{\lmap} (\Sigma,\omega) \stackrel{\bt}{\rmap} (M_1,-L_1)
\]
as in Definition \ref{def : dual pair}, but where now $\omega \in \Omega^2(\Sigma)$ is a two-form satisfying
\begin{equation*}
 \dd\omega = \bs^*(\phi_0) - \bt^*(\phi_1).
\end{equation*}
Lemma \ref{lem : flow linearized} remains valid \emph{ipsis litteris} in the twisted case, and if $\mathcal{V} \in \mathfrak{X}(L)$ is a spray for $L$ (in the sense that it satisfies conditions (Spr1) and (Spr2) of Definition \ref{def: spray}), then $\mathcal{V}+\lambda_L$ is a section of the $\bs^*(\phi)$-twisted Dirac structure $\bs^!(L)$ (where $\lambda_L$ is, as before, the pullback of the tautological one-form on $M$ via the projection $\mathrm{pr}_{T^*}:L \to T^*M$). Then $D=[\mathcal{V}+\lambda_L,\cdot]_{\bs^*(\phi)}$ corresponds to the derivation
\[D=(\mathcal{V},b) \in \mathfrak{X}(L) \oplus \Omega^2(L), \quad b:=\dd\lambda_L+\iota_{\mathcal{V}}\bs^*(\phi), \quad \mathscr{L}_\mathcal{V}\bs^*(\phi) = \dd b,
\]that is, it integrates to a linear automorphism
\[\Phi_{\epsilon}:\TT \mathcal{U}\rmap \TT \mathcal{U}, \ \ \Phi_{\epsilon}(u+\eta)=\varphi_{\epsilon *}(u)+\varphi_{\epsilon}^{*-1}(\iota_uB_{\epsilon}+\eta),\]which is orthogonal for the inner product of \S 2, and preserves the $\bs^*(\phi)$-twisted bracket; here $\mathcal{U} \subset L$ denotes (as before) an open neighborhood of $M$ on which the flow of $\mathcal{V}$ is defined up to time one, and $B_{\epsilon}$ denotes the two-form $B_{\epsilon}:=\int_0^{\epsilon} \varphi_s^*(b) \dd s$, which satisfies $\dd B_{\epsilon}=\varphi_{\epsilon}^*\bs^*(\phi) - \bs^*(\phi)$.

The condition that $[\Gamma(\bs^!(L)),\Gamma(\bs^!(L))]_{\bs^*(\phi)} \subset \Gamma(\bs^!(L))$ implies that $D\Gamma(\bs^!(L)) \subset \Gamma(\bs^!(L))$, and so $\Phi_{\epsilon}$ restricts to a Lie algebroid automorphism of $\bs^!(L)$. As in the proof of the Main Theorem, this implies that
\[
 \bs^!(L) = \ker \bs_* \oplus \mathcal{R}_{\omega}(\ker \bt_*), \quad \bt^!(L) = \mathcal{R}_{-\omega}(\ker \bs_*) \oplus \ker \bt_*,
\]where $\omega:=-B_1$, $\bt:=\bs \circ \varphi_1$. The twisted version of Proposition \ref{pro : equivalent dual pair} allows us to deduce that, on an open neighborhood $\Sigma \subset \mathcal{U}$ of $M$, the map $(\bs,\bt):(\Sigma,\mathrm{Gr}(\omega)) \to (M,L) \times (M,-L)$ is a presymplectic realization; we obtain a twisted dual pair:
\[
   (M,L) \stackrel{\bs}{\lmap} (\Sigma,\mathrm{Gr}(\omega)) \stackrel{\bt}{\rmap} (M,-L), \quad \dd \omega = \bs^*(\phi)-\bt^*(\phi).
  \]

\subsection*{Morita equivalence}

Closely related to the notion of dual pairs above is that of \emph{Morita equivalences}. This notion of equivalence was first introduced by P. Xu \cite{Xu0,Xu1} in the context of Poisson manifolds as a classical analogue of the algebraic notion for $C^*$-algebras developed by M. Rieffel \cite{Rieffel}.

Two Poisson manifolds $(M_0,\pi_0)$, $(M_1,\pi_1)$ are called Morita equivalent
if there exists a dual pair
\[
 (M_0,\mathrm{Gr}(\pi_0)) \stackrel{\bs}{\lmap} (\Sigma,\mathrm{Gr}(\omega)) \stackrel{\bt}{\rmap} (M_1,-\mathrm{Gr}(\pi_1))
\]with simply-connected fibres, in which the induced comorphisms\[\bs^*(\mathrm{Gr}(\pi_0)) \to T\Sigma, \quad \bt^*(\mathrm{Gr}(\pi_1)) \to T\Sigma\]are complete \cite[Definition 2.1]{Xu1}. This ensures that the Poisson manifolds are integrable (see Corollary \ref{coro: int}), that the symplectic leaves of $(M_0,\pi_0)$ and $(M_1,\pi_1)$ are in bijection (with the same first homology), that they have anti-isomorphic transverse Poisson structures, isomorphic algebras of Casimirs and modules of first Poisson cohomology \cite{GinzburgLu,Weinstein83}. More importantly, it ensures that Morita equivalent Poisson manifolds have equivalent 'categories' of complete symplectic realizations \cite[Theorem 3.3]{Xu1}.

Morita equivalence of Poisson manifolds can also be approached through symplectic groupoids: two (integrable) Poisson manifolds are Morita equivalent if their source-simply connected symplectic groupoids $(G_0,\omega_0) \rightrightarrows (M_0,\pi_0)$, $(G_1,\omega_1) \rightrightarrows (M_1,\pi_1)$ admit a symplectic $(G_0,G_1)$-bibundle which is biprincipal \cite{BursztynWeinstein,Landsman,Xu0,Xu1}.	This line of thought was greatly extended in \cite{Xu2}, which introduces Morita equivalence for quasi-symplectic groupoids (or twisted presymplectic groupoids in the sense of \cite{BCWZ}), which plays an analogous role for twisted Dirac manifolds as Morita equivalence of symplectic groupoids in Poisson geometry. As in the Poisson case, this notion only makes sense in the integrable case, but some recent 'stacky' versions of Morita equivalence forgo the integrability hypothesis \cite{BursztynWeinstein_Picard,BursztynWeinstein,BursztynNosedaZhu} (cf. also the weaker notion of Morita equivalence discussed in \cite[Section 6]{Ginzburg}).


\begin{thebibliography}{9}
\bibitem{AMM}A.~Alekseev, A.~Malkin, E.~Meinrenken, \emph{Lie group valued moment maps}, J. Diff. Geom. 48 (1998), 445--495.
\bibitem{ABM}A.~Alekseev, H.~Bursztyn, E.~Meinrenken, \emph{Pure Spinors on Lie groups}, Ast\'{e}risque Volume 327 (2009), 131--199.
\bibitem{BCL-D} M.~Bailey, G.~Cavalcanti, J.~v.d.~Leer Duran, \emph{Blow-ups in generalized complex geometry}, preprint, \url{https://arxiv.org/abs/1602.02076}.
\bibitem{CMS} A.~Cabrera, I.~M\u{a}rcu\cb{t}, M.~A.~Salazar, \emph{A construction of local Lie groupoids using Lie algebroid sprays}, preprint, \url{https://arxiv.org/abs/1703.04411}.
\bibitem{Blo}C.~Blohmann, \emph{Removable presymplectic singularities and the local splitting of Dirac structures}, International Mathematics Research Notices (2016), \url{https://doi.org/10.1093/imrn/rnw238}
\bibitem{BraFern2} O.~Brahic, R.L.~Fernandes, \emph{Integrability and Reduction of Hamiltonian Actions on Dirac Manifolds}, Indagationes Mathematic\ae \ Volume 25, Issue 5 (2014), 901--925.
\bibitem{BursztynRadko}H.~Bursztyn, O.~Radko, \emph{Gauge equivalence of Dirac structures and symplectic groupoids}, Annales de l'Institut Fourier Volume 53, Issue 1 (2003), 309--337.
\bibitem{BursztynWeinstein_Picard}H.~Bursztyn, A.~Weinstein, \emph{Picard groups in Poisson geometry}, Moscow Math. J., 4, (2004), 39--66.
\bibitem{BursztynWeinstein}H.~Bursztyn, A.~Weinstein, \emph{Poisson geometry and Morita equivalence}, Poisson geometry, deformation quantisation and group representations, London Math. Soc. Lecture Note Series 323, (2005), 1--78.
\bibitem{BCWZ}H.~Bursztyn, M.~Crainic, A.~Weinstein, C.~Zhu, \emph{Integration of twisted Dirac brackets}, Duke Math. J. Volume 123, Number 3, (2004), 549--607.
\bibitem{H}H.~Bursztyn, \emph{A brief introduction to Dirac manifolds}, Geometric and topological methods for quantum field theory, Cambridge Univ. Press, Cambridge (2013), 4--38.
\bibitem{BursztynNosedaZhu}H.~Bursztyn, F.~Noseda, C.~Zhu, \emph{Principal actions of stacky Lie groupoids}, preprint,

\url{https://arxiv.org/abs/1510.09208}.
\bibitem{BLM}H.~Bursztyn, H.~Lima, E.~Meinrenken, \emph{Splitting theorems for Poisson and related structures}, Journal f\"ur die reine und angewandte Mathematik (Crelles Journal) (2017)

\url{https://doi.org/10.1515/crelle-2017-0014}
\bibitem{MCF}I.~Calvo, F.~Falceto, M.~Zambon, \emph{Deformation of Dirac structures along isotropic subbundles}, Rep. Math. Phys. 65 no. 2 (2010), 259--269.
\bibitem{Weins_comorph}A.~Cattaneo, B.~Dherin, A.~Weinstein, \emph{Integration of Lie algebroid comorphisms}, Portugali\ae \ Mathematica Volume 70, Issue 2, 2013, pp. 113--144.
\bibitem{Courant}T.~J.~Courant, \emph{Dirac Manifolds}, Transactions of the American Mathematical Society, Vol. 319, No. 2 (Jun., 1990), 631--661.
\bibitem{CrFer03}M.~Crainic, R.L.~Fernandes, \emph{Integrability of Lie brackets}, Annals of Mathematics 157 (2003), 575--620.
\bibitem{CrFer04}M.~Crainic, R.L.~Fernandes, \emph{Integrability of Poisson brackets}, J.~Diff.~Geom. 66 (2004), 71--137.
\bibitem{CrMar11}M.~Crainic, I.~M\u{a}rcu\cb{t}, \emph{On the existence of symplectic realizations}, J.~Symplectic~Geom. 9, no. 4 (2011), 435--444.
\bibitem{CDW}A.~Coste, P.~Dazord, A.~Weinstein, \emph{Groupo\"ides symplectiques}, Publ.~D\'ep.~Math.~Nouvelle~Ser.~A 2 (1987), 1--62.
\bibitem{PT1}P.~Frejlich, I.~M\u{a}rcu\cb{t}, \emph{The normal form theorem around Poisson transversals}, Pacific Journal of Mathematics, vol.\ 287, no.\ 2, (2017), 371--391.
\bibitem{PT1.5}P.~Frejlich, I.~M\u{a}rcu\cb{t}, \emph{Normal forms for Poisson maps and symplectic groupoids around Poisson transversals}, Lett. Math. Phys (2017)

\url{https://doi.org/10.1007/s11005-017-1007-2}
\bibitem{Marco}M.~Gualtieri, \emph{Generalized complex geometry}, Ann. of Math. (2), 174 no. 1 (2011), 75--123.
\bibitem{GinzburgLu}V.~L.~Ginzburg, J.~-H.~Lu, \emph{Poisson cohomology of Morita-equivalent Poisson manifolds}, International Mathematics Research Notices (1992), no. 10, 199--205.
\bibitem{Ginzburg}V.~L.~Ginzburg, \emph{Grothendieck Groups of Poisson Vector Bundles}, J. Symplectic Geom. Volume 1, Number 1 (2001), 121--169.
\bibitem{Karasev}M.~V.~Karas\"ev, \emph{Analogues of objects of the theory of Lie groups for nonlinear Poisson brackets}, Izv. Akad. Nauk SSSR Ser. Mat. 50 (1986), no. 3, 508--538.
\bibitem{WZW}C.~Klim\v{c}\'{i}k, T.~Str\"obl, \emph{WZW-Poisson manifolds}, Journal of Geometry and Physics 43.4 (2002), 341--344.
\bibitem{Landsman}N.~Landsman, \emph{Bicategories of operator algebras and Poisson manifolds}, Mathematical physics in mathematics and physics (Siena, 2000), vol. 30 in Fields Inst. Commun., 271--286. Amer. Math. Soc., Providence, RI, 2001.
\bibitem{Libermann}P.~Libermann, \emph{Probl\`{e}mes d'\'{e}quivalence et g\'{e}om\'{e}trie symplectique}, Ast\'{e}risque 107-108 (1983), 43--68.
\bibitem{Eckhard} E.~Meinrenken, \emph{Private communication}.
\bibitem{Eckhardnotes} E.~Meinrenken, \emph{Poisson Geometry from a Dirac perspective}, Lett. Math. Phys. (2017), 

\url{https://doi.org/10.1007/s11005-017-0977-4}
\bibitem{Park}J.-S.~Park, \emph{Topological open $p$-branes}, Symplectic Geometry and Mirror Symmetry (2000, Seoul), World Sci. Publ., River Edge, NJ (2001), 311--384.
\bibitem{Rieffel}M.~Rieffel, \emph{Morita equivalence for $C^*$-algebras and $W^*$-algebras}, J. Pure Appl. Algebra 5 (1974), 51--96.
\bibitem{Dima}D.~Roytenberg, \emph{Courant  algebroids,  derived  brackets  and  even  symplectic  supermanifolds}, PhD thesis, UC Berkeley, 1999, \url{https://arxiv.org/abs/math/9910078}.
\bibitem{SeveraWeinstein}P.~\v{S}evera, A.~Weinstein, \emph{Poisson geometry with a $3$-form background}, Progress of Theoretical Physics Supplement 144 (2002), 145--154.
\bibitem{Stefan}P.~Stefan, \emph{Accessible sets, orbits, and foliations with singularities}, Proc. London Math. Soc. (3) 29 (1974), 699--713.
\bibitem{Wade}A.~Wade, \emph{Poisson fiber bundles and coupling Dirac structures}, Ann. Global Anal. Geom. 33, no. 3 (2008), 207--217.
\bibitem{Weinstein83}A.~Weinstein, \emph{The local structure of Poisson manifolds}, J. Diff. Geom. 18 (1983), 523--557.
\bibitem{Xu0}P.~Xu, \emph{Morita equivalent symplectic groupoids}, Symplectic geometry, groupoids, and integrable systems (Berkeley, CA, 1989), 291--311. Springer, New York, 1991.
\bibitem{Xu1}P.~Xu, \emph{Morita equivalence of Poisson manifolds}, Comm. Math. Phys. Volume 142, Number 3 (1991), 493--509.
\bibitem{Xu2}P.~Xu, \emph{Momentum Maps and Morita Equivalence}, J. Differential Geom. Volume 67, Number 2 (2004), 289--333.
\end{thebibliography}
\end{document}